\documentclass[11pt,a4paper]{amsart}
\usepackage{mathpazo}  
\usepackage{amsmath}
\usepackage{amssymb}

\usepackage{graphicx}
\usepackage[all]{xy}
\usepackage{latexsym}
\usepackage{mathrsfs}
\usepackage{fancyhdr}
\usepackage{enumerate}
\usepackage[english]{babel}
\usepackage{lipsum}
\usepackage{etoolbox}
\usepackage[colorlinks=true,linkcolor=Maroon, citecolor=magenta, pagebackref]{hyperref} 

\usepackage[usenames,dvipsnames]{xcolor}

\usepackage{tikz, tkz-euclide, float, pgfplots, overpic}

\newcommand*\circled[1]{\tikz[baseline=(char.base)]{
            \node[shape=circle,draw,inner sep=2pt] (char) {#1};}}

\usetikzlibrary{arrows,calc, decorations.markings, positioning, fadings}
\usetkzobj{all}
\tikzset{snake it/.style={decorate, decoration=snake}}

\newdimen\tlx
\newdimen\tlx
\newdimen\brx
\newdimen\bry


\usepackage[left=2.5cm, top=3cm, right=2.5cm, bottom=3cm, bindingoffset=0.5cm]{geometry}



\newcommand{\Z}{\ensuremath{\mathbb{Z}}}

\newcommand{\N}{\ensuremath{\mathbb{N}}}

\newcommand{\CC}{\ensuremath{\mathcal{C}}} 

\DeclareMathOperator{\Hom}{Hom}
\DeclareMathOperator{\Ext}{Ext}
\DeclareMathOperator{\End}{End}
\DeclareMathOperator{\add}{add}

\DeclareMathOperator{\ind}{ind} 

\newcommand{\img}{\operatorname{Im}\nolimits}
\newcommand{\modcat}{\operatorname{mod}\nolimits}

\newcommand{\rad}{\operatorname{{rad}}}
\newcommand{\soc}{\operatorname{{soc}}}

\newcommand{\supp}{\operatorname{supp}\nolimits}

\newcommand{\mc}[1]{\ensuremath{\mathcal{#1}}}   


\usepackage{collectbox}    

\makeatletter
\newcommand{\sqbox}{%
    \collectbox{%
        \@tempdima=\dimexpr\width-\totalheight\relax
        \ifdim\@tempdima<\z@
            \fbox{\hbox{\hspace{-.5\@tempdima}\BOXCONTENT\hspace{-.5\@tempdima}}}%
        \else
            \ht\collectedbox=\dimexpr\ht\collectedbox+.5\@tempdima\relax
            \dp\collectedbox=\dimexpr\dp\collectedbox+.5\@tempdima\relax
            \fbox{\BOXCONTENT}%
        \fi
    }%
}
\makeatother




\theoremstyle{theorem}
\newtheorem{theorem}{Theorem}[section]         
\newtheorem{lemma}[theorem]{Lemma}
\newtheorem{cor}[theorem]{Corollary}
   
\newtheorem{prop}[theorem]{Proposition}

\theoremstyle{remark}
\newtheorem{remark}[theorem]{Remark}
\newtheorem*{rem}{Remark}
\newtheorem{ex}[theorem]{Example}
\newtheorem{Construction}[theorem]{Construction}

\theoremstyle{definition}
\newtheorem{defi}[theorem]{Definition} 

\setcounter{section}{0}
\setcounter{tocdepth}{2}

\title{Mutation of friezes} 

\author[K.~Baur, E.~Faber, S.~Gratz, K.~Serhiyenko, G.~Todorov]{Karin Baur, Eleonore Faber, Sira Gratz, Khrystyna Serhiyenko, Gordana Todorov}

\address{Institut f\"{u}r Mathematik und Wissenschaftliches Rechnen, 
Universit\"{a}t Graz, NAWI Graz, Heinrichstrasse 36, 
A-8010 Graz, Austria}
\email{baurk@uni-graz.at}

\address{
Department of Mathematics, University of Michigan, Ann Arbor, MI 48109, USA
}
\email{emfaber@umich.edu}

\address{
Mathematical Institute, University of Oxford, Oxford, OX2 6GG, UK
}
\email{gratz@maths.ox.ac.uk}

\address{Department of Mathematics, University of California at Berkeley, Berkeley, CA 94720, USA}
\email{khrystyna.serhiyenko@berkeley.edu}

\address{Department of Mathematics, Northeastern University, Boston, MA 02115, USA}
\email{g.todorov@neu.edu}
%
%
%
%
%

\date{\today}

\begin{document}

\begin{abstract}
We study mutations of Conway-Coxeter friezes which are compatible with mutations of cluster-tilting objects in the associated cluster category of Dynkin type $A$. More precisely, we provide a formula, relying solely on the shape of the frieze, describing how each individual entry in the frieze changes under cluster mutation. We observe how the frieze can be divided into four distinct regions, relative to the entry at which we want to mutate, where any two entries in the same region obey the same mutation rule. Moreover, we provide a combinatorial formula for the number of submodules of a string module, and with that a simple way to compute the frieze associated to a fixed cluster-tilting object in a cluster category of Dynkin type $A$ in the sense of Caldero and Chapoton.
\end{abstract}

\keywords{
AR-quiver, 
cluster category, cluster mutation, cluster-tilted algebra, frieze pattern, Caldero-Chapoton map, 
string module.
}

\maketitle

\section{Introduction}\label{sec:intro}

Coxeter introduced friezes in \cite{Coxeter} in the early 1970's, inspired by Gauss's {\em pentagramma mirificum}. A {\em frieze} is a grid of positive integers, with a finite number of infinite rows, where the top and bottom rows are bi-infinite repetition of $0$s and the second to top and the second to bottom row are bi-infinite repetitions of $1$s

\[
 \xymatrix@=0.5em{ 
 &\ldots && 0 && 0 && 0 && 0 && \ldots &\\
   && 1 && 1 && 1 && 1 && 1 && \\
  &\ldots && m_{-1,-1} && m_{00} && m_{11} && m_{22} && \ldots& \\
   && m_{-2,-1} && m_{-1,0} && m_{01} && m_{12} && m_{23} && \\
   & \ldots && \ldots && \ldots && \ldots && \ldots && \ldots \\
    && 1 && 1 && 1 && 1 && 1 && \\
   &\ldots && 0 && 0 && 0 && 0 && \ldots &
  }
\]
satisfying the {\em frieze rule}: for every set of adjacent numbers arranged in a {\em diamond}
\[
 \xymatrix@=0.5em{
 & b & \\
 a && d \\
 & c &
 }
\]
we have
\[
 ad - bc = 1.
\]
Every frieze is uniquely determined by its quiddity sequence, that is the sequence $(m_{ii})_{i \in \Z}$ of integers in the third row 
(the first non-trivial row) of the frieze. Moreover, it is invariant under a glide reflection and thus 
periodic. 
The {\em order of the frieze} is one less than the number of rows. If the order 
is $n$, then the frieze is necessarily $n$-periodic. 

Coxeter conjectured that friezes are in bijection with triangulations of convex polygons -- a 
result that was proven shortly thereafter by Conway and Coxeter in \cite{CoCo1} and \cite{CoCo2}: 
Friezes of order $n$ 
are in bijection with triangulations of a convex $n$-gon. This provides a first link to cluster combinatorics of Dynkin type $A$. 
Indeed, the theory of friezes has gained fresh momentum in the last decade in 
relation to cluster theory, as observed by Caldero and Chapoton \cite{CalderoChapoton}.

Cluster algebras were introduced by Fomin and Zelevinsky in \cite{FZ1}. They are commutative rings with a certain combinatorial structure. The basic set-up of a cluster algebra is the following: We have a distinguished set of generators, called {\em cluster variables}, which can be grouped into overlapping sets of a fixed cardinality, called {\em clusters}. A process called {\em mutation} allows us to jump from one cluster to another by replacing one cluster variable by a unique other cluster variable. To construct a cluster algebra, we start with an {\em initial cluster} and some combinatorial rule, for our purposes encoded in a quiver $Q$ without loops or two-cycles, which determines how to mutate clusters. By iterated mutation of our initial cluster we then obtain every cluster variable of our cluster algebra after finitely many steps.

A particularly well-behaved example of cluster algebras are cluster algebras of Dynkin type $A$. They are of finite type, that is, there are only finitely many clusters. In fact, clusters in the cluster algebra of Dynkin type $A_{n-3}$ are in bijection with triangulations of a convex $n$-gon ($n\ge 3$). 
That is, we obtain a bijection between clusters in the cluster algebra of Dynkin type $A_{n-3}$ and friezes of order $n$.

The goal of this paper is to complete the picture of cluster combinatorics in the context of friezes. More precisely, we determine how mutation of a cluster affects the associated frieze, thus effectively introducing the notion of a mutation of friezes that is compatible with mutation in the associated cluster algebra. This provides a useful new tool to study cluster combinatorics of Dynkin type $A$. 

We approach this problem via (generalized) cluster categories. Cluster categories associated to finite dimensional hereditary algebras were introduced by Buan, Marsh, Reineke, Reiten and Todorov \cite{BMRRT} as certain orbit categories of the bounded derived category of the hereditary algebra; a generalized version for algebras of global dimension $2$ has been introduced by Amiot \cite{[Am]}. Cluster categories are certain triangulated categories which mirror the combinatorial behaviour of the associated cluster algebras. The role of cluster variables is taken on by the indecomposable objects, while clusters correspond to the so-called {\em cluster-tilting objects}. There exists a notion of mutation of cluster-tilting objects, which formally relies on the category's triangulated structure. Briefly put, mutation replaces an indecomposable summand of a cluster-tilting object by a unique other indecomposable object such that we again get a cluster-tilting object.

Caldero and Chapoton \cite{CalderoChapoton} have provided a formal link between cluster categories and cluster algebras by introducing what is now most commonly known as the {\em Caldero Chapoton map} or {\em cluster character}. Fixing a cluster-tilting object (which takes on the role of the initial cluster), it associates to each indecomposable in the cluster category a unique cluster variable in the associated cluster algebra, sending the indecomposable summands in the cluster-tilting object to the initial cluster. Postcomposing the Caldero Chapoton map with the specialization of all initial cluster variables to one gives rise to the {\em specialized Caldero Chapoton map}, whose values are positive integers. 

Let now $\CC$ be a cluster category of Dynkin type $A$, that is $\CC = D^b(kQ)/\tau^{-1}[1]$ where $Q$ is an orientation of a Dynkin diagram of type $A$, $\tau$ denotes the Auslander-Reiten translation and $[1]$ denotes the suspension in the bounded derived category $D^b(kQ)$. The specialized Caldero Chapoton map allows us to jump directly from the cluster category with a fixed cluster-tilting object $T$ to a frieze $F(T)$:  For each vertex of the Auslander-Reiten quiver of $\CC$ we pick a representant of the associated isomorphism class of indecomposable objects and label each vertex in the Auslander-Reiten quiver of $\CC$ by the image of this representant under the specialized Caldero Chapoton map. Completing accordingly with rows of $0$s and $1s$ at the top and bottom this yields a frieze, cf.\ \cite[Proposition~5.2]{CalderoChapoton}.

Fixing a cluster-tilting object $T$ in $\CC$, we consider the associated cluster-tilted algebra 
$B_T = \End(T)$. It has been shown by Buan, Marsh and Reiten that there is an equivalence of categories $\CC \big/ \add(T[1]) \cong\modcat (B_T)$. 
Each indecomposable object in $\CC$ thus either lies in $T[1]$ and can be viewed as the suspension of an indecomposable projective $B_T$-module, or it can be identified with a unique indecomposable $B_T$-module. The specialized Caldero Chapoton map sends each indecomposable summand of $T[1]$ to 1 and each indecomposable $B_T$-module $M$ to the sum, taken over dimension vectors of submodules of $M$, of the Euler-Poincar\'e characteristic of the Grassmannians of submodules of $M$ of a given dimension vector. Since in our setting all modules are string modules, all Grassmannians appearing in this sum are points. Hence, the specialized Caldero Chapoton map sends an indecomposable $B_T$-module to the number of its submodules. The frieze $F(T)$ associated to the cluster-tilting object $T$ thus has entries of $1$ in the positions of the vertices associated to indecomposable summands of $T[1]$ and all the other entries (that do not lie in the mandatory rows of $0$s and $1$s at the top and bottom) are given by the number of submodules of the indecomposable $B_T$-module sitting in the same position in the Auslander-Reiten quiver of $\CC$.

Understanding the Caldero Chapoton map in Dynkin type $A$ thus amounts to knowing the number of submodules of $B_T$-modules where $B_T$ is cluster-tilted algebra of Dynkin type $A$. In Theorem  \ref{Prop:explicitsubmodules}, our first main result, we provide a combinatorial formula for the number of submodules of any given indecomposable $B_T$-module: Each $B_T$-module is a string module and hence has a description in terms of the lengths of the individual legs. If $(k_1,\dots, k_m)$ are these lengths for an indecomposable $M$, then the number $s(M)$ of submodules of $M$ is given by 
\[
 s(M) = 1 + \sum_{j=0}^m\sum_{|I|=m-j}\prod_{i\in I}k_i 
\]
where the second sum runs over all admissible subsets $I$ of $\{1,\dots, m\}$ (for details see Section~\ref{S:Number of submodules}). 
This formula relies on the shape $(k_1, \ldots, k_m)$ of the module, which in turn can be directly read of from its position in the Auslander-Reiten quiver of the cluster category, and allows for a straight-forward combinatorial way to compute the number of submodules of any string module, and in particular, obtain the frieze associated to a given cluster-tilting object in $\CC$.
It has been brought to our attention that parallel to our work, \cite{CSch} established a formula for the number of submodules in the context of snake graphs and continued 
fractions. 

Assume now that our cluster-tilting object $T$ in $\CC$ is of the form $T = \bigoplus_{i=1}^n T_i$, where the $T_i$ are mutually non-isomorphic indecomposable objects. Mutating $T$ at $T_i$ for some $1 \leq i \leq n$ yields a new cluster tilting object $T' = T/T_i \oplus T'_i$, to which we can associate a new 
frieze $F(T')$. 
In terms of the frieze, we can think of this mutation as a mutation at an entry of value $1$, 
namely the one sitting in the position of the indecomposable object $T_i[1]$.

We describe how, using graphic calculus, we can obtain each entry of the frieze $F(T')$ independently and directly from the frieze $F(T)$, thus effectively introducing the concept of mutations of friezes at entries of value $1$ that do not lie in the second or second-to-last row of $1$s. Our second main result in this paper, Theorem \ref{thm01} provides an explicit formula of how each entry in the frieze $F(T)$ changes under mutation at the entry corresponding to $T_i$. We observe that each frieze can be divided into four separate regions, relative to the entry of value $1$ at which we want to mutate. Each of these regions gets affected differently by mutation. Theorem \ref{thm01} provides an explicit formula, relying solely on the shape of the frieze and the entry at which we mutate, that determines how each entry of the frieze individually changes under mutation.

{\footnotesize
\tableofcontents 
}

\section{Description of the modules}\label{S:Description of the modules}
In this section we describe the objects in the cluster categories associated to the quivers $Q$ of type $A_n$, which will include quivers mutation equivalent to $A_n$. 
More precisely, we describe the structure of the indecomposable modules depending on their position in the Auslander-Reiten quivers of the cluster categories.\\

First we review the definition and some basic properties of the cluster category $\mathcal C_Q$ associated to an acyclic quiver $Q$ \cite{BMRRT}, and after that we consider quivers $Q'$ which are mutation equivalent to the quivers of type $A_n$ and which may have nontrivial potential $W$. In that case we consider generalized cluster categories $\mathcal C_{(Q',W)}$, which are shown to be triangle equivalent to $\mathcal C_Q$ \cite{[Am]}. 

\subsection{Acyclic quivers: cluster categories and AR quivers} 
$\ $

Let $Q=(Q_0,Q_1)$ be a finite quiver, with vertices $Q_0=\{1,\dots,n\}$ and arrows $Q_1$. Recall that for any finite quiver $Q$ with no oriented cycles, the cluster category $\mathcal C_Q$ is defined as the orbit category $\mathcal D^b(kQ)/\tau^{-1}[1]$ where  $\mathcal D^b(kQ)$ is the associated derived category of bounded complexes, $[1]$ is the shift functor in the triangulated category $\mathcal D^b(kQ)$ and $\tau$ is the Auslander-Reiten translation functor. It is possible to choose representatives of the indecomposable objects in $\mathcal C_Q$ to be the indecomposable $kQ$-modules and shifts of the indecomposable projective $kQ$-modules, i.e. $\ind \mathcal C_Q$ can be viewed as $\ind (kQ) \cup \{P_i[1]\}_{i=1}^n$; here $P_i$ is the projective cover of the simple $S_i$, and  simple $S_i$ is the module/representation supported at the vertex $i$.\\

The Auslander-Reiten (AR) quiver for $\modcat (kQ)$ is defined as: vertices correspond to 
(isoclasses of) 
indecomposable objects,  and arrows between vertices correspond to irreducible 
map between the associated objects (in general there might be several arrows, however for the quivers mutation equivalent to $A_n$ there will be at most one arrow); the mesh relations correspond to 
AR sequences in $\modcat (kQ)$. We now recall one of 
the basic theorems about almost split sequences, which will be used in both the 
acyclic and mutation equivalent to acyclic case.

\begin{theorem} \cite{AR-artin} 
Let $\Lambda$ be an artin algebra. \\
(a) Let $P$ be an indecomposable projective, not injective $\Lambda$-module and let $P_{t}$ be the  indecomposable projective modules such that $P$ is isomorphic to a direct summand of $\rad P_{t}$. Then there exists an almost split sequence (AR sequence):
$$0\rightarrow P \rightarrow \tau^{-1}(\rad P) \oplus (\oplus P_{t}) 
\rightarrow \tau^{-1}P\rightarrow0.$$
(b) Consider the  AR sequence,  where $B, C$ have no injective summands and $I$ is injective: 
$0\rightarrow A \rightarrow B\oplus I\rightarrow C\rightarrow0. \ \text{Then}$
 $0\rightarrow \tau^{-1}A \rightarrow \tau^{-1}B\oplus (\oplus P_t)\rightarrow \tau^{-1}C\rightarrow0$   
is an almost split sequence where the $P_t$ are the 
indecomposable projectives such that $\tau^{-1}A$ is isomorphic to a direct summand of $\rad P_t$.
\end{theorem}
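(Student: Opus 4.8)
The plan is to take the \emph{existence} of the sequences as given — this is the classical Auslander--Reiten existence and uniqueness theorem, which produces a unique almost split sequence starting at any non-injective indecomposable and ending at any non-projective indecomposable — and to reduce both parts to an \emph{identification of the middle term}. The structural fact I would exploit is that the middle term of an almost split sequence is determined, up to isomorphism, by the indecomposables $Y$ admitting an irreducible morphism from its source (equivalently, to its sink), the multiplicity of each such $Y$ being recorded by the symmetric valuation of the corresponding arrow in the AR quiver. Thus in each case it suffices to enumerate the indecomposable $Y$ with an irreducible morphism out of the source and to read off multiplicities. Two standard lemmas do the bookkeeping: \textbf{(i)} for an indecomposable projective $P$ the inclusion $\rad P \hookrightarrow P$ is minimal right almost split, so the indecomposables admitting an irreducible morphism \emph{into} $P$ are exactly the indecomposable summands of $\rad P$ (dually $I \twoheadrightarrow I/\soc I$ is minimal left almost split for indecomposable injective $I$); and \textbf{(ii)} for $Y$ non-projective indecomposable there is an irreducible morphism $X \to Y$ if and only if there is one $\tau Y \to X$, with matching valuation. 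I would also use that $\tau$ restricts to a bijection from non-projective to non-injective indecomposables, so $\tau Y$ is never injective and $\tau^{-1}$ annihilates injectives.

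For (a) the source is $P$, projective and non-injective, so the sequence $0 \to P \to E \to \tau^{-1}P \to 0$ exists; I would classify the irreducible morphisms $P \to Y$. If $Y$ is non-projective, (ii) turns these into irreducible morphisms $\tau Y \to P$, which by (i) forces $\tau Y$ to be a summand of $\rad P$, i.e. $Y = \tau^{-1}(\text{summand of }\rad P)$; since injective summands of $\rad P$ are killed by $\tau^{-1}$, these contributions assemble to $\tau^{-1}(\rad P)$. If instead $Y$ is projective, an irreducible morphism $P \to Y$ forces, by (i) applied to $Y$, that $P$ is a summand of $\rad Y$, giving exactly the summands $\oplus P_t$. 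Matching valuations then yields $E \cong \tau^{-1}(\rad P) \oplus (\oplus P_t)$.

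Part (b) is the same computation with a non-projective source. Here the source is $C := \tau^{-1}A$, and the given sequence $0 \to A \to B \oplus I \to C \to 0$ is precisely the almost split sequence ending at $C$, so by (i)--(ii) the indecomposables mapping irreducibly into $C$ are the summands of $B \oplus I$. Classifying irreducible morphisms $C \to Y$: for $Y$ non-projective, (ii) gives $\tau Y$ a summand of $B \oplus I$, but $\tau Y$ is never injective, so the injective part $I$ contributes nothing and only the summands of $B$ survive, yielding $\tau^{-1}B$; for $Y$ projective, $C = \tau^{-1}A$ must be a summand of $\rad Y$, producing the $\oplus P_t$ with $\tau^{-1}A$ a summand of $\rad P_t$. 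Hence the middle term of the almost split sequence starting at $\tau^{-1}A$ is $\tau^{-1}B \oplus (\oplus P_t)$, as asserted; the argument transparently explains the two boundary effects, namely that the injective summand $I$ is lost under $\tau^{-1}$ and is compensated by the freshly appearing projectives $P_t$.

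The main obstacle I anticipate is not conceptual but bookkeeping: keeping the valued multiplicities correct over a general artin algebra, where $\End$ modulo its radical is a division ring and each AR-quiver arrow carries a pair of valuations. The decomposition of the middle term as a \emph{set} of summands is immediate from (i)--(ii), but verifying that the multiplicities agree on both sides requires the symmetry of the arrow valuations together with minimality of the almost split maps; this is the step where I would be most careful. In the Dynkin type $A$ setting relevant to the remainder of the paper all valuations are trivial and every multiplicity is $1$, so the case analysis above closes the argument directly.
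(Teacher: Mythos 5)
Note first that the paper contains no proof of this theorem: it is quoted from \cite{AR-artin} and used as a black box, so your argument has to stand on its own rather than be measured against an internal proof. In substance it does stand. Your route --- existence of the almost split sequence with prescribed non-injective source, identification of the indecomposable summands of the middle term with the targets of irreducible maps out of the source, the lemma that $\rad P \hookrightarrow P$ is minimal right almost split for $P$ indecomposable projective, and the correspondence between irreducible maps $X \to Y$ ($Y$ non-projective) and irreducible maps $\tau Y \to X$ --- is the standard derivation, and it correctly identifies the isomorphism classes of indecomposable summands of the middle term in both (a) and (b), including the two boundary phenomena (injective summands disappearing under $\tau^{-1}$, new projectives $P_t$ appearing).

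The one genuine gap sits exactly at the step you flagged, and the fix you propose does not exist: arrow valuations in the AR quiver of an artin algebra are \emph{not} symmetric. What is true is the translation-quiver identity that the arrow $\tau Y \to X$ carries the transpose of the valuation of $X \to Y$; that identity is what makes the multiplicities of the $\tau^{-1}(\rad P)$-part in (a), and of the $\tau^{-1}B$-part in (b), come out right. For the freshly appearing projective summands there is no compensating identity: the multiplicity of $P_t$ in the middle term is one component of the valuation of the arrow $P \to P_t$ (the dimension of the bimodule of irreducible maps over $\End(P_t)$ modulo its radical), whereas the multiplicity of $P$ as a summand of $\rad P_t$ is the other component, and these can differ. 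A concrete failure: for the hereditary artin $\mathbb{R}$-algebra $\Lambda=\left(\begin{smallmatrix}\mathbb{R} & 0\\ \mathbb{C} & \mathbb{C}\end{smallmatrix}\right)$, of valued type $B_2$, the almost split sequence starting at the simple projective $P_2$ is $0 \to P_2 \to P_1 \oplus P_1 \to \tau^{-1}P_2 \to 0$, although $P_2$ occurs exactly once as a direct summand of $\rad P_1$; so the middle term is not $\tau^{-1}(\rad P_2)\oplus P_1$. To be fair, this defect is inherited from the statement as quoted, which suppresses multiplicities altogether, and it is vacuous everywhere the paper applies the theorem, since cluster-tilted algebras of type $A_n$ over a field have trivially valued AR quivers with all multiplicities equal to $1$; but a proof claimed for general artin algebras must either carry the valuation components explicitly or assume them trivial.
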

As a special case, when $Q$ is acyclic quiver, and hence $kQ$ is hereditary algebra, we have a more precise description as follows.
If $P_i$ is projective, not injective, then there is an almost split sequence (AR sequence):
$$0\rightarrow P_i \rightarrow \tau^{-1}(\oplus_{t\in a(i)} P_t) \oplus (\oplus_{t'\in a'(i)} P_{t'}) \rightarrow \tau^{-1}P_i\rightarrow0,$$
where $a(i):=\{t\in Q_0\ |\ \exists (t\leftarrow i)\in Q_1\} \ \text{ and }\ a'(i):=\{t'\in Q_0\ |\ \exists (i\leftarrow t')\in Q_1\}.$\\

 The AR quiver for the cluster category $\mathcal C_Q$ is very closely related to the AR quiver of the module category $\modcat (kQ)$ in the following sense: all AR sequences of $kQ$-modules are still AR triangles in the cluster category $\mathcal C_Q$.
 The only new objects are $\{P_i[1]\}_{i=1}^n$ and the new AR triangles are the following. 
For each $i\in Q_0$ let 
$$I_i \rightarrow (\oplus_{t\in a(i)}  P_t[1])\oplus (\oplus_{t'\in a'(i)}  I_{t'})\rightarrow P_i[1] \rightarrow \ \ \ \ \ \ \ \ \ \ \ (*)$$
$$ P_i[1] \rightarrow (\oplus_{t\in a(i)}  P_t) \oplus (\oplus_{t'\in a'(i)}  P_{t'}[1]) \rightarrow P_i \rightarrow.\ \ \ \ \ \ \ \ \ \ \ (**)$$
\sloppy The triangles (*) are the connecting triangles in the derived category $D^b(kQ)$ between $\ind (kQ)$ and $\ind(kQ)[1]$, while the triangles (**) are the new triangles which appear in the orbit category. (The modules $I_i$ are injective envelopes of the simple modules $S_i$.) 

\subsection{Quivers with potential which are mutation equivalent to $A_n$}
$\ $

Cluster mutations of acyclic quivers, in general, do not produce another acyclic quiver, but instead quivers with potential are obtained. We will now describe the AR quiver of the cluster category of quivers with potential which are obtained by mutations of quivers of type $A_n$.\\

{\bf Generalized cluster categories $\mathcal C_{(Q,W)}$:} Given a quiver $Q$ with potential $W$, the generalized cluster category $\mathcal C_{(Q,W)}$ is defined as the quotient category $per \Gamma/D^b(\Gamma)$ where $\Gamma$ is the associated Ginzburg algebra which is a differential graded  algebra, $per \Gamma$ is the category of complexes of projective $\Gamma$-modules, and $D^b(\Gamma)$ is the category of differential graded $\Gamma$-modules with finite cohomology. While this description of this category is somewhat complicated, there is a beautiful theorem of Amiot, about the triangulated equivalence.

\begin{theorem} \cite{[Am]} 
Let $Q$ be an acyclic quiver. Let $(\mu Q,W)$ be the quiver with potential obtained after mutation $\mu$ of $Q$. Then the generalized cluster category $\mathcal C_{(\mu Q, W)}$ is triangle equivalent to the cluster category $\mathcal C_Q$.
\end{theorem}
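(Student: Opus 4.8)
The plan is to reduce the statement to two independent ingredients: an identification of the generalized cluster category in the hereditary base case, and the invariance of the generalized cluster category under a single mutation of the quiver with potential.

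First I would treat the base case $W=0$. For an acyclic quiver $Q$ the path algebra $kQ$ is hereditary, hence of global dimension $1\le 2$, and the associated Ginzburg dg algebra $\Gamma_{(Q,0)}$ is the $3$-Calabi--Yau (Ginzburg) completion of $kQ$. Amiot's construction \cite{[Am]} then identifies the Verdier quotient $\operatorname{per}\Gamma_{(Q,0)}/D^b(\Gamma_{(Q,0)})$ with the orbit category $D^b(kQ)/\tau^{-1}[1]$, that is, with the classical cluster category $\mathcal{C}_Q$ of Buan--Marsh--Reineke--Reiten--Todorov. Concretely, one checks that a fundamental domain for the orbit construction matches the image of the indecomposables of $D^b(kQ)$ under Amiot's projection functor, and that the shift-by-one of the Serre functor supplies the $2$-Calabi--Yau property, so that the two triangulated categories coincide. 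This establishes $\mathcal{C}_{(Q,0)}\cong\mathcal{C}_Q$.

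The key step is the invariance under mutation. Writing $\mu=\mu_{i_r}\cdots\mu_{i_1}$ as a composition of vertex mutations, it suffices to prove that for any quiver with potential $(Q',W')$ with finite-dimensional Jacobian algebra, a single mutation at a vertex $i$ yields a triangle equivalence $\mathcal{C}_{(Q',W')}\cong\mathcal{C}_{(\mu_i Q',\mu_i W')}$. Here I would invoke the derived equivalences of Keller and Yang: mutation of a quiver with potential at $i$ produces an explicit tilting dg bimodule over the two Ginzburg dg algebras, inducing a triangle equivalence $\operatorname{per}\Gamma_{(Q',W')}\xrightarrow{\sim}\operatorname{per}\Gamma_{(\mu_i Q',\mu_i W')}$. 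The crucial compatibility is that this functor restricts to an equivalence on the subcategories $D^b(\Gamma)$ of dg modules with finite-dimensional total cohomology; granting this, the equivalence descends to the Verdier quotients and gives the desired triangle equivalence of generalized cluster categories. Chaining these single-step equivalences along the mutation sequence and composing with the base case yields $\mathcal{C}_{(\mu Q,W)}\cong\mathcal{C}_{(Q,0)}\cong\mathcal{C}_Q$.

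The main obstacle lies entirely in the second step: verifying that the Keller--Yang tilting bimodule is genuinely a two-sided tilting complex and that the induced equivalence preserves $D^b(\Gamma)$ so that it passes to the quotient. Along the way one must also ensure that every intermediate Jacobian algebra remains finite-dimensional, so that each generalized cluster category is Hom-finite and $2$-Calabi--Yau and Amiot's construction applies at every stage of the iteration. For quivers mutation-equivalent to type $A$ this finiteness is automatic, which is precisely what allows the chain of equivalences to go through without further hypotheses.
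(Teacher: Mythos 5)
Your proposal is correct, but there is nothing in the paper to compare it with line by line: the paper does not prove this statement at all, it quotes it from \cite{[Am]}, so the relevant comparison is with Amiot's own argument, and that argument takes a genuinely different route from yours. Amiot does not chain derived equivalences along the mutation sequence; she proves a recognition theorem (an algebraic $2$-Calabi--Yau triangulated category with a cluster-tilting object $T$ such that $\End(T)$ is isomorphic to the Jacobian algebra of a Jacobi-finite quiver with potential $(Q,W)$ is triangle equivalent to $\mathcal{C}_{(Q,W)}$) and then applies it to $\mathcal{C}=\mathcal{C}_Q$ with $T$ the mutated cluster-tilting object, using the compatibility of mutation of cluster-tilting objects with DWZ-mutation of quivers with potential (due to Buan--Iyama--Reiten--Smith) to identify $\End_{\mathcal{C}_Q}(T)$ with $J(\mu Q,W)$. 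Your route --- the hereditary identification $\mathcal{C}_{(Q,0)}\simeq\mathcal{C}_Q$ plus a chain of Keller--Yang equivalences $\operatorname{per}\Gamma'\simeq\operatorname{per}\Gamma$ preserving $D^b$ and hence descending to the Verdier quotients --- is the other standard proof, and it is precisely the mechanism this paper itself invokes later: the proof of Proposition~\ref{psi} uses the Keller--Yang functor $\Psi_i$ and its restriction to $D^b$ to induce the functor $\psi_i$ between cluster categories. What your approach buys is an explicit triangle functor at each mutation step, i.e.\ the functoriality the paper actually needs; what Amiot's approach buys is that one never has to control a dg bimodule, only to recognize an endomorphism algebra. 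Two refinements to your write-up: DWZ-mutation includes the reduction step, so you must use that \cite{KY} also handle the reduced quiver with potential; and the Jacobi-finiteness you flag at the end is automatic for every acyclic $Q$, not just in type $A$, since $J(Q,0)=kQ$ is finite-dimensional and Jacobi-finiteness is invariant under mutation by \cite{DWZ1}, so your argument does yield the theorem in its stated generality.
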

This theorem tells us that the cluster category obtained after mutation still has the same AR quiver. 
However we need more precise description of the objects of the new cluster category. In order to do that we now recall the definition of mutation. \\

{\bf Mutations:} A mutation of a cluster-tilting object in the cluster category $\mathcal C_Q$ corresponds to the mutation of the quiver of the endomorphism algebra of the corresponding cluster-tilting objects. We recall that an object $T$ in the cluster category $\mathcal C_Q$ is called {\it cluster-tilting object} if 
$\Ext^1_{\mathcal C_Q}(T,T)=0$ and $T=\oplus_{j=1}^nT_j$, with $T_j$ indecomposable and pairwise 
non-isomorphic. Notice that $n=|Q_0|$.

\begin{defi} Let $T=\oplus_{j=1}^nT_j$ be a cluster-tilting object in a cluster category $\mathcal C_Q$. 
A {\it mutation of $T$ in direction $i$} is a new cluster-tilting object $\mu_iT= T/T_i\oplus T_i'$ where $T_i'$ is defined as the pseudokernel of the right $\add(T/T_i)$-approximation of $T_i$ or pseudocokernel of the left $\add(T/T_i)$-approximation of $T_i$: 
$$\rightarrow T_i'\rightarrow B\xrightarrow{f_i} T_i\rightarrow \ \ \ \ \  \text{ or } \ \ \ \ 
\rightarrow T_i\xrightarrow {f_i} B'\rightarrow T_i'\rightarrow.$$
\end{defi}
Cluster mutation can also be viewed as a mutation of quivers: for each cluster-tilting object $T$, let $B_T:= \End_{\mathcal C_Q}(T)$ and let $Q_T$ be the quiver of $B_T$. It follows from 
\cite[Theorem 5.1]{BIRS} that the quiver $Q_{\mu_i T}$ of 
$B_{\mu_iT}=\End_{\mathcal C_Q}(\mu_iT)$ can be obtained from the quiver $Q_T$ by applying DWZ-quiver mutation as in \cite{DWZ1}. 

\subsection{ AR quivers for $\mathcal C_{(Q,W)}$.} \label{sec:AR quiver}
$\ $

We now describe the AR quiver of the generalized cluster category $\mathcal C_{(Q, W)}$ 
which is obtained after a sequence of mutations. We need to use the following theorem. 

\begin{theorem} \cite{BMR} \label{T:KR} Let $\mathcal C_{(Q,W)}$ 
be a generalized cluster category. Let $T$ be a cluster-tilting object in $\mathcal C_{(Q,W)}$ and let $B_T:=\End_{\mathcal C_{(Q,W)}}(T)$. 
\begin{enumerate}
\item The functor $\Hom_{\mathcal C_{(Q,W)}}(T,-): \mathcal C_{(Q,W)} \to \modcat(B_T)$ induces an equivalence of categories $\mathcal C_{(Q,W)}/(\add T[1]) \cong \modcat(B_T)$, 
\item The kernel of $\Hom_{\mathcal C_{(Q,W)}}(T,-)$ is the subcategory $\add(T[1])\subset \mathcal C_{(Q,W)}$. 
\item The modules $\{\Hom_{\mathcal C_{(Q,W)}}(T,T_j)\}_{j=1}^n$ form a complete set of non-isomorphic indecomposable projective $B_T$-modules.
\end{enumerate}
\end{theorem}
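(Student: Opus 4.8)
The plan is to follow the general machinery that passes from a $2$-Calabi--Yau triangulated category equipped with a cluster-tilting object to the module category of its endomorphism algebra. Throughout write $F = \Hom_{\CC}(T,-)$ and $B_T = \End_{\CC}(T)$. The starting observation, which already disposes of part (3), is that the restriction $F|_{\add T}\colon \add T \to \operatorname{proj}(B_T)$ is an equivalence: by the very definition of $B_T$ one has $F(T) = B_T$ as a $B_T$-module, the evaluation functor $F$ is additive and fully faithful on $\add T$ via the identification $\Hom_{B_T}(F(T_i),F(T_j)) \cong \Hom_{\CC}(T_i,T_j)$, and it sends indecomposables to indecomposables. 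Since $T = \bigoplus_{j=1}^n T_j$, the images $F(T_j) = \Hom_{\CC}(T,T_j)$ are exactly the indecomposable summands of $B_T$, that is, a complete list of pairwise non-isomorphic indecomposable projective $B_T$-modules.

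The main tool for parts (1) and (2) is that, because $T$ is cluster-tilting (rigid and functorially finite) in the $2$-Calabi--Yau category $\CC$, every object $X$ sits in a triangle
$$T_1^X \xrightarrow{\ g\ } T_0^X \xrightarrow{\ f\ } X \xrightarrow{\ w\ } T_1^X[1]$$
with $T_0^X, T_1^X \in \add T$ and $f$ a right $\add T$-approximation. Applying $F$ and using $\Hom_{\CC}(T,T_1^X[1]) = \Ext^1_{\CC}(T,T_1^X) = 0$ produces a projective presentation $F(T_1^X) \to F(T_0^X) \to F(X) \to 0$ of $F(X)$. Conversely, starting from any $B_T$-module $M$ with projective presentation $Q_1 \to Q_0 \to M \to 0$, I lift the map $Q_1 \to Q_0$ through the equivalence of the previous paragraph to a morphism in $\add T$, take its cone, and obtain an object mapping to $M$ under $F$; this gives density. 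For the kernel in part (2), suppose $F(X) = 0$. Then in the presentation above $F(g)$ is an epimorphism of projectives, hence a split epimorphism, so $g$ is a split epimorphism by the $\add T$-equivalence; writing $T_1^X = T_0^X \oplus K$ with $K \in \add T$ forces $X \cong K[1] \in \add T[1]$. The reverse inclusion is immediate, since $F(T'[1]) = \Ext^1_{\CC}(T,T') = 0$ for $T' \in \add T$.

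It remains to show that $F$ induces a fully faithful functor on the quotient $\CC/\add(T[1])$. For faithfulness I must check that $F(\phi) = 0$ forces $\phi\colon X \to Y$ to factor through $\add T[1]$. Precomposing with the approximation $f\colon T_0^X \to X$ and using that $F$ is faithful on morphisms with source in $\add T$ gives $\phi f = 0$; the triangle above then factors $\phi$ through $w\colon X \to T_1^X[1] \in \add T[1]$. Fullness is the delicate point and the main obstacle: given a $B_T$-homomorphism $\alpha\colon F(X) \to F(Y)$, I lift it to a chain map between the chosen projective presentations, realize that chain map by honest morphisms in $\add T$ via the $\add T$-equivalence, and then propagate it to a morphism $X \to Y$ by repeated use of the octahedral axiom together with the vanishing $\Ext^1_{\CC}(T,\add T) = 0$; the $2$-Calabi--Yau duality $\Ext^1_{\CC}(X,Y) \cong D\,\Ext^1_{\CC}(Y,X)$ is what guarantees that the requisite squares can be completed. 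I expect the bookkeeping in this lifting step---checking that the lifted morphism is well defined modulo maps factoring through $\add T[1]$, and that $F$ of it recovers $\alpha$---to be the part demanding the most care, whereas density, the kernel computation, and part (3) are comparatively formal.
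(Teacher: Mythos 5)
The paper itself offers no proof of this statement: Theorem~\ref{T:KR} is quoted from \cite{BMR} as known background (it is the Buan--Marsh--Reiten/Keller--Reiten equivalence for $2$-Calabi--Yau categories with cluster-tilting objects) and is used as a black box, so there is no internal argument to compare yours against. Your sketch, writing $F=\Hom_{\CC}(T,-)$, follows the standard proof from that literature and its skeleton is sound: part (3) via the equivalence $\add T \simeq \operatorname{proj}(B_T)$, density and the kernel computation via the approximation triangle $T_1^X \to T_0^X \to X \to T_1^X[1]$, and faithfulness on the quotient via factoring a morphism killed by $F$ through the connecting map into $T_1^X[1]$. Two refinements are worth recording. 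First, the existence of that triangle with $T_1^X \in \add T$ uses more than ``rigid and functorially finite'': completing a right $\add T$-approximation $T_0^X \to X$ to a triangle and applying $\Hom_{\CC}(T,-)$ gives $\Ext^1_{\CC}(T,T_1^X)=0$, and one then needs the maximality clause in the definition of cluster-tilting (namely that $\Ext^1_{\CC}(T,Y)=0$ forces $Y\in\add T$) to conclude $T_1^X\in\add T$; rigidity alone does not suffice. Second, fullness is less delicate than you predict and needs neither the octahedral axiom nor the $2$-Calabi--Yau duality: lift $\alpha\colon F(X)\to F(Y)$ to a chain map between the induced projective presentations, transport it to a square in $\add T$ (which commutes in $\CC$ because $F$ is faithful on $\add T$), complete it to a morphism of triangles by axiom (TR3) to obtain $\phi\colon X\to Y$, and then $F(\phi)=\alpha$ because both are the map induced on cokernels by the same chain map, and such an induced map is unique. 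With these corrections your argument is exactly the standard one from the cited sources.
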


Using the above equivalence, the AR quiver of $\modcat B_T$ can be viewed as a full sub\-quiver of the AR quiver of $\mathcal C_{(Q,W)}$, however we need the following precise functorial correspondence between the cluster categories $\mathcal C_{(Q,W)}$ and $\mathcal C_{(Q',W')}$ which are related by 
a (sequence of) mutations. 
We write $P_i$ for the indecomposable projectives of $\mathcal C_{(Q,W)}$ and $P_i'$ 
for the ones of $\mathcal C_{(Q',W')}$.

\begin{prop} \label{psi} Let $\mathcal C_{(Q,W)}$ 
and $\mathcal C_{(Q',W')}$ be  generalized cluster categories 
where $(Q',W')$ is obtained after a sequence of DWZ-mutations. 
Let $T$ be the cluster-tilting object obtained from $\oplus_{j=1}^nP_j$ after the same sequence of cluster-tilting mutations.
Then there is a triangulated functor $\psi$ 
making the following commutative diagram:
\[
\xymatrix{
\mathcal C_{(Q,W)}\ar[rr]^{\Hom_{\mathcal C_{(Q,W)}}(T,-)}& &\modcat(B_T)\\
\mathcal C_{(Q',W')} \ar[urr]_{\phantom{aaaaaa}\Hom_{\mathcal C_{(Q',W')}} (\oplus_{j=1}^n P_j',- )}\ar@{..>}[u]^ {\psi}
       }
       \]
       
\end{prop}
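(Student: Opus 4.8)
The claim is that there exists a triangulated functor $\psi \colon \mathcal C_{(Q',W')} \to \mathcal C_{(Q,W)}$ making the triangle commute. Let me think about what this really asks for.

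We have two cluster categories related by a sequence of DWZ-mutations. By Amiot's theorem, both are triangle equivalent to the same base cluster category $\mathcal C_Q$. The functor $\psi$ should be some equivalence, and the commutativity condition ties together the two Hom-functors to module categories.

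Let me think carefully about what $T$ is. We start with $\oplus P_j$ in $\mathcal C_{(Q,W)}$ (the sum of indecomposable projectives, which is a cluster-tilting object). We apply a sequence of cluster-tilting mutations to get $T$ in $\mathcal C_{(Q,W)}$. Meanwhile, $(Q',W')$ is obtained by applying the corresponding DWZ-mutations. The key fact (from BIRS, Theorem 5.1, cited earlier) is that $B_T = \operatorname{End}_{\mathcal C_{(Q,W)}}(T)$ has quiver $Q_T = Q'$, and in fact $B_T$ should be the Jacobian algebra of $(Q',W')$.

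**The structure of the argument.**

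First I would invoke Amiot's theorem (Theorem stated in the excerpt) to get a triangle equivalence $\psi \colon \mathcal C_{(Q',W')} \xrightarrow{\sim} \mathcal C_{(Q,W)}$. The point is that both categories are triangle-equivalent to $\mathcal C_Q$, so they're equivalent to each other. But I need to choose $\psi$ correctly so that it sends the projectives correctly.

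The natural candidate: $\psi$ should send $\oplus_j P_j'$ (the projectives of $\mathcal C_{(Q',W')}$) to $T$ (the mutated cluster-tilting object in $\mathcal C_{(Q,W)}$). This is the crucial requirement. Here's why this should work: the cluster-tilting objects and their mutations correspond exactly to the quiver mutations. The object $\oplus P_j'$ is cluster-tilting in $\mathcal C_{(Q',W')}$ with endomorphism algebra the Jacobian algebra $J(Q',W')$. And $T$ is cluster-tilting in $\mathcal C_{(Q,W)}$ with $\operatorname{End}(T) = B_T$, whose quiver is $Q'$, so $B_T \cong J(Q',W')$ (this uses BIRS + the fact that the endomorphism algebra of a cluster-tilting object in a cluster category is the Jacobian algebra — a result of Keller-Reiten/Amiot).

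**The commutativity.** Once I have $\psi$ with $\psi(\oplus P_j') \cong T$, the commutativity of the triangle should follow essentially formally. Both composite functors $\operatorname{Hom}_{\mathcal C_{(Q,W)}}(T, \psi(-))$ and $\operatorname{Hom}_{\mathcal C_{(Q',W')}}(\oplus P_j', -)$ are functors $\mathcal C_{(Q',W')} \to \operatorname{mod}(B_T)$. Since $\psi$ is an equivalence sending $\oplus P_j'$ to $T$, and $\psi$ induces an iso $\operatorname{End}_{\mathcal C_{(Q',W')}}(\oplus P_j') \cong \operatorname{End}_{\mathcal C_{(Q,W)}}(T) = B_T$, we get
$$\operatorname{Hom}_{\mathcal C_{(Q,W)}}(T, \psi(X)) \cong \operatorname{Hom}_{\mathcal C_{(Q,W)}}(\psi(\oplus P_j'), \psi(X)) \cong \operatorname{Hom}_{\mathcal C_{(Q',W')}}(\oplus P_j', X),$$
where the first iso is the chosen isomorphism $T \cong \psi(\oplus P_j')$ and the second is fully-faithfulness of $\psi$. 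One needs to check this isomorphism is compatible with the $B_T$-module structures, i.e. it's an iso in $\operatorname{mod}(B_T)$, not just of vector spaces — but this is exactly where the identification $\operatorname{End}(\oplus P_j') \cong B_T$ via $\psi$ does the work.

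**The main obstacle.** The hard part is *constructing* $\psi$ with the right behavior on objects — specifically, guaranteeing that the Amiot equivalence (or some modification of it) sends $\oplus P_j'$ precisely to $T$ rather than to some other cluster-tilting object. Amiot's theorem gives *an* equivalence, but pinning down one that matches the cluster-tilting objects compatibly with the chosen mutation sequence requires tracking the mutation of cluster-tilting objects through the equivalence. The cleanest approach is probably to build $\psi$ by induction on the length of the mutation sequence: for a single mutation, one has Amiot's equivalence realized so that the canonical cluster-tilting object on the $(Q',W')$-side corresponds to the once-mutated cluster-tilting object on the $\mathcal C_{(Q,W)}$-side, and one composes these step by step. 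The subtlety is checking that the triangulated structure is preserved at each step and that the correspondence of cluster-tilting objects is natural enough to compose — this functoriality of Amiot's equivalence under a single mutation is the technical crux, and I would lean on the explicit description of mutation of cluster-tilting objects together with the uniqueness up to (triangulated) equivalence of the cluster category in Amiot's construction.
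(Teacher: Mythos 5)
Your reduction of the problem is sound, and the skeleton of your argument is the same as the paper's: prove the statement for a single DWZ-mutation and compose along the sequence, where the single-mutation case amounts to producing a triangulated functor $\psi_i\colon \mathcal C_{(Q',W')}\to\mathcal C_{(Q,W)}$ with $\psi_i(P_j')=P_j$ for $j\neq i$ and $\psi_i(P_i')=\mathrm{cone}(P_i\xrightarrow{f}\oplus P_t)$ for $f$ a minimal $\add(\oplus_{j\neq i}P_j)$-approximation; once such object-level control is available, the commutativity of the triangle (including compatibility of the $B_T$-module structures via the induced isomorphism of endomorphism algebras) follows formally, as you argue.

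However, the step you yourself flag as ``the technical crux'' is a genuine gap, not a deferrable detail, and the tools you propose cannot close it. Amiot's theorem gives abstract triangle equivalences $\mathcal C_{(Q',W')}\simeq\mathcal C_Q\simeq\mathcal C_{(Q,W)}$ (and even this phrasing needs an acyclic base, whereas in the proposition $(Q,W)$ is itself a general quiver with potential), but an abstract equivalence carries no information about where $\oplus_j P_j'$ lands; ``uniqueness of the cluster category up to triangulated equivalence'' is a statement about categories, not about the existence of an equivalence matching two prescribed cluster-tilting objects, so your induction has nothing concrete to run on. The ingredient the paper uses to fill exactly this hole is the Keller--Yang theorem \cite{KY}: for a single mutation there is an explicitly constructed triangulated functor $\Psi_i\colon per\,\Gamma'\to per\,\Gamma$ between the Ginzburg dg algebras satisfying $\Psi_i(\Gamma'e_j')=\Gamma e_j$ for $j\neq i$ and $\Psi_i(\Gamma'e_i')=\mathrm{cone}(\Gamma e_i\xrightarrow{F}\oplus\Gamma e_t)$, and $\Psi_i$ restricts to $D^b(\Gamma')\to D^b(\Gamma)$, hence descends to the quotient cluster categories with precisely the required images of the shifted projectives. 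Your proof becomes correct once this citation (or an equivalent recognition-type theorem with object-level control) replaces the appeal to uniqueness; as written, the central claim is asserted rather than proved.
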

\begin{proof} We first prove the statement for a single DWZ-mutation $\mu_i$. Let $(Q',W')=\mu_i(Q,W)$ and $T=\mu_i(\oplus_{j=1}^n P_j)$. 
We need a triangulated functor $\psi_i$ so that 
$$
\Hom_{\mathcal C_{(Q',W')}} (\oplus P_j',- )=\Hom_{\mathcal C_{(Q,W)}} (T,-)\cdot\psi_i,
$$
i.e. such that $\psi_i(P_j')=P_j$ for $j\neq i$ 
and $\psi_i(P'_i)=cone(P_i\xrightarrow{f}\oplus P_t)$ where $f$ is a minimal $\add(\oplus_{j\neq i}P_j)$-approximation of $P_i$. Such a triangulated functor exists as a consequence of the 
Keller--Yang theorem \cite{KY}. \
They consider  the corresponding Ginzburg algebras $\Gamma$ and $\Gamma'$ 
and prove existence of a triangulated functor 
$\Psi_i :per\,\Gamma' \to per\,\Gamma$ such that $\Psi_i(\Gamma'e'_j)=\Gamma e_j$ for 
$j\neq i$ and 
$\Psi_i(\Gamma'e'_i)=cone(\Gamma e_i\xrightarrow{F} \oplus \Gamma e_t)$ where 
$F$ is a minimal $\add(\oplus_{j\neq i}\Gamma e_j)$-approximation of $\Gamma e_i$. It is also 
shown that $\Psi_i$ restricts to a functor $D^b(\Gamma')\to D^b(\Gamma)$, inducing the desired 
functor $\psi_i$ between the cluster categories.

In order to get the general statement apply the above argument to the given sequence of mutations.
\end{proof}

The AR quiver for the cluster category $\mathcal C_{A_n}$ and any generalized cluster category  mutation equivalent to $A_n$ is of the following form: 
\[ 
\xymatrix@!C=0pt@!R=0pt{
&&&\ar[rd]&&a_{1,n}\ar[rd]&&a_{2,n}\ar[rd]&&a_{1,1}\ar[rd]&&\\
&&\ar@{.}[rd]&&a_{1,n-1}\ar@{.}[rd]\ar[ru]&&a_{2,n-1}\ar[ru]\ar@{.}[rd]&&a_{3,n-1}\ar[ru]\ar@{.}[rd]&&a_{1,2}\ar@{.}[rd]\ar[ru]&&\\
\dots &\ar@{.}[rd]&&\ar@{.}[ur]\ar@{.}[dr]&&\ar@{.}[ur]\ar@{.}[dr]&&\ar@{.}[ur]\ar@{.}[dr]&&\ar@{.}[ur]\ar@{.}[dr]&&\ar@{.}[dr]\ar@{.}[ur]&&&\dots&&&&\\
\ar[rd]&&a_{1,3}\ar@{.}[ur]\ar[dr]&&a_{2,3}\ar[dr]\ar@{.}[ur] &&a_{3,3}\ar@{.}[ru]&&a_{n-2,3}\ar[rd]\ar@{.}[ru]&&a_{n-1,3}\ar[rd]\ar@{.}[ru]&&a_{1,n-2}\ar[rd]\ar@{.}[ur]&&\\
&a_{1,2}\ar[ur]\ar[dr]&&a_{2,2}\ar[ur]\ar[dr]&&a_{3,2}\ar[ur]\ar[dr] &&\dots&&a_{n-1,2}\ar[rd]\ar[ru]&&a_{n,2}\ar[ru]\ar[rd]&&a_{1,n-1}\ar[rd]\ar[ur]\\
a_{1,1}\ar[ur]&&a_{2,1}\ar[ur]&&a_{3,1}\ar[ru]&&&&\ar[ru]&&a_{n,1}\ar[ur]&&a_{n+1,1}\ar[ru]&&a_{1,n} \ .
}  \]

\begin{remark} \label{remark} Some properties of the AR quiver for the cluster categories $\mathcal C_{(Q,W)}$:
\begin{enumerate}
\item All of the objects encompassed by $\{a_{1,1}, \ a_{1,n}, \ a_{2,n}, \ a_{n+1,1}\}$ are mutually non-isomorphic and form a fundamental domain. 
\item \sloppy Each maximal rectangle starting at the point $a_{1,t}$ is bounded by the corners $\{a_{1,t}, \ a_{1,n}, \ a_{t,1}, \ a_{t,n-t+1}\}$ and is contained entirely within the fundamental domain, hence all the points are distinct.
\item Each maximal rectangle starting at any point $a_{s,t}$ can be viewed as 
starting at $a_{1,t}$
by relabelling, and hence all the points within the rectangle are distinct.
\item All the points, within any rectangle starting at any point, are distinct.
\item Additional copies of the same points of the AR quiver are included in the above diagram, in order to be able to see and describe supports of the functors 
$\Hom_{\mathcal C_{(Q,W)}}(-,- )$ and $\Ext^1_{\mathcal C_{(Q,W)}}(-,- )$.
\end{enumerate}
\end{remark}

\subsection{Supports of Hom- and Ext- functors} 
$\ $

From this point on, $\mathcal C$ will denote a generalized cluster category 
$\mathcal C_{(Q,W)}$ of type $A_n$.

Even though it is known that all indecomposable modules over cluster-tilted algebras of type 
$A_n$ are string modules (\cite{ASCJP,BR})
we need to describe the precise shape of indecomposable modules depending on their relative 
position to the projectives in the AR quiver of $\mathcal C$. Since the indecomposable 
projective modules have no (non-trivial) 
extensions, the possible configurations of the projectives can be determined using the  supports of the functors $\Ext^1_{\mathcal C}(P,-)$ which we now describe. 

\begin{prop} \label{dimHom1}Let $\mathcal C$ be a generalized cluster category of type $A_n$. Let $X, Y$ be indecomposable objects in $\mathcal C$. Then:
 \begin{enumerate}
\item  $\dim_K\Hom_{\mathcal C}(X,Y)\in\{0,1\}$ 
\item  $\dim_K\Ext^1_{\mathcal C}(X,Y)\in\{0,1\}$.
\end{enumerate}
\end{prop}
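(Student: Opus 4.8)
The plan is to reduce to the hereditary model via Amiot's theorem. Since $\mathcal{C}$ is mutation equivalent to $A_n$, the quoted theorem of \cite{[Am]} gives a triangle equivalence $\mathcal{C}\simeq\mathcal{C}_{A_n}$ with $\mathcal{C}_{A_n}=D^b(kQ)/\tau^{-1}[1]$ for $Q$ an acyclic orientation of the $A_n$ diagram. A triangle equivalence preserves $\Hom$-spaces and commutes with the shift, hence also preserves $\Ext^1_{\mathcal{C}}(X,Y)=\Hom_{\mathcal{C}}(X,Y[1])$; so it suffices to prove both (1) and (2) inside $\mathcal{C}_{A_n}$. There I would take the indecomposables to be the indecomposable $kQ$-modules together with the shifted projectives $P_i[1]$, and compute morphisms through the orbit-category formula
\[
\Hom_{\mathcal{C}_{A_n}}(X,Y)\cong\bigoplus_{i\in\Z}\Hom_{D^b(kQ)}\bigl(X,(\tau^{-1}[1])^iY\bigr).
\]

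The module-level input comes next. As $kQ$ is hereditary of Dynkin type $A$, every indecomposable $kQ$-module is a thin (interval) module, so $\dim_K\Hom_{kQ}(M,N)\le 1$ and $\dim_K\Ext^1_{kQ}(M,N)\le 1$ for all indecomposables $M,N$, while $\Hom_{D^b(kQ)}(M,N[i])=\Ext^i_{kQ}(M,N)$ vanishes for $i\notin\{0,1\}$. Writing $(\tau^{-1}[1])^iY=\tau^{-i}Y[i]$ therefore collapses the orbit sum to the two terms $\Hom_{kQ}(X,Y)$ and $\Ext^1_{kQ}(X,\tau^{-1}Y)$, each of dimension at most one; the parallel reduction for extensions, via the $2$-Calabi--Yau duality, gives $\Ext^1_{\mathcal{C}}(X,Y)\cong\Ext^1_{kQ}(X,Y)\oplus D\Ext^1_{kQ}(Y,X)$ for modules $X,Y$.

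The crux, and the step I expect to be the main obstacle, is \emph{mutual exclusivity}: showing these two surviving summands are never simultaneously nonzero, so that the total dimension is bounded by $1$ and not $2$. For (2) this is the assertion that interval modules cannot satisfy $\Ext^1_{kQ}(X,Y)\ne 0$ and $\Ext^1_{kQ}(Y,X)\ne 0$ at once; for (1) it is the analogous incompatibility between $\Hom_{kQ}(X,Y)$ and the wrapped-around contribution $\Ext^1_{kQ}(X,\tau^{-1}Y)$. I would settle this using the explicit AR quiver drawn above: realising each indecomposable as an interval and describing the supports of $\Hom_{\mathcal{C}}(X,-)$ and $\Ext^1_{\mathcal{C}}(X,-)$ as the hammock regions emanating from $X$, two simultaneous nonzero contributions would force contradictory interval configurations. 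Equivalently, in the polygon model of $\mathcal{C}_{A_n}$ two diagonals meet in at most one interior point, which pins $\dim_K\Ext^1_{\mathcal{C}}(X,Y)\in\{0,1\}$ directly and, through the associated $\Hom$-hammock, yields the same bound for $\Hom$.

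Finally I would dispatch the remaining cases: when $X$ or $Y$ is a shifted projective $P_i[1]$, and when $X\cong Y$, where one must check $\End_{\mathcal{C}}(X)=K$ so that the identity does not combine with a second endomorphism to produce dimension $2$. These are handled by the connecting triangles $(*)$ and $(**)$ recorded earlier, which identify the relevant morphism spaces with module-level $\Hom$ or $\Ext^1$ groups already bounded by the thinness argument, together with the vanishing of self-extensions of an indecomposable in the cluster category.
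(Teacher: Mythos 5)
Your proposal is correct, but it takes a genuinely different route from the paper's proof. The paper never leaves the generalized cluster category: invoking finite representation type, it computes the whole dimension function $\dim_K\Hom_{\mathcal C}(X,-)$ by knitting along the AR-triangles (the exact sequences $(*)$ applied mesh by mesh), concludes that the support is exactly the maximal rectangle starting at $X$ filled with $1$'s --- the cap at $1$ coming from the distinctness of the points in that rectangle, Remark~\ref{remark}(3) --- and then obtains (2) from (1) by the duality $\Ext^1_{\mathcal C}(X,Y)\cong D\Hom_{\mathcal C}(\tau^{-1}Y,X)$. You instead reduce to the acyclic model $\mathcal C_{A_n}$ via Amiot's equivalence \cite{[Am]} and work there; your cleanest line runs in the opposite order to the paper's: bound $\Ext^1_{\mathcal C}$ first, via the polygon model of \cite{CalderoChapotonSchiffler} in which $\dim_K\Ext^1_{\mathcal C}(X,Y)$ is the number of crossings of the two corresponding diagonals, hence at most $1$, and then deduce the $\Hom$ bound from $\Hom_{\mathcal C}(X,Y)\cong\Ext^1_{\mathcal C}(X,\tau^{-1}Y)$, using $[1]\cong\tau$ in the orbit category. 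What your route buys is brevity and a conceptual explanation of the bound; the cost is that the essential content is outsourced to the Caldero--Chapoton--Schiffler equivalence. What the paper's computation buys is self-containedness and, more importantly, the explicit rectangle description of the Hom- and Ext-supports, which is precisely what Corollary~\ref{supports} and the whole analysis of projective configurations in Section~\ref{S:Description of the modules} are built on; the bare bound $\dim\le 1$ would not suffice for those later arguments. One caveat on your write-up: of your two proposed ways to resolve the ``mutual exclusivity'' problem (that $\Hom_{kQ}(X,Y)$ and $\Ext^1_{kQ}(X,\tau^{-1}Y)$ never survive simultaneously in the orbit sum), the first --- describing the supports of $\Hom_{\mathcal C}(X,-)$ and $\Ext^1_{\mathcal C}(X,-)$ as hammocks in the AR quiver --- is not really an independent argument, since establishing those hammocks rigorously amounts to redoing the paper's knitting computation; you should commit to the polygon-model argument, which is sound and genuinely closes that gap.
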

\begin{proof} (1) Since cluster-tilted algebras of type $A_n$ are all of finite representation type, all the morphisms are linear combinations of compositions of irreducible maps and dimensions of the homomorphism spaces can be obtained  from the AR quiver. We may let $a_{1,t}$ be the point corresponding to the object $X$.  Using the fact that 
$$0\xrightarrow{}\Hom_{\mathcal C}(X,A)\xrightarrow{}\Hom_{\mathcal C}(X,B)\xrightarrow{}\Hom_{\mathcal C}(X,C)\xrightarrow{}0 \ \ \ \ \ \ \ \ (*)$$
is exact  for each  indecomposable object $X\ncong C$ and each AR-triangle 
$$\xrightarrow{}A\xrightarrow{}B\xrightarrow{}C\xrightarrow{}A[1]\xrightarrow{}\ \ \ \ \ \ \ (**)$$  
it is possible to compute the dimensions of $\Hom_{\mathcal C}(X,Y)$ for all indecomposable $Y$.
The only non-zero homomorphisms, from the object at any point, are morphisms to the objects in the maximal rectangle starting at that point. This follows from (*) and the fact that in the AR triangles (**) the object $B$ is indecomposable if $A$ is on the edge of the AR quiver, 
and otherwise $B\cong B_1\oplus B_2$ with $B_1,B_2$ both indecomposable.

Since all the points in the rectangle are distinct by Remark \ref{remark} (3), they correspond to distinct objects and the $\dim_K\Hom_{\mathcal C}(X,Y)$ is a rectangle of 1's. For example if the object $X$ is at the point $a_{1,3}$ then the $\Hom_{\mathcal C}(X,- )$  has dimensions:

$$\xymatrix@!C=0pt@!R=0pt{
&0\ar@{.}[rd]&&0\ar@{.}[rd]&&1\ar[rd]&&0\ar@{.}[rd]&&0\ar@{.}[rd]&&0\ar@{.}[rd]&&1\ar[rd]&&0&&\\
0\ar@{.}[ur]\ar@{.}[rd]&&0\ar@{.}[rd]\ar@{.}[ur]&&1\ar[rd]\ar[ru]&&1\ar@{.}[ru]\ar[rd]&&0\ar@{.}[ru]\ar@{.}[rd]&&0\ar@{.}[rd]\ar@{.}[ru]&&1\ar[ru]\ar[rd]&&1\ar[rd]\ar@{.}[ru]&&&&\\
&0\ar@{.}[ur]\ar@{.}[rd]&&1\ar[ur]\ar[dr]&&1\ar[dr]\ar[ur] &&1\ar@{.}[ru]\ar@{.}[rd]&&0\ar@{.}[rd]\ar@{.}[ru]&&\boxed{1_X}\ar[rd]\ar[ru]&&1\ar[rd]\ar[ur]&&1&&\\
0\ar@{.}[ur]\ar@{.}[rd]&&1\ar[ur]\ar@{.}[dr]&&1\ar[ur]\ar[dr]&&1\ar[ur]\ar@{.}[dr]&&0\ar@{.}[ur]\ar@{.}[dr]&&0\ar@{.}[dr]\ar@{.}[ur]\ar@{.}[rd]&&1\ar[rd]\ar[ru]&&1\ar[rd]\ar[ru]&&&\\
&1\ar[rd]\ar[ru]&&1\ar[ru]\ar[rd]&&1\ar[ru]\ar@{.}[rd]&&0\ar@{.}[ur]\ar@{.}[rd]&&0\ar@{.}[ru]\ar@{.}[rd]&&0\ar@{.}[ru]\ar@{.}[rd]&&\ar[ru]\ar[rd]1&&1&&&&&&&&\\
\boxed{1_X}\ar[ur]\ar[dr]&&1\ar[dr]\ar[ur] &&1\ar[ru]\ar@{.}[rd]&&0\ar@{.}[rd]\ar@{.}[ru]&&0\ar@{.}[rd]\ar@{.}[ru]&&0\ar@{.}[rd]\ar@{.}[ur]&&0\ar@{.}[ur]\ar@{.}[rd]&&1\ar[rd]\ar[ur]&&&\\
&1\ar[ur]\ar[dr]&&1\ar[ur]\ar@{.}[dr] &&0\ar@{.}[rd]\ar@{.}[ru]&&0\ar@{.}[rd]\ar@{.}[ru]&&0\ar@{.}[ru]\ar@{.}[rd]&&0\ar@{.}[rd]\ar@{.}[ur]&&0\ar@{.}[ur]\ar@{.}[rd]&&1\\
0\ar@{.}[ur]&&1\ar[ru]&&0\ar@{.}[ur]&&0\ar@{.}[ru]&&0\ar@{.}[ur]&&0\ar@{.}[ru]&&\ar@{.}[ur]0&&0\ar@{.}[ur]&&.
}$$

\centerline{$\dim_K \Hom_{\mathcal C}(X,- )$}

\sloppy The point corresponding to the object $X$ is indicated with $\boxed{1_X}$ and it stands 
for $\dim_K\Hom_{\mathcal C}(X,X)=1$. 
The other point labeled by $\boxed{1_X}$ 
corresponds to the same object $X$ but it appears again since a covering of the AR 
quiver is drawn.

(2) In order to prove the statement that $\dim_K\Ext^1_{\mathcal C}(X,Y)\in \{0,1\}$, 
it is enough to use the isomorphism 
$\Ext^1_{\mathcal C}(X,Y)\cong D\Hom_{\mathcal C}(\tau^{-1}Y,X)$.  
\end{proof}

\begin{cor} \label{supports} Let $\mathcal C$ be a generalized cluster category of type $A_n$. 
Let $X$ be an indecomposable object in $\mathcal C$. Then:
\begin{enumerate}
\item The support of $\Hom_{\mathcal C}(X,- )$ consist of the objects corresponding to the points 
in the maximal rectangle starting at the point corresponding to $X$. Furthermore, the last point in 
the support of   $\Hom_{\mathcal C}(X,-)$ corresponds to $\tau^2X$.
\item 
The support of $\Hom_{\mathcal C}(-,X)$ consist of the objects corresponding to the points 
in the maximal rectangle ending at the point corresponding to $X$.  Furthermore, the first point 
in the support of   $\Hom_{\mathcal C}(-,X)$ corresponds to $\tau^{-2}X$.
\item 
The support of $\Ext^1_{\mathcal C}(- ,X)$ consist of the objects corresponding to the points in 
the maximal rectangle starting at the point corresponding to $\tau^{-1}X$.  Furthermore, the last 
point in the support of $\Ext^1_{\mathcal C}(- ,X)$ corresponds to $\tau X$.
\item 
The support of $\Ext^1_{\mathcal C}(X,-)$ consist of the objects corresponding to the points in 
the maximal rectangle ending at the point corresponding to $\tau X$. Furthermore, the first point 
in the support of   $\Ext^1_{\mathcal C}(X,-)$ corresponds to $\tau^{-1} X$.
\item 
The support of $\Ext^1_{\mathcal C}(- ,X)$ is the same as the support of 
$\Ext^1_{\mathcal C}(X,-)$ .
\end{enumerate}
\end{cor}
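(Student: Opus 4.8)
The plan is to prove part~(1) essentially for free from the preceding proof, pin down the extreme corner by a direct computation in the AR quiver, and then deduce (2)--(5) formally. For (1), the description of the support is nothing but what was established in the proof of Proposition~\ref{dimHom1}(1): the nonzero morphisms out of $X$ are exactly those into the maximal rectangle starting at the point of $X$, and since its points are pairwise distinct by Remark~\ref{remark}(3), the support is precisely this rectangle. The only new content is the identification of the sink corner. After relabelling I may assume $X=a_{1,t}$, so by Remark~\ref{remark}(2) the rectangle has corners $\{a_{1,t},\,a_{1,n},\,a_{t,1},\,a_{t,n-t+1}\}$ and the corner opposite $X$ is $a_{t,n-t+1}$. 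I would then check $a_{t,n-t+1}=\tau^2 X$ directly: in the interior of the AR quiver $\tau$ is the horizontal shift $a_{i,j}\mapsto a_{i-1,j}$ read off from the mesh, and applying it twice to $a_{1,t}$ and re-expressing the result in the normal form of the fundamental domain (Remark~\ref{remark}(1)) via the boundary glide reflection yields $a_{t,n-t+1}$.

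Part~(2) is the formal dual of (1): reversing all arrows of the AR quiver (passing to $\mathcal C^{\mathrm{op}}$) turns $\Hom_{\mathcal C}(X,-)$ into $\Hom_{\mathcal C}(-,X)$, interchanges ``rectangle starting at'' with ``rectangle ending at'', and replaces $\tau$ by $\tau^{-1}$; hence the support is the maximal rectangle ending at $X$, with source corner $\tau^{-2}X$.

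For (3) and (4) I would feed (1) and (2) into the isomorphism $\Ext^1_{\mathcal C}(A,B)\cong D\Hom_{\mathcal C}(\tau^{-1}B,A)$ used in the proof of Proposition~\ref{dimHom1}(2). For (4): $\Ext^1_{\mathcal C}(X,Y)\neq 0$ iff $\Hom_{\mathcal C}(\tau^{-1}Y,X)\neq 0$ iff $\tau^{-1}Y$ lies in the rectangle ending at $X$ (part~(2)); applying $\tau$ this says $Y$ lies in the rectangle ending at $\tau X$, whose source corner is $\tau(\tau^{-2}X)=\tau^{-1}X$. For (3): $\Ext^1_{\mathcal C}(Y,X)\cong D\Hom_{\mathcal C}(\tau^{-1}X,Y)$ is nonzero iff $Y$ lies in the rectangle starting at $\tau^{-1}X$ (part~(1)), whose sink corner is $\tau^2(\tau^{-1}X)=\tau X$. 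Finally (5) follows by comparing (3) and (4): both supports are the one maximal rectangle with source corner $\tau^{-1}X$ and sink corner $\tau X$, merely read from opposite corners, so they coincide; alternatively this is the $2$-Calabi--Yau symmetry $\Ext^1_{\mathcal C}(X,Y)\cong D\Ext^1_{\mathcal C}(Y,X)$.

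The main obstacle is the corner identification $a_{t,n-t+1}=\tau^2 a_{1,t}$ (and its dual): in the interior $\tau$ is a mere shift to the left, yet the sink corner sits on the far right of the fundamental domain, so $\tau^2 X$ becomes visible there only after carefully applying the glide-reflection identification built into the labelling. Once this single computation is carried out, parts~(2)--(5) are purely formal consequences of duality and the Serre-type isomorphism.
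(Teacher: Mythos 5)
Your proposal is correct and takes essentially the same route as the paper: the support descriptions rest on the rectangle statement established in the proof of Proposition \ref{dimHom1}(1), and the Ext-statements on the Serre-duality isomorphism $\Ext^1_{\mathcal C}(X,Y)\cong D\Hom_{\mathcal C}(\tau^{-1}Y,X)$, which is exactly what the paper uses as well (it simply reads items (1)--(5), including the corner identifications, off the two dimension diagrams drawn on the covering of the AR quiver). Your only additions are the explicit glide-reflection computation $\tau^{2}a_{1,t}=a_{t,n-t+1}$ and the formal dualization/translation steps that the paper leaves to diagram inspection; both check out.
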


\begin{proof} (1) and (2)  follow from the above diagram of $\dim \Hom_{\mathcal C}(X,- )$ 
on the cover of AR quiver.
(3), (4) and (5) follow from the following diagram of the $\dim_K\Ext^1_{\mathcal C}(X,Y)$.
$$\xymatrix@!C=0pt@!R=0pt{
&0\ar@{.}[rd]&&0\ar@{.}[rd]&&0\ar@{.}[rd]&&1\ar[rd]&&0\ar@{.}[rd]&&0\ar@{.}[rd]&&0\ar@{.}[rd]&&1\ar[rd]&&0\\
0\ar@{.}[rd]\ar@{.}[ur]&&0\ar@{.}[ur]\ar@{.}[rd]&&0\ar@{.}[rd]\ar@{.}[ur]&&1\ar[rd]\ar[ru]&&1\ar@{.}[ru]\ar[rd]&&0\ar@{.}[ru]\ar@{.}[rd]&&0\ar@{.}[rd]\ar@{.}[ru]&&1\ar[ru]\ar[rd]&&1\ar[rd]\ar@{.}[ru]&&\\
&0\ar@{.}[ur]\ar@{.}[rd]&&0\ar@{.}[ur]\ar@{.}[rd]&&1\ar[ur]\ar[dr]&&1\ar[dr]\ar[ur] &&1\ar@{.}[ru]\ar@{.}[rd]&&\boxed{0_X}\ar@{.}[rd]\ar@{.}[ru]&&1\ar[rd]\ar[ru]&&1\ar[rd]\ar[ur]&&1\\
0\ar@{.}[ur]\ar@{.}[rd]&&0\ar@{.}[rd]\ar@{.}[ru]&&1\ar[ur]\ar@{.}[dr]&&1\ar[ur]\ar[dr]&&1\ar[ur]\ar@{.}[dr]&&0\ar@{.}[ur]\ar@{.}[dr]&&0\ar@{.}[dr]\ar@{.}[ur]\ar@{.}[rd]&&1\ar[rd]\ar[ru]&&1\ar[rd]\ar[ru]&&&\\
&0\ar@{.}[rd]\ar@{.}[ru]&&1\ar[rd]\ar[ru]&&1\ar[ru]\ar[rd]&&1\ar[ru]\ar@{.}[rd]&&0\ar@{.}[ur]\ar@{.}[rd]&&0\ar@{.}[ru]\ar@{.}[rd]&&0\ar@{.}[ru]\ar@{.}[rd]&&\ar[ru]\ar[rd]1&&1&&&&&&&&\\
\boxed{0_X}\ar@{.}[rd]\ar@{.}[ur]&&1\ar[ur]\ar[dr]&&1\ar[dr]\ar[ur] &&1\ar[ru]\ar@{.}[rd]&&0\ar@{.}[rd]\ar@{.}[ru]&&0\ar@{.}[rd]\ar@{.}[ru]&&0\ar@{.}[rd]\ar@{.}[ur]&&0\ar@{.}[ur]\ar@{.}[rd]&&1\ar[rd]\ar[ur]\\
&0\ar@{.}[ur]\ar@{.}[rd]&&1\ar[ur]\ar[dr]&&1\ar[ur]\ar@{.}[dr] &&0\ar@{.}[rd]\ar@{.}[ru]&&0\ar@{.}[rd]\ar@{.}[ru]&&0\ar@{.}[ru]\ar@{.}[rd]&&0\ar@{.}[rd]\ar@{.}[ur]&&0\ar@{.}[ur]\ar@{.}[rd]&&1\\
0\ar@{.}[ur]&&0\ar@{.}[ur]&&1\ar[ru]&&0\ar@{.}[ur]&&0\ar@{.}[ru]&&0\ar@{.}[ur]&&0\ar@{.}[ru]&&\ar@{.}[ur]0&&0\ar@{.}[ur].
}$$
\centerline{$\dim_K \Ext^1_{\mathcal C}(X,-)$}
\sloppy
The point corresponding to the object $X$ is indicated with  $\boxed{0_X}$ and it stands for
$\dim_K\Ext^1_{\mathcal C}(X,X)=0$. The other point labeled by $\boxed{0_X}$ 
corresponds to the same object $X$ but it appears again since a covering of the AR quiver is drawn.
\end{proof}

\subsection{Configurations and structure of projectives} 

$\ $

Since the indecomposable projective modules have no extensions, the possible configurations 
of the projectives can be determined using the supports of the functors $\Ext^1_{\mathcal C}(P,- )$, 
i.e. avoiding maximal rectangles starting at $\tau^{-1}P$, which are the same as maximal 
rectangles ending at $\tau P$ by Corollary \ref{supports} (3),(4),(5).

In order to describe modules using the AR quiver and configurations of projectives, we first recall 
some general facts for finite dimensional algebras.

\begin{remark} \label{simples} Let $\Lambda$ be a finite dimensional $K$-algebra. 
The multiplicity of the simple $S_i$ as composition factor of a module $M$ is equal to 
the length of $\Hom_{\Lambda}(P_i, M)$ as an $\End(P_i)^{op}$-module, 
which in our case is equal to $\dim_K\Hom_{\Lambda}(P_i,M)$ 
since $\End(P_i)^{op}\cong K$ for all indecomposable projective modules $P_i$. 
\end{remark}

\begin{defi} Let $P_i, P_j$ be indecomposable projectives. A non-zero homomorphism 
$\rho:P_i\to P_j$ is called {\it projectively irreducible} if it is not an isomorphism and 
for any factorization $\rho = \beta\alpha$ with $\alpha:  P_i\to Q$ and $\beta: Q\to P_j$ where 
$Q$ is projective, one of the following holds: either $\alpha$ is split monomorphism or $\beta$ is 
split epimorphism.
\end{defi}

\begin{lemma} Let $\Lambda$ be a finite dimensional algebra and $\rho: P_i\to P_j$ a 
projectively irreducible map. Then 
$\img \rho\not\subset \rad^2P_j$.
\end{lemma}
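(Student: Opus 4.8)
The plan is to argue by contradiction: assume $\img\rho\subseteq\rad^2 P_j$ and manufacture a factorization $\rho=\beta\alpha$ through a projective object that violates the defining property of a projectively irreducible map. I would begin by passing to idempotents. Writing $P_i=\Lambda e_i$ and $P_j=\Lambda e_j$ for primitive idempotents, the standard isomorphism $\Hom_\Lambda(\Lambda e_i,\Lambda e_j)\cong e_i\Lambda e_j$ identifies $\rho$ with right multiplication by an element $x=e_i x e_j$, and then $\img\rho=\Lambda x$. Writing $J=\rad\Lambda$, so that $\rad^2 P_j=J^2 e_j$, the hypothesis $\Lambda x\subseteq J^2 e_j$ forces $x\in e_i J^2 e_j$.

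The key step is to turn this membership into an explicit factorization. Choosing a complete set of orthogonal primitive idempotents with $1=\sum_l e_l$, we have $J^2=\sum_l J e_l J$, so I can write $x=\sum_m u_m v_m$ with $u_m\in e_i J e_{l(m)}$ and $v_m\in e_{l(m)} J e_j$. Each $u_m$ and $v_m$ corresponds, under the identification above, to a genuine homomorphism between indecomposable projectives, namely right multiplication maps $P_i\to P_{l(m)}$ and $P_{l(m)}\to P_j$. Setting $Q=\bigoplus_m P_{l(m)}$, which is projective, and letting $\alpha\colon P_i\to Q$ have components given by right multiplication by the $u_m$ and $\beta\colon Q\to P_j$ by the $v_m$, a direct computation gives $\beta\alpha=\cdot\,(\sum_m u_m v_m)=\cdot\,x=\rho$.

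By projective irreducibility, either $\alpha$ is a split monomorphism or $\beta$ is a split epimorphism, and I would rule out both. If $\alpha$ is split with retraction $r\colon Q\to P_i$ having components $\cdot\,w_m$ with $w_m\in e_{l(m)}\Lambda e_i$, then $r\alpha=\mathrm{id}_{P_i}$ translates, under $\Hom_\Lambda(\Lambda e_i,\Lambda e_i)\cong e_i\Lambda e_i$, into $\sum_m u_m w_m=e_i$; but every $u_m\in J$, so the left-hand side lies in $J$, forcing the nonzero idempotent $e_i$ into the nilpotent ideal $J$, which is impossible. The case where $\beta$ is a split epimorphism is symmetric: a section with components $\cdot\,t_m$ yields $\sum_m t_m v_m=e_j\in J$, the same contradiction. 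Hence $\img\rho\not\subseteq\rad^2 P_j$.

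The main obstacle is the middle step, namely producing the factorization through a single projective out of the mere containment $\img\rho\subseteq\rad^2 P_j$. The decomposition $J^2=\sum_l J e_l J$ is exactly what lets me split $x$ into a sum of length-two paths through the intermediate projectives $P_{l(m)}$; once this is in place, the identification of maps with idempotent-sandwiched elements makes both the factorization and the final ``nonzero idempotent in the radical'' contradiction purely formal, using only that $J$ is nilpotent for a finite dimensional algebra.
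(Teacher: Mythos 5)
Your proof is correct, and it follows the same strategy as the paper's one-line proof: assuming $\img\rho\subseteq\rad^2P_j$, you produce a factorization of $\rho$ through an intermediate projective in which neither factor is split, contradicting projective irreducibility. The paper merely asserts that such an intermediate projective exists; your idempotent computation (writing $\rho$ as right multiplication by $x\in e_i J^2 e_j$, using $J^2=\sum_l Je_lJ$ to build $Q=\bigoplus_m P_{l(m)}$, and ruling out both split cases via the ``nonzero idempotent in the nilpotent radical'' contradiction) supplies exactly the details the paper omits.
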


\begin{proof}
If $\img \rho$ were contained in $\rad^2P_j$ there would be another projective in between the 
two, hence $\rho$ would not be projectively irreducible. 
\end{proof}

\begin{lemma} Let $\Lambda$ be a finite dimensional algebra and $\rho: P_i\to P_j$  
projectively irreducible. Then 
\begin{enumerate}
\item  There exists a non-split sequence
$0 \to S_i\to Z \to S_j \to 0$. 
\item There exists an epimorphism $\Psi: P_j\to Z$.
\end{enumerate}
\end{lemma}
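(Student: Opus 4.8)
The plan is to construct $Z$ directly as a quotient $P_j/N$ for a suitable submodule $N$ sitting strictly between $\rad^2 P_j$ and $\rad P_j$; this single construction will yield both the non-split sequence in (1) and the epimorphism in (2).

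First I would record two facts about $\img\rho$. Since $P_j$ is indecomposable projective, $\End(P_j)$ is local and $\rad P_j$ is the unique maximal submodule. If $\img\rho$ were not contained in $\rad P_j$, then $\img\rho + \rad P_j = P_j$, so $\rho$ would be onto by Nakayama, hence split as $P_j$ is projective, and then $\rho$ would be an isomorphism because $P_i$ is indecomposable --- contradicting projective irreducibility. Thus $\img\rho \subseteq \rad P_j$. On the other hand, the preceding lemma gives $\img\rho \not\subseteq \rad^2 P_j$.

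Next I would pass to the semisimple layer $\rad P_j/\rad^2 P_j$. Because $\rho(\rad P_i) = (\rad\Lambda)\,\img\rho \subseteq \rad^2 P_j$, composing $\rho$ with the projection $\rad P_j \to \rad P_j/\rad^2 P_j$ kills $\rad P_i$ and hence factors through the top $P_i/\rad P_i = S_i$, giving $\bar\rho\colon S_i \to \rad P_j/\rad^2 P_j$. This map is nonzero precisely because $\img\rho \not\subseteq \rad^2 P_j$, and since $S_i$ is simple it is injective; as $\rad P_j/\rad^2 P_j$ is semisimple, the copy $\img\bar\rho \cong S_i$ splits off, say $\rad P_j/\rad^2 P_j = \img\bar\rho \oplus U$. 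I would then lift $U$ via the correspondence theorem to the submodule $N$ with $\rad^2 P_j \subseteq N \subseteq \rad P_j$ and $N/\rad^2 P_j = U$, and set $Z := P_j/N$ with $\Psi\colon P_j \to Z$ the quotient map, which is the epimorphism of (2).

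Finally, since $N \subseteq \rad P_j$ one computes $Z/\rad Z \cong P_j/\rad P_j = S_j$ and $\rad Z = \rad P_j/N \cong (\rad P_j/\rad^2 P_j)/U \cong S_i$, so the canonical sequence $0 \to \rad Z \to Z \to Z/\rad Z \to 0$ is exactly $0 \to S_i \to Z \to S_j \to 0$. It cannot split, for a splitting would force $Z$ to be semisimple, contradicting $\rad Z \cong S_i \neq 0$. The step I expect to need the most care is the layer argument of the third paragraph: one must verify the identity $\rho(\rad P_i) \subseteq \rad^2 P_j$ to get $\bar\rho$ well defined, and $S_i$ may occur with multiplicity in $\rad P_j/\rad^2 P_j$ --- but since only the single summand $\img\bar\rho$ is needed, choosing $U$ to be a complement of that copy resolves the bookkeeping.
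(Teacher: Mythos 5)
Your proof is correct and is essentially the paper's argument in different clothing: the paper also factors $\rho$ through $\rad P_j$, uses $\img\rho \not\subset \rad^2 P_j$ to extract an induced nonzero map $\varphi\colon \rad P_j \to S_i$ from the semisimple layer $\rad P_j/\rad^2 P_j$, and defines $Z$ via the pushout of $0 \to \rad P_j \to P_j \to S_j \to 0$ along $\varphi$ --- which is exactly your quotient $P_j/N$, since $N = \ker\varphi$ and pushing out along a surjection is the same as quotienting by its kernel. Your explicit verifications (that $\img\rho \subseteq \rad P_j$, that $\rho(\rad P_i) \subseteq \rad^2 P_j$ so $\bar\rho$ is well defined, and that the sequence is non-split because $\rad Z \cong S_i \neq 0$) are details the paper leaves implicit, and they are correct.
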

\begin{proof} (1) Since $\rho$ is not an isomorphism, there is $\rho': P_i\to \rad P_j$ 
such that $\rho=a\rho'$ where $a: \rad P_j\to P_j$ is the inclusion. 
Since $\img\rho\not\subset \rad^2P_j$, the composition 
$P_i\xrightarrow{\rho'}\rad P_j\xrightarrow{\pi}\rad P_j/\rad^2P_j$ is non-zero with 
$\img(\pi \rho')\cong S_i$. 
Let 
$\varphi: \rad P_j\to S_i$ be the induced non-zero map. Now use the following exact 
sequence and push-out diagram to define $Z$ and the morphism $\Psi: P_j \to Z$:
$$
\xymatrix@!C=0pt@!R=5pt{
&0&\ar[r]&&\rad P_j\ar[d]^{\varphi}&\ar[r]^a&&P_j\ar[d]^{\Psi}&\ar[r]&&S_j\ar@{=}[d]&\ar[r]&&0\\
&0&\ar[r]&&S_i&\ar[r]&&Z&\ar[r]&&S_j&\ar[r]&&0.\\
}
$$

It follows from the diagram that $\Psi$ is epimorphism.
\end{proof}

From now on we concentrate on the generalized cluster categories of type $A_n$ and 
associated cluster-tilted algebras of type $A_n$.
\begin{lemma} Let $B$ be a cluster-tilted algebra of type $A_n$. Let $M$ be an indecomposable 
$B$-module. Then multiplicity of each simple composition factor of $M$ is $0$ or $1$.
\end{lemma}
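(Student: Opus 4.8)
The plan is to reduce this statement, which concerns composition multiplicities, to the dimension bound already established in Proposition~\ref{dimHom1}, using the Buan--Marsh--Reiten equivalence of Theorem~\ref{T:KR} together with the interpretation of multiplicities in Remark~\ref{simples}. Write $B=\End_{\mathcal{C}}(T)$ for a cluster-tilting object $T=\bigoplus_{j=1}^n T_j$ in a generalized cluster category $\mathcal{C}$ of type $A_n$; every cluster-tilted algebra of type $A_n$ arises in this way. Fix a simple $B$-module $S_i$ with projective cover $P_i$. By Remark~\ref{simples}, the multiplicity of $S_i$ as a composition factor of $M$ equals $\dim_K\Hom_B(P_i,M)$, so it suffices to prove $\dim_K\Hom_B(P_i,M)\le 1$.

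Next I would transport the problem into $\mathcal{C}$. By Theorem~\ref{T:KR} the functor $F=\Hom_{\mathcal{C}}(T,-)$ induces an equivalence $\mathcal{C}/\add(T[1])\xrightarrow{\sim}\modcat(B)$ with $F(T_i)=\Hom_{\mathcal{C}}(T,T_i)=P_i$. Since $M$ is indecomposable, it lifts along $F$ to an indecomposable object $X\in\mathcal{C}$ with $X\notin\add(T[1])$ and $F(X)=M$. Because $F$ is an equivalence on the quotient, $\Hom_B(P_i,M)\cong\Hom_{\mathcal{C}/\add(T[1])}(T_i,X)$, which by definition is the quotient of $\Hom_{\mathcal{C}}(T_i,X)$ by the subspace of morphisms factoring through an object of $\add(T[1])$.

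The key simplification is that this subspace vanishes: any such morphism would factor as $T_i\to T'[1]\to X$ with $T'\in\add T$, but $\Hom_{\mathcal{C}}(T_i,T'[1])=\Ext^1_{\mathcal{C}}(T_i,T')=0$ since $T$ is rigid. Hence $\Hom_B(P_i,M)\cong\Hom_{\mathcal{C}}(T_i,X)$. As $T_i$ and $X$ are both indecomposable objects of $\mathcal{C}$, Proposition~\ref{dimHom1}(1) gives $\dim_K\Hom_{\mathcal{C}}(T_i,X)\in\{0,1\}$. Therefore $\dim_K\Hom_B(P_i,M)\le 1$, and by Remark~\ref{simples} the multiplicity of $S_i$ in $M$ is $0$ or $1$.

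The main obstacle is bookkeeping rather than substance: one must make sure that indecomposable $B$-modules genuinely lift to indecomposable objects of $\mathcal{C}$ lying outside $\add(T[1])$, and that the identification $\Hom_B(P_i,M)\cong\Hom_{\mathcal{C}/\add(T[1])}(T_i,X)$ is the correct reading of the equivalence; both are standard consequences of Theorem~\ref{T:KR}. Once the rigidity argument collapses the quotient to the honest Hom-space in $\mathcal{C}$, the result is immediate from Proposition~\ref{dimHom1}, so no separate combinatorial analysis of the string modules is needed here.
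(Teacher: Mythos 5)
Your proof is correct and follows essentially the same route as the paper: the paper's own proof simply cites Remark~\ref{simples} (multiplicity equals $\dim_K\Hom_B(P_i,M)$) and Proposition~\ref{dimHom1}. Your additional bookkeeping — lifting $M$ through the equivalence of Theorem~\ref{T:KR} and using rigidity of $T$ to show no morphisms $T_i\to X$ factor through $\add(T[1])$, so that $\Hom_B(P_i,M)\cong\Hom_{\mathcal{C}}(T_i,X)$ — is exactly the step the paper leaves implicit, and you have justified it correctly.
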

\begin{proof} It follows from Remark \ref{simples} that the multiplicity of the simple $S_i$ in 
$M$ is equal to $\dim_k\Hom_B(P_i,M)$ and from Proposition \ref{dimHom1} that it is equal to 1.
\end{proof}

We recall, that a path $M_0\to M_1 \to \dots \to M_s$ in the AR quiver is called 
\emph{sectional} if $\tau M_{i+1} \not = M_{i-1}$ for all $i=1, \dots , s-1$. 
A maximal sectional path is a sectional path which is not a proper subpath of any other 
sectional path. With this definition consider the following results.  

\begin{lemma} \label{leq2} Let $B$ be a cluster-tilted algebra of type $A_n$. Let $P$ be an indecomposable projective $B$-module. Then:
\begin{enumerate}
\item All indecomposable projective $B$-modules which are in the support of $\Hom_B(-,P)$ are 
on the maximal sectional paths to $P$. 
\item There are at most two projectively irreducible maps $\rho: P_j\to P$.
 \end{enumerate}
\end{lemma}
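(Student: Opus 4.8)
The plan is to prove both parts inside the cluster category $\mathcal C$, combining Corollary \ref{supports} with the rigidity of the cluster-tilting object whose summands are the projectives.

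For part (1), I would first identify which projectives can occur in the support. By Theorem \ref{T:KR} the indecomposable projective $B$-modules are the summands $T_j$ of the cluster-tilting object $T$, sitting at their positions in the AR quiver of $\mathcal C$, and by Remark \ref{simples} a projective $P_j$ lies in $\supp\Hom_B(-,P)$ exactly when $S_j$ is a composition factor of $P$. Since any nonzero $B$-map lifts to a nonzero map in $\mathcal C$, every such $P_j$ lies in $\supp\Hom_{\mathcal C}(-,P)$, which by Corollary \ref{supports}(2) is the maximal rectangle ending at $P$; the two edges of this rectangle meeting at $P$ lie on the two maximal sectional paths to $P$. Thus (1) reduces to showing that no indecomposable projective sits in this rectangle off those two edges.

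The mechanism I would use is that distinct summands of $T$ have no extensions: $\Ext^1_{\mathcal C}(P,P')=0$ for all indecomposable projectives $P,P'$. By Corollary \ref{supports}(3)--(5) the support of $\Ext^1_{\mathcal C}(P,-)$ is the maximal rectangle ending at $\tau P$ (equivalently starting at $\tau^{-1}P$), which is the $\tau$-translate of the rectangle $\supp\Hom_{\mathcal C}(-,P)$. The key geometric claim is then that the rectangle ending at $P$, with its two sectional edges at $P$ deleted, is contained in this Ext-support. Granting this, a projective $P'$ lying in the rectangle but off the two sectional paths would give $\Ext^1_{\mathcal C}(P,P')\neq 0$, contradicting rigidity; hence every projective in the support lies on a maximal sectional path to $P$. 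I expect this rectangle comparison to be the main obstacle. The two rectangles are congruent and differ by a single application of $\tau$, so in principle the containment can be read off the AR grid, but the bookkeeping must be carried out inside $\mathcal C$ itself, where $\tau$ is an autoequivalence and the rectangles are the genuine (periodic) ones of Corollary \ref{supports}; a naive computation in $\modcat(kQ)$ is misleading here, since truncation at the boundary of the module-category AR quiver distorts which edges of the hammock survive the $\tau$-shift.

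Finally, part (2) would follow from (1). A projectively irreducible map $\rho\colon P_j\to P$ is nonzero, so $P_j$ lies in $\supp\Hom_B(-,P)$ and, by (1), on one of the two maximal sectional paths to $P$. On a fixed such path, suppose a projective $P_{j'}$ lies strictly between $P_j$ and $P$. Since $\dim_K\Hom_{\mathcal C}(P_j,P)\le 1$ by Proposition \ref{dimHom1}, the unique nonzero map $P_j\to P$ is the composite of irreducible maps along the sectional path and therefore factors as $P_j\to P_{j'}\to P$ through the projective $P_{j'}$; as $P_j,P_{j'},P$ are pairwise non-isomorphic indecomposables, neither factor is a split monomorphism or split epimorphism, so $\rho$ is not projectively irreducible. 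Hence each of the two sectional paths to $P$ carries at most one projectively irreducible map into $P$, namely the one from the projective nearest $P$, giving at most two in total.
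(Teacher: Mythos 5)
Your proposal is correct and follows essentially the same route as the paper: part (1) combines Corollary \ref{supports} (the Hom-support is the maximal rectangle ending at $P$, the Ext-support the rectangle ending at $\tau P$) with rigidity of the projectives to force any projective in the rectangle onto the two sectional paths, and part (2) reduces to the observation that only the projective nearest to $P$ on each of the (at most two) sectional paths can give a projectively irreducible map. The only differences are expository: you flag the rectangle-containment claim explicitly and justify the ``nearest projective'' step via $\dim_K\Hom_{\mathcal C}(P_j,P)\le 1$ and factorization along the sectional path, both of which the paper leaves implicit.
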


\begin{proof} (1) By Corollary \ref{supports} the support of $\Hom_B(- ,P)$ is the maximal rectangle ending at $P$. Also the support of $\Ext_B^1(P,-)$ is the maximal rectangle ending at $\tau P$. 
Since $\Ext_B^1(P,P_i)=0$ it follows that all the indecomposable projective $B$-modules 
which map to $P$ must be on the maximal sectional paths to $P$.

(2) If $\rho: P_j\to P$ is projectively irreducible, then $P_j$ is on a sectional path 
ending at $P$ and it is the closest projective (on that path) to $P$. Since there are at most two different 
sectional paths ending at $P$, the result follows.
\end{proof}

\begin{lemma} \label{r/r2} Let $f: P_i\to P$ be a non-isomorphism with $P$ indecomposable 
such that the induced map 
$P_i\to \rad P/\rad^2P$ is non-zero. Then $f$ is projectively irreducible.
\end{lemma}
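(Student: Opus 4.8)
The plan is to unwind the definition of projective irreducibility and verify the required dichotomy for an \emph{arbitrary} factorization, with the whole argument resting on a single computation about radicals. First I would record the reformulation of the hypothesis. Since $f$ is a non-isomorphism and $P$ is an indecomposable projective, $f$ cannot be surjective: a surjection onto a projective splits, and $P_i$ being indecomposable would force $f$ to be an isomorphism. Hence $\img f\subseteq\rad P$, so the induced map $P_i\to\rad P/\rad^2P$ is defined, and the assumption that it is non-zero says precisely that $\img f\not\subseteq\rad^2P$. This inequality is the only property of $f$ that the proof will use.

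Next I would isolate the key facts about radicals, both flowing from the same splitting argument together with $\rad M=(\rad\Lambda)M$ for finitely generated $M$. (a) Any non-isomorphism from an indecomposable module to an indecomposable projective has image inside the radical of the target (same surjectivity argument as above). (b) If $g\colon Q_k\to P$ is a non-isomorphism between indecomposable projectives, then $g(\rad Q_k)=(\rad\Lambda)\cdot\img g\subseteq(\rad\Lambda)(\rad P)=\rad^2P$, using (a) for the inclusion $\img g\subseteq\rad P$. Combining these, if $\alpha'\colon P_i\to Q_k$ and $\beta'\colon Q_k\to P$ are \emph{both} non-isomorphisms between indecomposable projectives, then $\img(\beta'\alpha')=\beta'(\img\alpha')\subseteq\beta'(\rad Q_k)\subseteq\rad^2P$. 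In words, a composite of two radical maps between indecomposable projectives lands in $\rad^2$ of the target.

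Then I carry out the case analysis on an arbitrary factorization $f=\beta\alpha$ with $\alpha\colon P_i\to Q$, $\beta\colon Q\to P$ and $Q$ projective. Decompose $Q=\bigoplus_k Q_k$ into indecomposables, write $\alpha=(\alpha_k)$ and $\beta=(\beta_k)$ so that $f=\sum_k\beta_k\alpha_k$, and denote by $\iota_{k}\colon Q_{k}\to Q$ and $\pi_{k}\colon Q\to Q_{k}$ the structure maps. If some $\beta_{k_0}$ is an isomorphism, then $Q_{k_0}\cong P$ and $s=\iota_{k_0}\beta_{k_0}^{-1}\colon P\to Q$ satisfies $\beta s=\beta_{k_0}\beta_{k_0}^{-1}=1_P$, so $\beta$ is a split epimorphism. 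Otherwise every $\beta_k$ is a non-isomorphism, and I claim some $\alpha_k$ must be an isomorphism: if not, every pair $\alpha_k,\beta_k$ consists of non-isomorphisms, so the combined fact above gives $\img(\beta_k\alpha_k)\subseteq\rad^2P$ for each $k$, whence $\img f\subseteq\sum_k\img(\beta_k\alpha_k)\subseteq\rad^2P$, contradicting $\img f\not\subseteq\rad^2P$. An isomorphism $\alpha_{k_0}$ yields the retraction $\alpha_{k_0}^{-1}\pi_{k_0}\colon Q\to P_i$ with $(\alpha_{k_0}^{-1}\pi_{k_0})\alpha=1_{P_i}$, so $\alpha$ is a split monomorphism. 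In either case the defining condition holds, so $f$ is projectively irreducible.

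The only real obstacle is the bookkeeping for a possibly decomposable $Q$: one must see that the correct dichotomy is ``some $\beta_k$ is an isomorphism'' versus ``some $\alpha_k$ is an isomorphism'', and that these translate into ``$\beta$ split epi'' and ``$\alpha$ split mono'' via the explicit section and retraction above. The genuinely substantive point --- that two stacked radical maps between indecomposable projectives push the image into $\rad^2P$ --- is the computation in (b); everything else is formal.
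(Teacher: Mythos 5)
Your proof is correct, and its mathematical core coincides with the paper's: both arguments rest on the single computation that a composite of two non-isomorphisms between indecomposable projectives has image in $\rad^2P$. In the paper this appears as the observation that the map $\gamma\colon Q\to\rad P$ (through which a non-isomorphism $\beta$ factors) carries $\rad Q$ into $\rad^2P$, phrased via semisimplicity of $\rad P/\rad^2P$; in your write-up it is fact (b), $g(\rad Q_k)=(\rad\Lambda)\img g\subseteq\rad^2P$. Where you genuinely go further is in the treatment of the intermediate projective $Q$: the paper's proof only considers factorizations through an \emph{indecomposable} projective $Q$ and concludes that $\alpha$ or $\beta$ is an isomorphism, leaving unaddressed the reduction from an arbitrary projective $Q$, which is what the definition of projectively irreducible actually quantifies over. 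Your component-wise bookkeeping --- decomposing $Q=\bigoplus_k Q_k$, writing $f=\sum_k\beta_k\alpha_k$, and showing that ``some $\beta_{k_0}$ iso'' yields $\beta$ split epi while ``all $\alpha_k,\beta_k$ non-iso'' forces $\img f\subseteq\rad^2P$ --- is exactly the missing reduction, and it also lands you directly on the split mono/split epi dichotomy of the definition, rather than the stronger isomorphism conclusion that only makes sense when $Q$ is indecomposable. So your argument is a more complete rendering of the same idea; what the paper's version buys is brevity, at the cost of silently restricting to indecomposable $Q$.
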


\begin{proof} Consider a factorization of $f$ as $f=\beta\alpha$ through an indecomposable projective $Q$. 
We need to show that either $\beta$ is an isomorphism or $\alpha$ is an isomorphism. 
If $\beta$ is not isomorphism then it factors through $\rad P$, 
hence there is $\gamma$ such that $\beta = a\gamma$. Then 
$f =\beta\alpha=a\gamma\alpha$ and $f = a\rho'$ imply $a\gamma\alpha=a\rho'$ (recall that 
$a$ is the inclusion $\rad P\to P$). 
Since $a$ is a monomorphism, this implies $\gamma\alpha=\rho'$. By assumption $\pi\rho'\neq0$ and hence $\pi\gamma\alpha\neq0$. This, together with the fact that $\rad P/\rad^2P$ is semisimple, imply  that $\img\alpha\not\subset \rad Q$. Therefore $\alpha$ is an isomorphism. So $f$ is projectively irreducible.
$$\xymatrix@!C=0pt@!R=0pt{
&&&&Q\ar[rrd]^{\beta}\ar[dd]^{\gamma}&&&&&&\\
&&P_i\ar[rrddd]_{\pi\rho'}\ar[rru]^{\alpha}\ar[rrrr]^{f \ \ \ \ \ \ \ \ \ \ \ \ \ \ \ }\ar[rrd]_{\ \ \ \ \ \rho'}&&&&P&\\
&&&&\rad P\ar[dd]^{\pi}\ar[rru]_a&&&&&\\
&&&&&&&&&&&&\\
&&&&\rad P/\rad^2P&&
}$$
\end{proof}

\begin{prop} Let $B$ be a cluster-tilted algebra of type $A_n$. Let $P$ be an indecomposable projective.  Then $\rad P$ is either indecomposable or a direct sum of two indecomposables.
\end{prop}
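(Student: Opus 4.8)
The plan is to bound the number of indecomposable direct summands of $\rad P$ by controlling the semisimple top $\rad P/\rad^2 P$. The guiding observation is a general one: the functor $\operatorname{top}(-) = (-)/\rad(-)$ is additive, and by Nakayama's lemma it sends every nonzero finite-dimensional module to a nonzero semisimple module. Consequently, if $\rad P = \bigoplus_{k=1}^r N_k$ with each $N_k$ indecomposable and nonzero, then $\rad P/\rad^2 P = \bigoplus_k \operatorname{top}(N_k)$ has at least $r$ simple summands. Hence the number of indecomposable summands of $\rad P$ is at most the number of simple summands of $\rad P/\rad^2 P$, and it suffices to show that the latter is at most two.

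First I would identify the simple summands of $\rad P/\rad^2 P$ with the projectively irreducible maps into $P$. If $S_i$ is a summand of $\rad P/\rad^2 P$, then, since $P_i$ is projective, a nonzero map $P_i \to \rad P/\rad^2 P$ lifts to a map $f\colon P_i \to \rad P \hookrightarrow P$ whose induced map to $\rad P/\rad^2 P$ is nonzero; by Lemma \ref{r/r2} this $f$ is projectively irreducible. Conversely, a projectively irreducible map $\rho\colon P_i \to P$ satisfies $\img\rho \not\subset \rad^2 P$ (the earlier lemma), hence induces a nonzero map $P_i \to \rad P/\rad^2 P$. Moreover $\dim_K \Hom_{\mathcal C}(P_i,P) \leq 1$ by Proposition \ref{dimHom1}(1), so each vertex $i$ contributes at most one projectively irreducible map, and $S_i$ occurs in $\rad P/\rad^2 P$ with multiplicity at most one. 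Thus the number of simple summands of $\rad P/\rad^2 P$, counted with multiplicity, equals the number of projectively irreducible maps into $P$.

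I would then invoke Lemma \ref{leq2}(2), which asserts that there are at most two projectively irreducible maps $P_j \to P$. Combined with the correspondence above, this shows that $\rad P/\rad^2 P$ is a direct sum of at most two simple modules. Therefore $\rad P$ has at most two indecomposable direct summands, i.e.\ it is either indecomposable or a direct sum of two indecomposables; the degenerate possibility $\rad P = 0$ occurs precisely when $P$ is a simple projective.

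The main obstacle is the second step: making the identification between simple summands of $\rad P/\rad^2 P$ and projectively irreducible maps precise enough that the bound transfers faithfully, rather than producing only a loose inequality. This hinges on using the lifting property of projectives together with Lemma \ref{r/r2} to manufacture projectively irreducible maps out of top summands, and on the estimate $\dim_K \Hom \leq 1$ from Proposition \ref{dimHom1} to forbid higher multiplicities; only then does the clean count ``at most two projectively irreducible maps'' of Lemma \ref{leq2}(2) translate into ``at most two simple top summands,'' and hence into the desired decomposition of $\rad P$.
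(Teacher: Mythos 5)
Your proof is correct, and its decisive step is genuinely different from the paper's. Both arguments share the same first half: simple summands of $\rad P/\rad^2P$ correspond to projectively irreducible maps into $P$ (via Lemma~\ref{r/r2}, with multiplicity one forced by the bound $\dim_K\Hom(P_i,P)\le 1$ of Proposition~\ref{dimHom1}), and Lemma~\ref{leq2}(2) caps the number of such maps at two. Where you diverge is in passing from ``$\rad P/\rad^2P$ has at most two simple summands'' to the conclusion: you invoke the general fact that the number of indecomposable direct summands of a finite-dimensional module is bounded by the number of simple summands of its top (additivity of $(-)/\rad(-)$ plus Nakayama, together with $\rad(\rad P)=\rad^2P$), so no case analysis is needed. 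The paper instead constructs the decomposition explicitly: with one projectively irreducible map it shows $\rad P=\img\rho_i$, a quotient of $P_i$ with simple top, hence indecomposable; with two maps it shows the epimorphism $P_i\oplus P_j\to\rad P$ satisfies $\img\rho_i'\cap\img\rho_j'=0$ by a sectional-path/projective-cover argument, so that $\rad P=\img\rho_i'\oplus\img\rho_j'$. Your route is shorter and more general (it works over any finite-dimensional algebra once the bound on projectively irreducible maps is available), but it only counts summands; the paper's constructive version additionally identifies the summands as the images of the projectively irreducible maps, which is exactly what the subsequent Corollary~\ref{summand} (that $\img\rho_j$ is a direct summand of $\rad P$) relies on, and that identification does not follow from your argument without extra work.
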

\begin{proof} Let $\rad P/\rad^2P=\oplus S_i$. Then the maps $\rho_i: P_i\to P$ are projectively irreducible by Lemma \ref{r/r2} and there are at most two such maps by Lemma \ref{leq2} (2). If there is only one such $\rho_i: P_i\to P$ then $\rad P=\img\rho_i$ and hence indecomposable. 

If there are two such maps $\rho_i: P_i\to P$ and $\rho_{j}: P_{j}\to P$ then the map 
$P_i\oplus P_j\xrightarrow{\rho_i',\,\rho_j'}\rad P$ is an epimorphism 
and to see that $\rad P=\img\rho_i'\cap\rho_j'$ we show that 
$\img\rho_i'\cap \img\rho_j'=0$. \\
Suppose there is a simple $S\subset \img\rho_i'\cap \img\rho_j'$. Then there are maps $P(S)\xrightarrow{\xi_i}P_i$ and $P(S)\xrightarrow{\xi_j}P_j$ such that 
$\rho_i\xi_i\neq0$ and $\rho_j\xi_j\neq0$ (where $P(S)$ is the projective cover of $S$). 
Therefore $P(S)$ is on both sectional paths, which is impossible if they are 
distinct sectional paths to $P$. Therefore $\img\rho_i'\cap \img\rho_j'=0$ and hence $\rad P= \img\rho_i'\oplus \img\rho_j'$.
\end{proof}

\begin{cor} \label{summand} Let $\rho_j: P_j\to P$ be a projectively irreducible map. 
Then $\img\rho_j$ is a direct summand of $\rad P$.  
\end{cor}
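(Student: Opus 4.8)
The plan is to derive the statement directly from the preceding proposition, whose proof already exhibits $\rad P$ as a direct sum indexed by the projectively irreducible maps into $P$. That proof writes $\rad P/\rad^2 P=\bigoplus_i S_i$ with at most two simple summands (the bound being Lemma \ref{leq2}(2)), chooses for each summand $S_i$ a projectively irreducible map $\rho_i\colon P_i\to P$, and establishes $\rad P=\img\rho_i$ when there is one summand and $\rad P=\img\rho_i'\oplus\img\rho_j'$ when there are two, where $\rho_i'$ denotes the corestriction of $\rho_i$ along the inclusion $\rad P\hookrightarrow P$ (so that $\img\rho_i'=\img\rho_i$). Hence it suffices to identify the given $\rho_j$, up to its image, with one of these distinguished maps.

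The first step is to see which simple summand $\rho_j$ detects. Being a non-isomorphism between indecomposable projectives, $\rho_j$ factors through $\rad P\hookrightarrow P$, so $\img\rho_j\subseteq\rad P$; and by the lemma guaranteeing $\img\rho\not\subseteq\rad^2 P$ for any projectively irreducible $\rho$, the induced map $P_j\to\rad P/\rad^2 P$ is non-zero. Its image is the simple top $S_j$ of $P_j$, so $S_j$ occurs among the summands of $\rad P/\rad^2 P$ and $P_j$ is the projective cover of that summand, i.e.\ $P_j$ is one of the $P_i$ appearing in the proposition.

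The final step is to match the maps, not merely their sources. Here I would invoke Proposition \ref{dimHom1}: transporting along the equivalence $\mathcal C/(\add T[1])\cong\modcat(B)$ of Theorem \ref{T:KR}, the space $\Hom_B(P_j,P)$ is a quotient of $\Hom_{\mathcal C}(T_j,T_P)$, where $T_j,T_P$ are the cluster-tilting summands corresponding to $P_j$ and $P$, and is therefore at most one-dimensional. Thus any two non-zero maps $P_j\to P$ agree up to a scalar and share the same image, so $\img\rho_j=\img\rho_i$ for the distinguished map $\rho_i$ attached to $S_j=S_i$; by the proposition this is a direct summand of $\rad P$. I expect this last matching step to be the only genuine obstacle: a priori there could be several projectively irreducible maps $P_j\to P$ with distinct images, and it is precisely the one-dimensionality of the Hom-space, inherited from the cluster category, that collapses them to a single image and thereby delivers the corollary.
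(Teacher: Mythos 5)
Your proof is correct and follows essentially the paper's (implicit) argument: the paper states this corollary without a separate proof precisely because it is read off from the decomposition $\rad P=\img\rho_i$ or $\rad P=\img\rho_i'\oplus\img\rho_j'$ established in the proof of the preceding proposition. Your additional matching step --- using $\img\rho_j\not\subseteq\rad^2P$ to identify $P_j$ with one of the distinguished $P_i$, and then $\dim_K\Hom_B(P_j,P)\le 1$ (Proposition \ref{dimHom1} transported through the equivalence of Theorem \ref{T:KR}) to force $\img\rho_j=\img\rho_i$ --- is a legitimate and careful filling-in of a detail the paper leaves implicit.
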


\begin{remark} \label{S1} Let $P_1\xrightarrow{\rho_1}P_2\xrightarrow{\rho_2}\dots \xrightarrow{}P_{j-1}\xrightarrow{\rho_{j-1}}P_j$ be a sectional sequence 
of projectively irreducible maps. Then $\rho_{t}\dots\rho_1\neq0$ and 
$\img\rho_{t}\dots\rho_1\subset \img\rho_t$ for all $t=1,\dots,j-1$. 
In particular $S_1\subset \img\rho_t$ for all $t=1, \dots, j-1$.
\end{remark}

\begin{prop} \label{Z} Let $B$ be a cluster-tilted algebra of type $A_n$.  Let $P_1\xrightarrow{\rho_1}P_2\xrightarrow{\rho_2}\dots \xrightarrow{}P_{t-1}\xrightarrow{\rho_{t-1}}P_t$ be a sectional sequence of projectively irreducible maps. Then:
\begin{enumerate}
\item There exists a uniserial module $Z^{(t)}$ with composition factors $S_1, S_2,\dots, S_t$, in this order, with $\soc Z^{(t)}=S_1$.
\item There is an epimorphism $\Psi_t: P_t \to Z^{(t)}$.
\end{enumerate}
\end{prop}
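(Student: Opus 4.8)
The plan is to argue by induction on the length $t$ of the sectional sequence, at each step stacking the new composition factor $S_t$ on top of the uniserial module $Z^{(t-1)}$ by means of a pushout. This generalizes the pushout construction in the proof of the Lemma furnishing the non-split sequence $0\to S_i\to Z\to S_j\to 0$ together with an epimorphism $P_j\to Z$, which is exactly the case $t=2$. For the \emph{base case} $t=1$ there are no maps in the sequence, and I would simply set $Z^{(1)}:=S_1=\operatorname{top} P_1$ with $\Psi_1\colon P_1\twoheadrightarrow S_1$ the canonical projection; this is uniserial with socle $S_1$, so (1) and (2) hold trivially.

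For the \emph{inductive step}, assume $Z^{(t-1)}$ and an epimorphism $\Psi_{t-1}\colon P_{t-1}\to Z^{(t-1)}$ have been built, with $Z^{(t-1)}$ uniserial, composition factors $S_1,\dots,S_{t-1}$, socle $S_1$ and $\operatorname{top}Z^{(t-1)}=S_{t-1}$. As in the earlier lemma, write $\rho_{t-1}=a\rho_{t-1}'$ with $a\colon\rad P_t\hookrightarrow P_t$ the inclusion and $\rho_{t-1}'\colon P_{t-1}\to\rad P_t$. By Corollary \ref{summand} the image $D:=\img\rho_{t-1}'$ is a direct summand of $\rad P_t$, say $\rad P_t=D\oplus C$ with projection $q\colon\rad P_t\to D$. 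The goal is to produce a surjection $\varphi_t\colon\rad P_t\to Z^{(t-1)}$: I would show that $\Psi_{t-1}$ factors through $\rho_{t-1}'$, i.e. that $\ker\rho_{t-1}'\subseteq\ker\Psi_{t-1}$, yielding a surjection $g\colon D\to Z^{(t-1)}$ with $g\rho_{t-1}'=\Psi_{t-1}$, and then set $\varphi_t:=g\circ q$. Forming the pushout of the canonical short exact sequence $0\to\rad P_t\xrightarrow{a}P_t\to S_t\to 0$ along $\varphi_t$ produces an exact sequence $0\to Z^{(t-1)}\to Z^{(t)}\to S_t\to 0$ and an induced map $\Psi_t\colon P_t\to Z^{(t)}$; a direct check on the pushout shows $\Psi_t$ is surjective precisely because $\varphi_t$ is, which gives (2).

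It then remains to verify the shape of $Z^{(t)}$. Since $P_t$ is an indecomposable projective $B$-module its top is the simple $S_t$, so $\operatorname{top}Z^{(t)}=\operatorname{top}\Psi_t(P_t)$ is a quotient of $S_t$ and hence equals $S_t$; consequently $\rad Z^{(t)}=Z^{(t-1)}$. As $Z^{(t-1)}$ is uniserial by the induction hypothesis and occurs as the radical of $Z^{(t)}$ under a simple top, every radical layer of $Z^{(t)}$ is simple, so $Z^{(t)}$ is uniserial with composition series $S_1,\dots,S_t$ read from the socle upward and $\soc Z^{(t)}=\soc Z^{(t-1)}=S_1$, establishing (1). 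The main obstacle is the factorization $\ker\rho_{t-1}'\subseteq\ker\Psi_{t-1}$ (equivalently, that $Z^{(t-1)}$ is a quotient of $D=\img\rho_{t-1}'$), which is what makes $\varphi_t$ well defined and surjective. This is exactly where the sectional hypothesis and the type $A_n$ string structure must be used: I would apply Remark \ref{S1} to the truncated sequence $P_1\to\cdots\to P_{t-1}$ to guarantee that the composite $\rho_{t-2}\cdots\rho_1$ is nonzero with $S_1$ in its image, lying in each intermediate image, so that $Z^{(t-1)}$ is precisely the uniserial piece of $P_{t-1}$ running along the sectional direction toward $P_t$, i.e. the part detected by $\rho_{t-1}'$. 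Since $\img\rho_{t-1}'$ is a direct summand of $\rad P_t$ (Corollary \ref{summand}), the complement $C$ absorbs what $\rho_{t-1}'$ kills, and one confirms the containment of kernels on composition factors using the multiplicity-free property of indecomposable $B$-modules proved above (each simple occurs at most once) together with Remark \ref{simples}.
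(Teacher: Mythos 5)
Your proposal is correct and is essentially the paper's own proof: induction on $t$, with $Z^{(t)}$ obtained as the pushout of $0\to\rad P_t\xrightarrow{a}P_t\to S_t\to 0$ along an epimorphism $\rad P_t\to Z^{(t-1)}$, which exists because $\img\rho'_{t-1}$ is a direct summand of $\rad P_t$ (Corollary \ref{summand}) and $\Psi_{t-1}$ factors through $\img\rho'_{t-1}$ --- the factorization being forced, exactly as in the paper, by the fact that $S_1$ occurs in $\img\rho_{t-1}$ (Remark \ref{S1}) but at most once in $P_{t-1}$ (Remark \ref{simples}, Proposition \ref{dimHom1}), so that $\ker\rho'_{t-1}$ has no composition factor $S_1$ and hence maps to zero in the uniserial module $Z^{(t-1)}$ whose socle is $S_1$. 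One small correction: Remark \ref{S1} must be applied to the full sequence $P_1\to\cdots\to P_t$ rather than the truncated one $P_1\to\cdots\to P_{t-1}$, since you need $S_1\subset\img\rho_{t-1}$ itself, which the truncated version does not supply.
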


\begin{proof} This will be done by induction. \\
($t=1$) $Z^{(1)}=S_1$ and there is an epimorphism $\Psi_1: P_1 \to S_1$.\\
By construction and by induction hypothesis, there is an exact sequence and the following maps:
$$\xymatrix@!C=0pt@!R=0pt{
&0\ar[r]&&\ker\rho_t&\ar[r]&&P_t\ar[dd]^{\Psi_t}&\ar[r]^{\overline \rho_t}&&\img\rho_t\ar@{.>}[llldd]^{\exists \varphi_t}&\ar[r]&&0\\
&&&&&&&&&&&&&&&&&&&&&&&&&&&&&\\
&&&&&&Z^{(t)}&&&&&&\\
}$$
By induction hypothesis $Z^{(t)}$ is uniserial with $\soc Z^{(t)}=S_1$. Consider the induced exact sequence:
$$0\xrightarrow{} \Hom_B(P_1,\ker\rho_t)\xrightarrow {}\Hom_B(P_1,P_t)\xrightarrow{}\Hom_B(P_1,\img\rho_t)\xrightarrow{}0.$$
Since $S_1\subset \img\rho_t$ by Remark \ref{S1} and $\img\rho_t$ is indecomposable, it follows by Proposition \ref{dimHom1} that $\dim_K\Hom_B(P_1,\img\rho_t)=1$. Similarly we have $\dim_K\Hom_B(P_1,P_t)=1$.
Therefore $\dim_K\Hom_B(P_1,\ker\rho_t)=0$. Hence $\ker\rho_t$ does not have $S_1$ as composition factor and therefore $\Hom_B(\ker\rho_t, Z^{(t)})=0$. Therefore $\Psi_t$ factors through $\img\rho_t$, i.e. there exists $\varphi_t$ such that $\Psi_t= \varphi_{t}{\overline\rho_t}$. Consequently $\varphi_t$ is an epimorphism.

Since $\img\rho_t$ is a direct summand of $\rad P_{t+1}$, the composition $\varphi_t p$ as in
$\rad P_{t+1}\xrightarrow{p} \img\rho_t\xrightarrow{\varphi_t}Z^{(t)}$, is an epimorphism. Consider the following exact sequence and define $Z^{(t+1)}$ and $\Psi_{t+1}$ using the push-out diagram: 
$$\xymatrix@!C=0pt@!R=0pt{
&0&\ar[r]&&\rad P_{t+1}\ar[dd]^{\varphi_tp}&\ar[r]^a&&P_{t+1}\ar[dd]^{\Psi_{t+1}}&\ar[r]&&S_{t+1}\ar@{=}[dd]&\ar[r]&&0\\
&&&&&&&&&&&&&&&&&&&&&&&&&&&&&&&&&&&&&&&&&&&&&&\\
&0&\ar[r]&&Z^{(t)}&\ar[r]&&Z^{(t+1)}&\ar[r]&&S_{t+1}&\ar[r]&&0. &&(*_t)\\
}$$

Since $\varphi_t p$ is an epimorphism it follows that $\Psi_{t+1}$ is epimorphism. Therefore $Z^{(t+1)}$ is indecomposable and uniserial with composition factors $S_1,S_2,\dots,S_{t+1}$.
\end{proof}

\begin{cor}  Let $P$ be indecomposable projective and let 
\[
\begin{array}{ccccccccccc}
P_1 & \xrightarrow{\rho_1} & P_2 & \xrightarrow{\rho_2} & \dots & \xrightarrow{\rho_{i-2}} & P_{i-1}& \xrightarrow{\rho_{i-1}} & P_i  &\xrightarrow{\rho_{i}}& P\\
Q_1 & \xrightarrow{\xi_1} & Q_2 & \xrightarrow{\xi_2}& \dots & \xrightarrow{\xi_{j-2}}& Q_{j-1}& \xrightarrow{\xi_{j-1}} & Q_j &\xrightarrow{\xi_{j}} & P
\end{array}
\]
be two sectional sequences of projectively irreducible maps. Then there is a quotient $V$ of $P$ such that $\rad V\cong U_1\oplus U_2$ where $U_1$ and $U_2$ 
are uniserial modules with composition factors
$S_1,S_2,\dots,S_{i}$ and $R_1,R_2,\dots,R_{j}$ where $S_t=P_t/\rad P_t$ for $t=1,\dots, i$ 
and 
$R_t=Q_t/\rad Q_t$ for $t=1,\dots,j$ (resp.). 
\end{cor}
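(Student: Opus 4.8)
The plan is to construct $V$ as an explicit quotient of $P$, recycling the uniserial modules and epimorphisms produced in Proposition \ref{Z}. First I would observe that, as we are given two distinct sectional sequences of projectively irreducible maps ending at $P$, their last maps $\rho_i\colon P_i\to P$ and $\xi_j\colon Q_j\to P$ are projectively irreducible, so by Corollary \ref{summand} each of $\img\rho_i$ and $\img\xi_j$ is a direct summand of $\rad P$. Since there are at most two projectively irreducible maps into $P$ by Lemma \ref{leq2}(2), and $\rad P$ is a direct sum of two indecomposables exactly when two such maps exist (the Proposition stating that $\rad P$ is either indecomposable or a sum of two indecomposables), I would conclude $\rad P\cong\img\rho_i\oplus\img\xi_j$.

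Next I would apply Proposition \ref{Z} to each of the two sequences. Applied to $P_1\xrightarrow{\rho_1}\cdots\xrightarrow{\rho_{i-1}}P_i$ it yields a uniserial module $U_1$ with composition factors $S_1,\dots,S_i$ (in this order) and $\soc U_1=S_1$, together with an epimorphism $\Psi_i\colon P_i\to U_1$. As in the proof of Proposition \ref{Z}, this $\Psi_i$ factors through the corestriction $\overline{\rho_i}\colon P_i\to\img\rho_i$: since $\ker\rho_i$ has no composition factor $S_1$ (shown there via the $\Hom$-dimension count, using that $\img\rho_i$ is indecomposable and $S_1\subset\img\rho_i$ by Remark \ref{S1}), and $U_1$ is uniserial with socle $S_1$, one gets $\Hom_B(\ker\rho_i,U_1)=0$, hence $\ker\rho_i\subseteq\ker\Psi_i$ and an induced epimorphism $\varphi_i\colon\img\rho_i\to U_1$. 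The same argument on the second sequence produces a uniserial module $U_2$ with composition factors $R_1,\dots,R_j$ and an epimorphism $\varphi_j\colon\img\xi_j\to U_2$.

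I would then assemble $V$. Using $\rad P\cong\img\rho_i\oplus\img\xi_j$, the map $\varphi:=\varphi_i\oplus\varphi_j\colon\rad P\to U_1\oplus U_2$ is an epimorphism with kernel $K:=\ker\varphi_i\oplus\ker\varphi_j\subseteq\rad P$. Setting $V:=P/K$ gives a quotient of $P$, and since $K\subseteq\rad P$ the standard identity $\rad(P/K)=(\rad P)/K$ yields
\[
\rad V=(\rad P)/K\cong(\img\rho_i\oplus\img\xi_j)/(\ker\varphi_i\oplus\ker\varphi_j)\cong U_1\oplus U_2,
\]
which is exactly the asserted decomposition into two uniserials with the prescribed composition factors. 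Equivalently, one could obtain the same $V$ by pushing out $0\to\rad P\to P\to P/\rad P\to0$ along $\varphi$, matching the style of the earlier push-out constructions.

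The load-bearing steps are the two structural inputs rather than any computation: the decomposition $\rad P\cong\img\rho_i\oplus\img\xi_j$, which relies on the two sectional paths being genuinely distinct so that the images meet only in $0$, and the factorization of each $\Psi$ through its image, which is what lets the two uniserial ``legs'' glue along the direct sum. The closing radical computation is routine, so I expect the only real subtlety to lie in correctly extracting the image-factorizations from Proposition \ref{Z}.
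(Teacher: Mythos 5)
Your proof is correct and follows essentially the same route as the paper: it uses the decomposition $\rad P\cong\img\rho_i\oplus\img\xi_j$, the uniserial modules and epimorphisms $\varphi_i,\eta_j$ from Proposition \ref{Z}, and then forms $V$ as the quotient of $P$ by $\ker\varphi_i\oplus\ker\varphi_j$, which is exactly the paper's push-out of $0\to\rad P\to P\to P/\rad P\to 0$ along $[\varphi_i,\eta_j]^t$ made explicit. The extra details you supply (the factorization of $\Psi_i$ through $\img\rho_i$ and the justification that the two images intersect in $0$) are precisely the arguments the paper delegates to Proposition \ref{Z} and the preceding proposition on $\rad P$.
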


\begin{proof} Let $Z^{(i)}$ and $W^{(j)}$ be the uniserial modules and 
$\img\rho_i\xrightarrow{\varphi_i}Z^{(i)}$ and $\img\xi_i\xrightarrow{\eta_j}W^{(j)}$ epimorphisms as constructed in Proposition \ref{Z}. Since $\rad P\cong \img\rho_i\oplus \img\xi_j$ from Corollary \ref{summand}, using the push-out diagram, the module $V$ is defined:
$$\xymatrix@!C=0pt@!R=0pt{
&0&\ar[r]&&\rad P\ar[dd]^{[\varphi_{i-1},\eta_{j-1}]^t}&\ar[r]^a&&P\ar[dd]^{\Psi}&\ar[r]&&P/\rad P\ar@{=}[dd]&\ar[r]&&0\\
&&&&&&&&&&&&&&&&&&&&&&&&&&&&&&&&&&&&&&&&&&&&&&\\
&0&\ar[r]&&Z^{(i)}\oplus W^{(j)}&\ar[r]&&V&\ar[r]&&P/\rad P&\ar[r]&&0. &&
}$$
\end{proof}

\begin{cor} Let $M$ be a module and $f: P\to M$ a non-zero morphism. Let 
\[
\begin{array}{ccccccccccc}
P_1 & \xrightarrow{\rho_1} & P_2 & \xrightarrow{\rho_2} & \dots & \xrightarrow{\rho_{i-2}} & P_{i-1}& \xrightarrow{\rho_{i-1}} & P_i & \xrightarrow{\rho_{i}} & P\\
Q_1 & \xrightarrow{\xi_1} & Q_2 & \xrightarrow{\xi_2}& \dots & \xrightarrow{\xi_{j-2}}& Q_{j-1}& \xrightarrow{\xi_{j-1}} & Q_j & \xrightarrow{\xi_{j}} & P
\end{array}
\]
be two maximal sectional sequences of projectively irreducible maps such that $f\rho_i\dots\rho_1\neq0$
and $f\xi_j\dots\xi_1\neq0$. Then:
\begin{enumerate}
\item  $\img f\cong V$ where $V$ is a quotient of $P$ and  $\rad V\cong U_1\oplus U_2$ where $U_1$ and $U_2$ are uniserial modules with composition factors
$S_1,S_2,\dots,S_{i}$ with $S_t=P_t/\rad P_t$ for $t=1,\dots, i$ 
and $R_1,R_2,\dots,R_{j}$ where $R_t=Q_t/\rad Q_t$ for $t=1,\dots,j$. 
\item $\soc \img f\cong S_1\oplus R_1$.
\item If there is only one sectional sequence of projectively irreducible maps, then $\img f$ is uniserial and $\soc \img f\cong S_1$.
\end{enumerate}
\end{cor}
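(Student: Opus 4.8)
The plan is to identify $\img f$ with the module $V$ produced in the preceding corollary, and then to read off (2) and (3) from the uniserial structure of $U_1=Z^{(i)}$ and $U_2=W^{(j)}$. First I would record the elementary structure of $\img f$: since $P$ is an indecomposable projective it is cyclic, say $P=Bx$ with $x\notin\rad P$, and $f\neq 0$ forces $f(x)\neq 0$; hence $\img f=Bf(x)$ is a cyclic quotient $P/\ker f$ with $\ker f\subseteq\rad P$ and simple top $S=P/\rad P$. Consequently $\rad(\img f)=f(\rad P)=f(\img\rho_i)+f(\img\xi_j)$, using $\rad P\cong\img\rho_i\oplus\img\xi_j$ from Corollary~\ref{summand} together with the proposition on the shape of $\rad P$.

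Next I would analyse $f$ on each radical summand separately. Consider $f|_{\img\rho_i}$. By Remark~\ref{S1} we have $S_1\subseteq\img\rho_i$, and the hypothesis $f\rho_i\cdots\rho_1\neq 0$ says precisely that $f$ does not annihilate this copy of $S_1$ (equivalently $\Hom_B(P_1,\img f)\neq 0$, so $S_1$ is a composition factor of $\img f$ by Remark~\ref{simples}). Since $Z^{(i)}=\img\rho_i/\ker\varphi_i$ is uniserial with socle $S_1$ by Proposition~\ref{Z}, and every simple occurs with multiplicity at most one in an indecomposable (the multiplicity-free lemma), I would argue that the image of $f|_{\img\rho_i}$ is exactly the uniserial module $Z^{(i)}$ with composition factors $S_1,\dots,S_i$, and symmetrically that $f|_{\img\xi_j}$ has image $W^{(j)}$ with factors $R_1,\dots,R_j$.

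I would then show the two images are independent in $M$, i.e. $f(\img\rho_i)\cap f(\img\xi_j)=0$: a nonzero intersection would contain a simple module lying in both uniserial images, whose projective cover would have to sit on both sectional paths to $P$ --- impossible for two \emph{distinct} maximal sectional paths, exactly as in the proof that $\rad P=\img\rho_i\oplus\img\xi_j$. Hence $\rad(\img f)\cong Z^{(i)}\oplus W^{(j)}=\rad V$; since $\img f$ and $V$ are quotients of $P$ with the same simple top $S$ sitting over this radical, comparing kernels inside $\rad P$ gives $\ker f=\ker\varphi_i\oplus\ker\eta_j=\ker\Psi$, so $\img f\cong V$, which is (1). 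For (2), since $\rad V\neq 0$ the generating top $S$ does not lie in the socle, so $\soc\img f=\soc(\rad V)=\soc Z^{(i)}\oplus\soc W^{(j)}=S_1\oplus R_1$. For (3), a single maximal sectional sequence makes $\rad P$ indecomposable, whence $\rad V=Z^{(i)}$ is uniserial and $\img f$ is a uniserial extension of $S$ by $Z^{(i)}$ with $\soc\img f\cong S_1$.

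The main obstacle is the middle step: proving that $f$ carries each radical leg \emph{isomorphically} onto the prescribed uniserial module $Z^{(i)}$ (resp. $W^{(j)}$), and that the two images meet trivially. Both facts rest on indecomposables here being string modules with multiplicity-free composition factors and on the distinctness of the two maximal sectional paths; once the radical of $\img f$ is pinned down as $Z^{(i)}\oplus W^{(j)}$, the identification $\img f\cong V$ and the computation of the socle in (2) and (3) are purely formal.
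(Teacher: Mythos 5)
The paper itself offers no written proof of this corollary (it is presented as an immediate consequence of Proposition~\ref{Z} and the corollary before it), so your attempt has to be judged on its own merits. Your skeleton is the natural one: $\img f$ is a local quotient of $P$, $\rad(\img f)=f(\img\rho_i)+f(\img\xi_j)$, identify each leg image with $Z^{(i)}$ resp.\ $W^{(j)}$, show the two images meet trivially, and then (1)--(3) follow formally. The genuine gap is in the step you yourself flag as the main obstacle: the claim that $f(\img\rho_i)\cong Z^{(i)}$ is asserted rather than proved, and --- crucially --- the hypothesis that the two sectional sequences are \emph{maximal} is never used anywhere in your argument. The ingredients you cite (Remark~\ref{S1}, ``$f$ does not annihilate $S_1$'', multiplicity-freeness) cannot imply the identification, as the paper's own Example~\ref{E:category_frieze} shows: take $P=P_3$, $f=\mathrm{id}_{P_3}$, and the length-one sectional sequence $\rho\colon P_1\to P_3$. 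Here $Z^{(1)}=S_1$, while $\img\rho$ is the uniserial module with composition factors $S_1,S_5,S_{10}$ (top to socle), so $f(\img\rho)=\img\rho\not\cong Z^{(1)}$ and $S_1$ is not even a submodule of $\img\rho$, only its top --- yet every fact you invoke holds. This sequence is of course not maximal (it prolongs to $P_{10}\to P_5\to P_1\to P_3$), but since your argument makes no appeal to maximality it would apply verbatim to it and ``prove'' a false statement.

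What is missing, and what is the actual content of the corollary, is that maximality forces $\soc(\img\rho_i)\cong S_1$; equivalently $\img(\rho_i\cdots\rho_1)=S_1$ and the epimorphism $\varphi_i\colon\img\rho_i\to Z^{(i)}$ of Proposition~\ref{Z} is an isomorphism. Indeed, if the uniserial summand $\img\rho_i$ of $\rad P$ had a composition factor $S_v$ strictly below $S_1$, then the projectively irreducible map $P_v\to P_1$ would prolong the sectional sequence, contradicting maximality (in the example above this is exactly how $P_1\to P_3$ extends to $P_{10}\to P_5\to P_1\to P_3$). Under the alternative reading in which the sequences are only maximal subject to $f\rho_i\cdots\rho_1\neq 0$, the same point resurfaces in dual form: maximality is what shows that $f$ kills everything in $\img\rho_i$ strictly below the $S_1$-level, so that $\ker(f|_{\img\rho_i})=\ker\varphi_i$; your argument only establishes the other containment (that $S_1,\dots,S_i$ survive). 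Once this maximality step is supplied, your remaining steps --- injectivity of $f$ on each uniserial leg, the disjointness of the two images, the kernel comparison giving $\img f\cong V$, the socle computation in (2), and case (3) --- do go through essentially as you wrote them.
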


\begin{lemma} Let 
$P_1\xrightarrow{\rho_1}P_2\xrightarrow{\rho_2}\dots 
\xrightarrow{\rho_{i-1}}P_i\xrightarrow{\rho_{i}}P$ and  
$Q_1\xrightarrow{\xi_1}Q_2\xrightarrow{\xi_2}\dots 
\xrightarrow{\xi_{j-1}}Q_j\xrightarrow{\xi_{j}}Q$ 
be two  sectional sequences of projectively irreducible maps. If there is a common projectively irreducible map, then either $\Hom_B(P,Q)\neq0$ or $\Hom_B(Q,P)\neq0$.
\end{lemma}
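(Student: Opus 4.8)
The plan is to reduce the statement to two facts already available: that sectional paths in the Auslander--Reiten quiver of type $A_n$ are genuine straight lines (unique continuation once the incoming arrow is fixed), and that compositions of projectively irreducible maps along a sectional sequence never vanish, as recorded in Remark \ref{S1}.

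First I would unwind the hypothesis. Saying that the two sequences share a common projectively irreducible map means $\rho_a=\xi_b$ as morphisms for some indices $a,b$; comparing domains and codomains forces $P_a=Q_b$ and $P_{a+1}=Q_{b+1}$, so both sequences traverse the same arrow $P_a\to P_{a+1}$ of the AR quiver.

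The central step is to show that, from this shared arrow onward, the two sequences coincide. This is where the combinatorics of type $A$ enters: the sectional condition $\tau M_{i+1}\neq M_{i-1}$ forbids ``turning back'' at each interior vertex, so once the incoming arrow at a vertex is fixed there is a unique admissible outgoing arrow. Hence the maximal sectional path (a single diagonal) through the arrow $P_a\to P_{a+1}$ is unique, and both sequences lie on it from $P_a$ onward. Moreover, a projectively irreducible map always links a projective to the nearest projective along a sectional path, by Lemma \ref{leq2}(2), so neither sequence can bypass a projective lying on this diagonal. Consequently $P_{a+k}=Q_{b+k}$ for every $k$ until one of the two sequences terminates.

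Finally I would conclude. Both endpoints $P$ and $Q$ then lie on this common diagonal, further along than $P_{a+1}=Q_{b+1}$ in the direction of the maps, so one of them precedes the other. If $P$ precedes $Q$, then $P$ occurs as a term $Q_{b'}$ of the second sequence (it cannot be skipped, again by Lemma \ref{leq2}), and $Q_{b'}\xrightarrow{\xi_{b'}}\cdots\xrightarrow{\xi_j}Q$ is once more a sectional sequence of projectively irreducible maps. By Remark \ref{S1} its composite $\xi_j\cdots\xi_{b'}$ is nonzero, exhibiting a nonzero element of $\Hom_B(P,Q)$. The symmetric case, $Q$ preceding $P$, yields $\Hom_B(Q,P)\neq 0$ in the same way, and the degenerate case $P=Q$ is trivial. (As a sanity check, one can instead invoke Corollary \ref{supports}(1): once $Q$ lies on the forward sectional diagonal from $P$, it falls in the maximal rectangle starting at $P$, i.e.\ the support of $\Hom(P,-)$.)

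The hard part is the central step, namely upgrading ``one shared map'' to ``coincidence along the whole forward tail''. It rests precisely on the straight-line structure of sectional paths in type $A$, which gives unique continuation, together with Lemma \ref{leq2} to ensure that no projective on the diagonal is omitted by either sequence; granting these, the conclusion is formal via Remark \ref{S1}.
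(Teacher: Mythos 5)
Your proof is correct and follows essentially the same route as the paper's own (very terse) argument: the shared map forces both sequences onto the same sectional path, after which one endpoint precedes the other and the nonvanishing composite (Remark \ref{S1}, or equivalently the Hom-rectangle of Corollary \ref{supports}) gives the nonzero morphism. You have merely filled in the two steps the paper leaves implicit — unique sectional continuation in type $A$ and the fact that projectively irreducible maps connect consecutive projectives on the path (Lemma \ref{leq2}) — which is a faithful expansion rather than a different approach.
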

\begin{proof} If there is a common projectively irreducible map (up to constant) then the sequences are on the same sectional path and the result follows.
\end{proof}

\begin{lemma}\label{lm:simple-image}
Let $M$ be indecomposable, let $f: P\to M$ and $g:Q\to M$ be summands of the projective cover of $M$. 
Assume $P\not\cong Q$. If $\img f\cap \img g\neq0$ then 
$\img f\cap \img g$ is a simple module.
\end{lemma}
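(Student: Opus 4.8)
The plan is to argue entirely on the level of supports of string modules. Since $M$ is an indecomposable module over a cluster-tilted algebra of type $A_n$, it is a string module, so every submodule is spanned by a subset of the fixed string basis, such a subset spans a submodule exactly when it is closed under the arrow action, and consequently the support of an intersection of two submodules is the intersection of their supports. I would first record that $\img f$ and $\img g$ are submodules of $M$ whose shape is pinned down by the corollaries above: each is a cyclic module with simple top (the top $S_s$ of $P$, resp. $S_t$ of $Q$) whose radical is a sum of at most two uniserials. Hence each is a \emph{tent}: a connected string with a single source — $s$ for $\img f$ (so $P\cong P_s$) and $t$ for $\img g$ — together with two descending legs. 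In particular $\supp\img f$ and $\supp\img g$ are intervals of the string, and $s\neq t$ because $P\not\cong Q$.

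Next I would put $N=\img f\cap\img g$ and $I=\supp N=\supp\img f\cap\supp\img g$, a subinterval that is nonempty since $N\neq0$; the heart of the matter is to show $|I|=1$. The first point is that $s\notin I$ and $t\notin I$: if $s\in I\subseteq\supp\img g$ then $s$ would be a second source inside the single-source tent $\img g$, forcing $s=t$, a contradiction, and symmetrically for $t$. The second point is that $s$ and $t$ lie on opposite sides of the interval $I$ along the string: were they on the same side, the one farther from $I$ would lie inside the support of the tent built on the nearer one, again contradicting that a tent has a unique source.

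Finally I would exploit orientations. Because $I$ avoids the source $s$ and therefore lies in a single leg of $\img f$, every edge of $I$ is oriented away from $s$; likewise every edge of $I$ is oriented away from $t$. Since $s$ and $t$ sit on opposite sides of $I$, ``away from $s$'' and ``away from $t$'' are opposite directions along $I$, so $I$ can contain no edge at all. Thus $I$ is a single vertex and $N$ is simple.

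I expect the main obstacle to be the bookkeeping that converts the homological input of the corollaries (simple top, radical a sum of at most two uniserials) into the clean combinatorial picture of a single-source tent with interval support, and in particular making the opposite-sides step airtight; once those are secured, the orientation contradiction is immediate. An essentially equivalent alternative would be to first show, using that $\soc M$ is multiplicity-free, that $\soc N=\soc\img f\cap\soc\img g$ is simple — two tents with equal socle share support, hence coincide, forcing $s=t$ — and then to observe that a non-simple $N$ with simple socle $S_w$ would extend past $w$ along a direction common to a leg of each tent, once more forcing $s=t$.
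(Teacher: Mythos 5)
Your proof is correct in substance, but it takes a genuinely different route from the paper's. The paper never fixes a basis: it reads off $\soc\img f\cong S_1\oplus S_1'$ and $\soc\img g\cong R_1\oplus R_1'$ from the starting terms of the maximal sectional sequences of projectively irreducible maps ending at $P$ and $Q$, and then argues inside the AR quiver --- if the two socles coincided, one would need four distinct sectional paths inside the rectangle supporting $\Hom_B(-,M)$, which is impossible; so the socles share exactly one simple, and $\img f\cap\img g$ is that common simple. Your argument is instead pure string combinatorics: the images of the projective-cover summands are tents (arrow-closures of peaks), their supports are intervals, the support of the intersection is an interval avoiding both apexes and pinched between them, and the opposite orientations of the edges along the two legs covering that interval force it to be a single vertex. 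What your route buys is a more elementary, self-contained argument that moreover locates the intersection precisely (a single string vertex strictly between the two tops); what the paper's route buys is that it stays entirely within the sectional-path/Hom-hammock formalism already developed in the section, with no basis bookkeeping. Note that your closing ``essentially equivalent alternative'' via socles is, in effect, the paper's own proof.

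Two small repairs are needed. First, your opposite-sides step has the containment backwards: if $s$ and $t$ were on the same side of $I$, it is the tent built on the \emph{farther} apex that contains the whole segment from its apex to $I$ and hence contains the \emph{nearer} apex as a second source (not the farther apex lying in the nearer tent, which need not hold); the contradiction with uniqueness of the source is then the same. Second, ``every submodule of a string module is spanned by a subset of the string basis'' is not a property of string modules in general --- when a string revisits a vertex, $e_vM$ has dimension $\geq 2$ and diagonal submodules exist. It holds here because every $e_vM$ is at most one-dimensional, i.e.\ by the multiplicity-one lemma for composition factors proved just before this lemma in the paper; you should invoke that lemma rather than string-ness alone.
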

\begin{proof} 
Consider maximal sectional sequences of projectively irreducible maps to $P$ and $Q$ composed with $f$ and $g$ inside the support of $\Hom_B(- ,M)$:\\
$P_1\xrightarrow{\rho_1}P_2\xrightarrow{\rho_2}\dots 
\xrightarrow{\rho_{i-1}}P_i\xrightarrow{\rho_{i}}P\xrightarrow{f}M\ \ \ \ \ $ and 
$\ \ \ \ \ \ P'_1\xrightarrow{\rho'_1}P'_2\xrightarrow{\rho'_2}\dots 
\xrightarrow{\rho'_{i'-1}}P'_{i'}\xrightarrow{\rho'_{i'}}P\xrightarrow{f}M$,  \\ 
$Q_1\xrightarrow{\xi_1}Q_2\xrightarrow{\xi_2}\dots 
\xrightarrow{\xi_{j-1}}Q_j\xrightarrow{\xi_{j}}Q\xrightarrow{g}M\ \ \ \ \ $ 
and
$\ \ \ \ \ Q'_1\xrightarrow{\xi'_1}Q'_2\xrightarrow{\xi'_2}\dots 
\xrightarrow{\xi'_{j'-1}}Q'_{j'}\xrightarrow{\xi'_{j'}}Q\xrightarrow{g}M$.  \\
Then $\soc\img f\cong S_1\oplus S'_1$ and $\soc \img g\cong R_1\oplus R'_1$.  If $\img f\cap \img g\neq0$ then there are two cases:\\
\underline{Case (1):} $\soc\img f =  \soc\img g$ in which case $S_1\cong R_1$ and $S'_1\cong R'_1$. Since $P\not\cong Q$ this would require four distinct sectional paths within the rectangle of support of $\Hom_B(- ,M)$ which is impossible.\\
\underline{Case (2):} $S_1\cong R_1$ and $S'_1\not\cong R'_1$. Since $P\not\cong Q$ and  they 
are both part of projective cover of $M$ 
it follows that $\Hom_B(P,Q)=0$ or $\Hom_B(Q,P)=0$ and therefore the two sectional 
paths from $S_1\cong R_1$ must be different. Hence $\img f\cap \img g\cong S_1\cong R_1$ which is simple.
\end{proof}

\begin{Construction}\label{Construction:mods}
{\bf Procedure for describing modules.} \label{procedure} Let $\mathcal C$ be generalized cluster category of type $A_n$ and let $B$ be the corresponding cluster tilted algebra. Let $M$ be an indecomposable $B$-module. Then the structure of $M$ can be described in the following way using the AR-quiver:
\begin{enumerate}
\item  Consider the rectangle in the AR-quiver ending at the point corresponding to $M$, i.e. this is the support of $\Hom_{\mathcal C}(\ ,M)\supset \Hom_{B}(\ ,M)$.
\item  Consider all indecomposable projective $B$-modules which appear in this rectangle 
\footnote{Note that all projectives in $\Hom_{\mathcal C}(-,M)$ are also in $\Hom_B(-,M)$.}. 
\item Chose one of the two directions of the sectional paths.
\item With the chosen direction, find the first sectional path within support of $\Hom_{\mathcal C}(\ ,M)\supset \Hom_{B}(\ ,M)$ which contains any projective $B$-modules (order on the sectional paths is given by  $t$ if the path passes through $\tau^t M$. 
\item  Let $\ \ P_{1,1}\xrightarrow{\rho_{1,1}}P_{1,2}\xrightarrow{\rho_{1,2}}\dots 
\xrightarrow{}P_{1,k_1-1}\xrightarrow{\rho_{1,k_1-1}}P_{1,k_1}\xrightarrow{\rho_{1,k_1}}P_{2,1}\ \ $
 be a maximal sequence of projectively irreducible maps on this sectional path.

\item Consider the second sectional path to $P_{2,1}$ (if it exists).\\
Let $\ \ P_{3,1}\xrightarrow{\rho'_{2,k_2}}P_{2,k_2}\xrightarrow{\rho'_{2,k_2-1}}\dots 
\xrightarrow{}P_{2,3}\xrightarrow{\rho'_{2,2}}P_{2,2}\xrightarrow{\rho'_{2,1}}P_{2,1}\ \ $
 be a maximal sequence of projectively irreducible maps on this sectional path.
\item Consider the second sectional path out of $P_{3,1}$ (if it exists).\\
 Let $\ \ P_{3,1}\xrightarrow{\rho_{3,1}}P_{3,2}\xrightarrow{\rho_{3,2}}\dots 
\xrightarrow{}P_{3,k_3-1}\xrightarrow{\rho_{3,k_3-1}}P_{3,k_3}\xrightarrow{\rho_{3,k_3}}P_{4,1}\ \ $
 be a maximal sequence of projectively irreducible maps on this sectional path.
\item Continue this way until there are no more projective $B$-modules in the support of 
$\Hom_{\mathcal C}(\ ,M)\supset \Hom_{B}(\ ,M)$.
This procedure must stop since there are only finitely many projectives and each projective can appear only once by Proposition \ref{dimHom1}.
\item \label{sequences} At the end, one obtains the following maximal sequences of projectively irreducible maps for even $s\in \{2,4,\dots r\}$,
in two directions along the AR-quiver: \\
$\ \ P_{s-1,1}\xrightarrow{\rho_{s-1,1}}P_{s-1,2}\xrightarrow{\rho_{s-1,2}}\dots 
\xrightarrow{}P_{s-1,k_{s-1}}\xrightarrow{\rho_{s-1,k_{s-1}}}P_{s,1}\  \  (*)_s$ and \\
$\ \ P_{s+1,1}\xrightarrow{\rho'_{s,k_s}}P_{s,k_s}\xrightarrow{\rho'_{s,k_s-1}}\dots 
\xrightarrow{}P_{s,3}\xrightarrow{\rho'_{s,2}}P_{s,2}\xrightarrow{\rho'_{s,1}}P_{s,1}\  \  (**)_s$.
\item It is possible for $k_1=0$ and/or $k_r=0$.
\end{enumerate}
\end{Construction}

\begin{theorem} Let $\mathcal C$ be generalized cluster category of type $A_n$ and let $B$ be the corresponding cluster tilted algebra. Let $M$ be an indecomposable $B$-module. Then:
\begin{enumerate}
\item  $M$ is a string module with the composition factors appearing exactly as the projective modules appear in the projectively irreducible sequences   in the Construction \ref{procedure}.(\ref{sequences}).
\item $M/\rad M\cong S_{2,1}\oplus S_{4,1}\oplus\dots\oplus S_{r,1}$.
\item $\soc M\cong S_{1,1}\oplus S_{3,1}\oplus\dots \oplus S_{r+1,1}$, where $S_{1,1}$ and $S_{r+1,1}$ may or may not be there.
\end{enumerate}
\end{theorem}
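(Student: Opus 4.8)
The plan is to realise $M$ as the union inside $M$ of the images of the \emph{peak projectives} $P_{2,1},P_{4,1},\dots,P_{r,1}$ produced by Construction~\ref{procedure}, glued along their common simple socles. Since every indecomposable module over a cluster-tilted algebra of type $A_n$ is a string module (\cite{ASCJP,BR}), $M$ is already known to be a string module, so the content of part~(1) is to pin down \emph{which} string, and parts~(2) and~(3) will then be read off from its top and socle.

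First I would analyse each peak separately. Fix an even $s$ and the unique (up to scalar) nonzero map $f_s\colon P_{s,1}\to M$. By Lemma~\ref{leq2}(2) there are at most two projectively irreducible maps into $P_{s,1}$, so $P_{s,1}$ is the common endpoint of the (at most two) maximal sectional sequences $(*)_s$ and $(**)_s$ of the construction. Applying the Corollary describing $\img f$ for a map $f\colon P\to M$ out of a projective, I obtain that $\img f_s$ is a module $V_s$ with top $S_{s,1}$ whose radical is the direct sum of the two uniserial modules of Proposition~\ref{Z} attached to $(*)_s$ and $(**)_s$; by Remark~\ref{simples} and Proposition~\ref{dimHom1} the composition factors of these uniserials are exactly the simple tops of the projectives along those sequences, each of multiplicity one, descending to the socles $S_{s-1,1}$ and $S_{s+1,1}$. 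When $k_1=0$ or $k_r=0$ the corresponding sequence is trivial, so $V_2$, respectively $V_r$, is uniserial on the remaining side.

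Next I would glue. For two distinct peaks the images $\img f_s$ and $\img f_{s'}$ are images of summands of the projective cover of $M$, so by Lemma~\ref{lm:simple-image} their intersection is either zero or a single simple module; comparing socles shows that consecutive peaks $P_{s,1}$ and $P_{s+2,1}$ meet exactly in the shared valley socle $S_{s+1,1}$, while non-consecutive peaks meet in zero. Hence $\sum_{s\text{ even}}\img f_s$ is precisely the zigzag string module obtained by identifying the uniserial legs of the $V_s$ along the valley simples, with composition factors occurring in the order dictated by the sequences $(*)_s,(**)_s$; this proves~(1) once I verify that these images exhaust $M$. For that it suffices to see that the peak projectives constitute the projective cover of $M$: their tops $S_{s,1}$ are pairwise distinct and, since the gluing is carried out along socles, they remain in the top and hence span $M/\rad M$, while by Corollary~\ref{supports} together with Construction~\ref{procedure} every projective in the support of $\Hom_B(-,M)$ has already been accounted for along the sequences, so no further generator can occur. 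Part~(2) is then the statement that $M/\rad M$ is the sum of the peak tops, and part~(3) follows by collecting the socles $\soc V_s=S_{s-1,1}\oplus S_{s+1,1}$ and identifying them in pairs at the internal valleys; the extremal valley simples $S_{1,1}$ and $S_{r+1,1}$ survive precisely when $k_1\neq 0$, respectively $k_r\neq 0$.

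The main obstacle I anticipate is the gluing step, specifically verifying that $\sum_{s\text{ even}}\img f_s$ equals all of $M$ rather than a proper submodule: this amounts to showing that the peak projectives really form the full projective cover and that Lemma~\ref{lm:simple-image} forces the overlaps to be \emph{exactly} the single valley simples (neither larger nor absent), so that the zigzag is connected and closes up onto $M$. Everything else is bookkeeping with the rectangular supports of Corollary~\ref{supports} and the uniserial building blocks of Proposition~\ref{Z}.
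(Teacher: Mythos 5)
Your proposal is correct in substance, but it takes a genuinely different route from the paper. The paper's proof is ``top--down'': it asserts that Construction~\ref{procedure} directly yields an explicit projective presentation
\[
P_{0,1}\oplus P_{3,1}\oplus\dots\oplus P_{r+1,1}\xrightarrow{\;g\;} P_{2,1}\oplus P_{4,1}\oplus\dots\oplus P_{r,1}\longrightarrow M\longrightarrow 0,
\]
in which the peak projectives are the generators, the valley projectives (extended one further step along the sectional path at the two ends, via $\rho_{1,0}$ and $\rho'_{r,k_r+1}$) encode the relations, and all three statements are then read off from the cokernel. Your proof is ``bottom--up'': you realize $M$ as $\sum_{s\text{ even}}\img f_s$, with each image described by the corollary to Proposition~\ref{Z} and the gluing controlled by Lemma~\ref{lm:simple-image} (which the paper's own proof never invokes). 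What the paper's route buys is that the cut-off of the extremal legs and the identifications at internal valleys are packaged once and for all in the single map $g$; what your route buys is a direct grip on the submodule structure of $M$ in terms of the uniserial pieces $V_s$, which is closer to what Section~\ref{S:Number of submodules} actually uses afterwards.

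Two steps in your sketch must be reordered and supplemented to avoid circularity, though both repairs use only results already in the paper. First, Lemma~\ref{lm:simple-image} assumes its two maps are summands of the projective cover, so you must prove ``every cover summand is a peak'' \emph{before} gluing, not alongside it; the missing argument is a factoring one: any non-peak projective $P_{s,j}$ in the rectangle has a unique-up-to-scalar nonzero map to $M$ (Proposition~\ref{dimHom1}), which by Remark~\ref{S1} agrees with the composition through the peak at the end of its sectional sequence, hence has image inside $f_{s'}(\rad P_{s',1})\subseteq\rad M$. This also gives exhaustion for free: $M$ is the sum of the images of its cover summands, which is contained in $\sum_s\img f_s\subseteq M$, so you never need to argue separately that the peak tops ``span $M/\rad M$''. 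Second, comparing socles only shows that a \emph{nonzero} intersection of consecutive images is the shared valley simple; that the intersection is actually nonzero (so the zigzag is connected) requires the multiplicity-one lemma: $S_{s+1,1}$ occurs exactly once in $M$, and it is a simple submodule of both $\img f_s$ and $\img f_{s+2}$, forcing the two copies to coincide. With these two fixes your argument is complete, and at a level of detail at least comparable to the paper's own proof, which simply asserts the presentation.
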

\begin{proof} Follows from the construction which defines projective presentation of $M$ as:
$$P_{0,1}\oplus P_{3,1}\oplus\dots\oplus P_{r+1,1}\xrightarrow{g} P_{2,1}\oplus P_{4,1}\oplus\dots\oplus P_{r,1} \ \ \ \text{where:}$$
$P_{0,1}\xrightarrow{g_1}P_{2,1}$ with $g_1= (\rho_{1,k_1}\dots\rho_{1,1})\rho_{1,0}$ is the composition of projectively irreducible maps as in $(*)_2$ with additional map $\rho_{1,0}$ on the same sectional path,\\
$P_{r+1,1}\xrightarrow{g'_r}P_{r,1}$ with $g'_r= (\rho'_{r,1}\dots\rho'_{r,k_r}) \rho'_{r,k_r+1}$ is the composition  of projectively irreducible maps as in $(**)_r$ with additional map $\rho'_{r,k_r+1}$ on the same sectional path,\\
$P_{t,1}\xrightarrow{(g'_{t-1},g_t)}P_{t-1,1}\oplus P_{t+1,1}$ with 
$g'_{t-1}= (\rho'_{t-1,1}\dots\rho'_{t-1,k_{t-1}})$ 
is the composition of projectively irreducible maps as in $(**)_{t-1}$, and 
$g_t= (\rho_{t,k_t}\dots\rho_{t,1})$ is the composition of projectively irreducible maps as in $(*)_{t+1}$, for  $t\in\{3,5,\dots, r-1\}$.\\
All other maps are zero.

\end{proof}

\begin{ex}
Let $B=\End_{\mc{C}_{A_{10}}}(T)$ be the cluster-tilted algebra from Example \ref{E:category_frieze}. 
Let us illustrate the method of Construction \ref{Construction:mods} for describing the decomposition factors of some modules $M$ over this algebra: take for example $M=\begin{smallmatrix}\;3\;\\8\;1\end{smallmatrix}$. 
(See Figure~\ref{fig:ARquiver} for the position of the modules in the AR quiver.)
From Cor.~\ref{supports} it follows that the projectives $P_1, P_3$ and $P_8$ 
are in the support of $\Hom_{\mc{C}}(-,M)$, which is given as the maximal rectangle 
ending in $M$. If we choose the sectional path ending at $M$ and coming south-east from 
$P_8$, we find that $P_8 \rightarrow P_3$ is a maximal sequence of projectively irreducible 
maps on this path. In step (6) of the above construction we get $P_1 \rightarrow P_3$. 
Since there are no more indecomposable projectives in $\supp(\Hom_{\mc{C}}(-,M))$, one sees 
that $M$ is of the form $\begin{smallmatrix}\;3\;\\8\;1\end{smallmatrix}$. If we had chosen 
the other sectional path from $P_4[1]$ to $M$, then there is no projective on this path. 
In step (5) we consider the parallel maximal sectional paths through the $\tau^t M$. Then 
the first projective is $P_1$ on the north-east path through 
$\tau^3 M=\begin{smallmatrix}\;1\;\\5\;2\;\\10\;\phantom{00}\end{smallmatrix}$. 
In step (6) nothing is added but in (7) we see that $P_3$ is on the north-east path out of $P_1$. 
Finally, $P_8$ lies on the other sectional path to $P_3$ and we have recovered the same $M$. \\
For another example, consider $M=P_7=\begin{smallmatrix}7\\2\\3\\8\end{smallmatrix}$ $\,$.
The support of $\Hom{\mc{C}}(-,M)$ consists of the modules on the line from $P_7$ to $P_8=8$. 
Since the projectives on this path are $P_8 \rightarrow P_3 \rightarrow P_2 \rightarrow P_7$, 
one immediately sees that $P_7$ is uniserial.
\end{ex}

\section{From cluster categories to friezes}\label{S:From cluster categories to friezes}

\subsection{The specialized Caldero Chapoton map}\label{ssec:spec-CC}
$\ $

Let $\CC = \CC_{(Q,W)}$ be a generalized cluster category and let $T = \bigoplus_{i=1}^n T_i$ be a cluster-tilting object of $\CC$ with pairwise non-isomorphic indecomposable summands $T_i$. As before, let $B_T = \End_\CC (T)$  denote the cluster-tilted algebra associated to $T$. As we have seen in 
Section~\ref{S:Description of the modules} every indecomposable object in $\CC$ is either $T_i[1]$ for some $1 \leq i \leq n$ or it can be viewed as 
an indecomposable $B_T$-module. 

The {\em specialized Caldero Chapoton map} is the map we get from postcomposing the 
Caldero Chapoton map associated to $T$ with the specialization of the initial cluster variables to one. 
The Caldero Chapoton map was introduced by Caldero and Chapoton \cite{CalderoChapoton} for 
the acyclic case 
and in a much more general setting for 2-Calabi-Yau triangulated categories with
cluster-tilting objects by Palu \cite{Palu}. 
It was extended to Frobenius categories 
by Fu and Keller in \cite{FuKeller}. See also Section~\ref{ssec:frobenius} below. 
The specialized Caldero Chapoton map is defined on indecomposable objects of $\CC$ by
\[
 \rho_T(M) = \begin{cases}
              1 & \text{ if } M = T_i[1] \\
              \sum_{\underline{e}}\chi(Gr_{\underline{e}}(M)) & \text{ if $M$ is a $B_T$-module}.
             \end{cases}
\]
Here, $Gr_{\underline{e}}(M)$ is the Grassmannian of submodules of the $B_T$-module $M$ with dimension vector $\underline{e}$ and $\chi$ is the Euler-Poincar\'e characteristic.
If $\CC$ is of type $A_n$ and $T$ is a cluster-tilting object of $\CC$, then for every indecomposable $B_T$-module $M$ the Grassmannian $Gr_{\underline{e}}(M)$ is either empty or a point, 
therefore the above formula simplifies to
\[
 \rho_T(M) = \sum_{N \subseteq M} 1 = s(M),
\]
where the sum goes over submodules of $M$ and we denote by $s(M)$ the number of submodules of $M$ up to isomorphism 
(cf. \cite[Example~3.2]{CalderoChapoton}).  

\subsection{Friezes via the specialized Caldero Chapoton map}\label{S:Friezes-via-CC}
$\ $

Let $T = \bigoplus_{i=1}^n T_i$ be a cluster-tilting object in the cluster category $\CC_{A_n}$ of type $A_n$ with pairwise non-isomorphic indecomposable objects 
$T_i$. The {\em frieze associated to $T$} is the frieze we obtain in the following way: We take the Auslander-Reiten quiver of $\CC_{A_n}$ and put in the 
position of the indecomposable object $M$ the positive integer $\rho_T(M)$. Then we add rows of $0$s and $1$s at the top and bottom such that the 
first and last row are rows of $0$s and the second and second-to-last row are rows of $1$s. This is indeed a frieze by \cite[Proposition~5.2]{CalderoChapoton}. 
Note that while \cite[Proposition~5.2]{CalderoChapoton} only shows the statement for cluster-tilting subcategories whose quivers are an orientation of $A_n$, 
the proof can be adapted to include the other quivers in the mutation class of an orientation of $A_n$. Alternatively, the statement follows from the much more 
general result \cite[Theorem~5.4]{HJ}. 
We will see in the next section how to extend the frieze patterns to include the rows of $1$s at 
top and bottom. We do so by using an exact category and the variant of Palu's cluster character 
defined in \cite{FuKeller}, this allows us  to extend $\rho_T$ to the Frobenius 
category. 

\begin{ex}\label{E:category_frieze}

Consider the cluster category $\CC_{A_{10}}$. Its Auslander-Reiten quiver is the quotient of the Auslander-Reiten quiver of $D^b(kA_{10})$ by the action of $\tau^{-1}[1]$, a fundamental domain for which is depicted in black below. We pick the cluster tilting object $T = \bigoplus_{i=1}^{10} T_i$ whose indecomposable summands are marked with circles:

\vskip-2ex
\begin{center}
\begin{tikzpicture}[scale=0.6, transform shape]

	\node[lightgray] (Z1) at (0,10) {$\bullet$};
	\node[lightgray, circle, draw] (Z2) at (2,10) {$\bullet$};
	\node[lightgray] (Z3) at (4,10) {$\bullet$};
	\node[lightgray, circle, draw] (Z4) at (6,10) {$\bullet$};
	\node[lightgray] (Z5) at (8,10) {$\bullet$};
	\node[circle, draw] (Z6) at (10,10) {$\bullet$};
	\node (Z7) at (12,10) {$\bullet$};
	\node[lightgray] (Z8) at (14,10) {$\bullet$};
	\node[lightgray, circle, draw] (Z9) at (16,10) {$\bullet$};
	\node[lightgray] (Z10) at (18,10) {$\bullet$};
	\node[lightgray, circle, draw] (Z11) at (20,10) {$\bullet$};
	\node[lightgray] (Z12) at (22,10) {$\bullet$};
	
	\node[lightgray] (Y1) at (1,9) {$\bullet$};
	\node[lightgray] (Y2) at (3,9) {$\bullet$};
	\node[lightgray] (Y3) at (5,9) {$\bullet$};
	\node[lightgray] (Y4) at (7,9) {$\bullet$};
	\node (Y5) at (9,9) {$\bullet$};
	\node[circle, draw] (Y6) at (11,9) {$\bullet$};
	\node (Y7) at (13,9) {$\bullet$};
	\node[lightgray] (Y8) at (15,9) {$\bullet$};
	\node[lightgray] (Y9) at (17,9) {$\bullet$}; 
	\node[lightgray] (Y10) at (19,9) {$\bullet$};
	\node[lightgray] (Y11) at (21,9) {$\bullet$};
	\node[lightgray] (Y12) at (23,9) {$\bullet$};
	
	\node[lightgray] (S1) at (0,8) {$\bullet$};
	\node[lightgray] (S2) at (2,8) {$\bullet$};
	\node[lightgray, circle, draw] (S3) at (4,8) {$\bullet$};
	\node[lightgray] (S4) at (6,8) {$\bullet$};
	\node (S5) at (8,8) {$\bullet$};
	\node (S6) at (10,8) {$\bullet$};
	\node (S7) at (12,8) {$\bullet$};
	\node[] (S8) at (14,8) {$\bullet$};
	\node[lightgray] (S9) at (16,8) {$\bullet$};
	\node[lightgray, circle, draw] (S10) at (18,8) {$\bullet$};
	\node[lightgray] (S11) at (20,8) {$\bullet$};
	\node[lightgray] (S12) at (22,8) {$\bullet$};
	
	\node[lightgray] (B1) at (1,7) {$\bullet$};
	\node[lightgray] (B2) at (3,7) {$\bullet$};
	\node[lightgray] (B3) at (5,7) {$\bullet$};
	\node (B4) at (7,7) {$\bullet$};
	\node (B5) at (9,7) {$\bullet$};
	\node (B6) at (11,7) {$\bullet$};
	\node (B7) at (13,7) {$\bullet$};
	\node (B8) at (15,7) {$\bullet$};
	\node[lightgray] (B9) at (17,7) {$\bullet$}; 
	\node[lightgray] (B10) at (19,7) {$\bullet$};
	\node[lightgray] (B11) at (21,7) {$\bullet$};
	\node[lightgray] (B12) at (23,7) {$\bullet$};
	\node[lightgray] (C1) at (0,6) {$\bullet$};
	\node[lightgray] (C2) at (2,6) {$\bullet$};
	\node[lightgray] (C3) at (4,6) {$\bullet$};
	\node (C4) at (6,6) {$\bullet$};
	\node (C5) at (8,6) {$\bullet$};
	\node (C6) at (10,6) {$\bullet$};
	\node (C7) at (12,6) {$\bullet$};
	\node (C8) at (14,6) {$\bullet$};
	\node[] (C9) at (16,6) {$\bullet$};
	\node[lightgray] (C10) at (18,6) {$\bullet$};
	\node[lightgray, circle, draw] (C11) at (20,6) {$\bullet$};
	\node[lightgray] (C12) at (22,6) {$\bullet$};
	\node[lightgray] (D1) at (1,5) {$\bullet$};
	\node[lightgray] (D2) at (3,5) {$\bullet$};
	\node (D3) at (5,5) {$\bullet$};
	\node[circle, draw] (D4) at (7,5) {$\bullet$};
	\node (D5) at (9,5) {$\bullet$};
	\node (D6) at (11,5) {$\bullet$};
	\node (D7) at (13,5) {$\bullet$};
	\node (D8) at (15,5) {$\bullet$};
	\node (D9) at (17,5) {$\bullet$}; 
	\node[lightgray] (D10) at (19,5) {$\bullet$};
	\node[lightgray] (D11) at (21,5) {$\bullet$};
	\node[lightgray] (D12) at (23,5) {$\bullet$};
	\node[lightgray] (E1) at (0,4) {$\bullet$};
	\node[lightgray] (E2) at (2,4) {$\bullet$};
	\node (E3) at (4,4) {$\bullet$};
	\node (E4) at (6,4) {$\bullet$};
	\node (E5) at (8,4) {$\bullet$};
	\node (E6) at (10,4) {$\bullet$};
	\node (E7) at (12,4) {$\bullet$};
	\node (E8) at (14,4) {$\bullet$};
	\node (E9) at (16,4) {$\bullet$};
	\node (E10) at (18,4) {$\bullet$};
	\node[lightgray] (E11) at (20,4) {$\bullet$};
	\node[lightgray] (E12) at (22,4) {$\bullet$};
	\node[lightgray] (F1) at (1,3) {$\bullet$};
	\node (F2) at (3,3) {$\bullet$};
	\node[circle, draw] (F3) at (5,3) {$\bullet$};
	\node (F4) at (7,3) {$\bullet$};
	\node (F5) at (9,3) {$\bullet$};
	\node (F6) at (11,3) {$\bullet$};
	\node (F7) at (13,3) {$\bullet$};
	\node (F8) at (15,3) {$\bullet$};
	\node[circle, draw] (F9) at (17,3) {$\bullet$};
	\node (F10) at (19,3) {$\bullet$};
	\node[lightgray] (F11) at (21,3) {$\bullet$};
	\node[lightgray] (F12) at (23,3) {$\bullet$}; 
	\node[lightgray] (G1) at (0,2) {$\bullet$};
	\node (G2) at (2,2) {$\bullet$};
	\node (G3) at (4,2) {$\bullet$};
	\node (G4) at (6,2) {$\bullet$};
	\node (G5) at (8,2) {$\bullet$};
	\node (G6) at (10,2) {$\bullet$};
	\node (G7) at (12,2) {$\bullet$};
	\node (G8) at (14,2) {$\bullet$};
	\node (G9) at (16,2) {$\bullet$};
	\node (G10) at (18,2) {$\bullet$};
	\node (G11) at (20,2) {$\bullet$};
	\node[lightgray] (G12) at (22,2) {$\bullet$};
	\node (H1) at (1,1) {$\bullet$};
	\node[circle, draw] (H2) at (3,1) {$\bullet$};
	\node (H3) at (5,1) {$\bullet$};
	\node[circle, draw] (H4) at (7,1) {$\bullet$};
	\node (H5) at (9,1) {$\bullet$};
	\node[circle, draw] (H6) at (11,1) {$\bullet$};
	\node (H7) at (13,1) {$\bullet$};
	\node[circle, draw] (H8) at (15,1) {$\bullet$};
	\node (H9) at (17,1) {$\bullet$}; 
	\node[circle, draw] (H10) at (19,1) {$\bullet$};
	\node (H11) at (21,1) {$\bullet$};
	\node[lightgray, circle, draw] (H12) at (23,1) {$\bullet$};
	
	\draw[lightgray, ->] (Z1) -- (Y1);
	\draw[lightgray,->] (Z2) -- (Y2);
	\draw[lightgray,->] (Z3) -- (Y3);
	\draw[lightgray,->] (Z4) -- (Y4);
	\draw[lightgray,->] (Z5) -- (Y5);
	\draw[->] (Z6) -- (Y6);
	\draw[->] (Z7) -- (Y7);
	\draw[lightgray,->] (Z8) -- (Y8);
	\draw[lightgray,->] (Z9) -- (Y9);
	\draw[lightgray,->] (Z10) -- (Y10);
	\draw[lightgray,->] (Z11) -- (Y11);
	\draw[lightgray,->] (Z12) -- (Y12);

	\draw[lightgray,->] (Y1) -- (Z2);
	\draw[lightgray,->] (Y2) -- (Z3);
	\draw[lightgray,->] (Y3) -- (Z4);
	\draw[lightgray,->] (Y4) -- (Z5);
	\draw[->] (Y5) -- (Z6);
	\draw[->] (Y6) -- (Z7);
	\draw[lightgray,->] (Y7) -- (Z8);
	\draw[lightgray,->] (Y8) -- (Z9);
	\draw[lightgray,->] (Y9) -- (Z10);
	\draw[lightgray,->] (Y10) -- (Z11);
	\draw[lightgray,->] (Y11) -- (Z12);
	
	\draw[lightgray,<-] (Y1) -- (S1);
	\draw[lightgray,<-] (Y2) -- (S2);
	\draw[lightgray,<-] (Y3) -- (S3);
	\draw[lightgray,<-] (Y4) -- (S4);
	\draw[<-] (Y5) -- (S5);
	\draw[<-] (Y6) -- (S6);
	\draw[<-] (Y7) -- (S7);
	\draw[lightgray,<-] (Y8) -- (S8);
	\draw[lightgray,<-] (Y9) -- (S9);
	\draw[lightgray,<-] (Y10) -- (S10);
	\draw[lightgray,<-] (Y11) -- (S11);
	\draw[lightgray,<-] (Y12) -- (S12);
	
	\draw[lightgray,->] (Y1) -- (S2);
	\draw[lightgray,->] (Y2) -- (S3);
	\draw[lightgray,->] (Y3) -- (S4);
	\draw[lightgray,->] (Y4) -- (S5);
	\draw[->] (Y5) -- (S6);
	\draw[->] (Y6) -- (S7);
	\draw[->] (Y7) -- (S8);
	\draw[lightgray,->] (Y8) -- (S9);
	\draw[lightgray,->] (Y9) -- (S10);
	\draw[lightgray,->] (Y10) -- (S11);
	\draw[lightgray,->] (Y11) -- (S12);	
	
	\draw[lightgray,->] (S1) -- (B1);
	\draw[lightgray,->] (S2) -- (B2);
	\draw[lightgray,->] (S3) -- (B3);
	\draw[lightgray,->] (S4) -- (B4);
	\draw[->] (S5) -- (B5);
	\draw[->] (S6) -- (B6);
	\draw[->] (S7) -- (B7);
	\draw[->] (S8) -- (B8);
	\draw[lightgray,->] (S9) -- (B9);
	\draw[lightgray,->] (S10) -- (B10);
	\draw[lightgray,->] (S11) -- (B11);
	\draw[lightgray,->] (S12) -- (B12);

	\draw[lightgray,->] (B1) -- (S2);
	\draw[lightgray,->] (B2) -- (S3);
	\draw[lightgray,->] (B3) -- (S4);
	\draw[->] (B4) -- (S5);
	\draw[->] (B5) -- (S6);
	\draw[->] (B6) -- (S7);
	\draw[->] (B7) -- (S8);
	\draw[lightgray,->] (B8) -- (S9);
	\draw[lightgray,->] (B9) -- (S10);
	\draw[lightgray,->] (B10) -- (S11);
	\draw[lightgray,->] (B11) -- (S12);
	
	\draw[lightgray,<-] (B1) -- (C1);
	\draw[lightgray,<-] (B2) -- (C2);
	\draw[lightgray,<-] (B3) -- (C3);
	\draw[<-] (B4) -- (C4);
	\draw[<-] (B5) -- (C5);
	\draw[<-] (B6) -- (C6);
	\draw[<-] (B7) -- (C7);
	\draw[<-] (B8) -- (C8);
	\draw[lightgray,<-] (B9) -- (C9);
	\draw[lightgray,<-] (B10) -- (C10);
	\draw[lightgray,<-] (B11) -- (C11);
	\draw[lightgray,<-] (B12) -- (C12);
	
	\draw[lightgray,->] (B1) -- (C2);
	\draw[lightgray,->] (B2) -- (C3);
	\draw[lightgray,->] (B3) -- (C4);
	\draw[->] (B4) -- (C5);
	\draw[->] (B5) -- (C6);
	\draw[->] (B6) -- (C7);
	\draw[->] (B7) -- (C8);
	\draw[->] (B8) -- (C9);
	\draw[lightgray,->] (B9) -- (C10);
	\draw[lightgray,->] (B10) -- (C11);
	\draw[lightgray,->] (B11) -- (C12);	
	
	\draw[lightgray,->] (C1) -- (D1);
	\draw[lightgray,->] (C2) -- (D2);
	\draw[lightgray,->] (C3) -- (D3);
	\draw[->] (C4) -- (D4);
	\draw[->] (C5) -- (D5);
	\draw[->] (C6) -- (D6);
	\draw[->] (C7) -- (D7);
	\draw[->] (C8) -- (D8);
	\draw[->] (C9) -- (D9);
	\draw[lightgray,->] (C10) -- (D10);
	\draw[lightgray,->] (C11) -- (D11);
	\draw[lightgray,->] (C12) -- (D12);
	
	\draw[lightgray,->] (D1) -- (C2);
	\draw[lightgray,->] (D2) -- (C3);
	\draw[->] (D3) -- (C4);
	\draw[->] (D4) -- (C5);
	\draw[->] (D5) -- (C6);
	\draw[->] (D6) -- (C7);
	\draw[->] (D7) -- (C8);
	\draw[->] (D8) -- (C9);
	\draw[lightgray,->] (D9) -- (C10);
	\draw[lightgray,->] (D10) -- (C11);
	\draw[lightgray,->] (D11) -- (C12);
	
	\draw[lightgray,<-] (D1) -- (E1);
	\draw[lightgray,<-] (D2) -- (E2);
	\draw[<-] (D3) -- (E3);
	\draw[<-] (D4) -- (E4);
	\draw[<-] (D5) -- (E5);
	\draw[<-] (D6) -- (E6);
	\draw[<-] (D7) -- (E7);
	\draw[<-] (D8) -- (E8);
	\draw[<-] (D9) -- (E9);
	\draw[lightgray,<-] (D10) -- (E10);
	\draw[lightgray,<-] (D11) -- (E11);
	\draw[lightgray,<-] (D12) -- (E12);
	
	\draw[lightgray,->] (D1) -- (E2);
	\draw[lightgray,->] (D2) -- (E3);
	\draw[->] (D3) -- (E4);
	\draw[->] (D4) -- (E5);
	\draw[->] (D5) -- (E6);
	\draw[->] (D6) -- (E7);
	\draw[->] (D7) -- (E8);
	\draw[->] (D8) -- (E9);
	\draw[->] (D9) -- (E10);
	\draw[lightgray,->] (D10) -- (E11);
	\draw[lightgray,->] (D11) -- (E12);
	
	\draw[lightgray,->] (E1) -- (F1);
	\draw[lightgray,->] (E2) -- (F2);
	\draw[->] (E3) -- (F3);
	\draw[->] (E4) -- (F4);
	\draw[->] (E5) -- (F5);
	\draw[->] (E6) -- (F6);
	\draw[->] (E7) -- (F7);
	\draw[->] (E8) -- (F8);
	\draw[->] (E9) -- (F9);
	\draw[->] (E10) -- (F10);
	\draw[lightgray,->] (E11) -- (F11);
	\draw[lightgray,->] (E12) -- (F12);
	
	\draw[lightgray,->] (F1) -- (E2);
	\draw[->] (F2) -- (E3);
	\draw[->] (F3) -- (E4);
	\draw[->] (F4) -- (E5);
	\draw[->] (F5) -- (E6);
	\draw[->] (F6) -- (E7);
	\draw[->] (F7) -- (E8);
	\draw[->] (F8) -- (E9);
	\draw[->] (F9) -- (E10);
	\draw[lightgray,->] (F10) -- (E11);
	\draw[lightgray,->] (F11) -- (E12);

	\draw[lightgray,<-] (F1) -- (G1);
	\draw[<-] (F2) -- (G2);
	\draw[<-] (F3) -- (G3);
	\draw[<-] (F4) -- (G4);
	\draw[<-] (F5) -- (G5);
	\draw[<-] (F6) -- (G6);
	\draw[<-] (F7) -- (G7);
	\draw[<-] (F8) -- (G8);
	\draw[<-] (F9) -- (G9);
	\draw[<-] (F10) -- (G10);
	\draw[lightgray,<-] (F11) -- (G11);
	\draw[lightgray,<-] (F12) -- (G12);
	
	\draw[lightgray,->] (F1) -- (G2);
	\draw[->] (F2) -- (G3);
	\draw[->] (F3) -- (G4);
	\draw[->] (F4) -- (G5);
	\draw[->] (F5) -- (G6);
	\draw[->] (F6) -- (G7);
	\draw[->] (F7) -- (G8);
	\draw[->] (F8) -- (G9);
	\draw[->] (F9) -- (G10);
	\draw[->] (F10) -- (G11);
	\draw[lightgray,->] (F11) -- (G12);
	
	\draw[lightgray,->] (G1) -- (H1);
	\draw[->] (G2) -- (H2);
	\draw[->] (G3) -- (H3);
	\draw[->] (G4) -- (H4);
	\draw[->] (G5) -- (H5);
	\draw[->] (G6) -- (H6);
	\draw[->] (G7) -- (H7);
	\draw[->] (G8) -- (H8);
	\draw[->] (G9) -- (H9);
	\draw[->] (G10) -- (H10);
	\draw[->] (G11) -- (H11);
	\draw[lightgray,->] (G12) -- (H12);
	
	\draw[->] (H1) -- (G2);
	\draw[->] (H2) -- (G3);
	\draw[->] (H3) -- (G4);
	\draw[->] (H4) -- (G5);
	\draw[->] (H5) -- (G6);
	\draw[->] (H6) -- (G7);
	\draw[->] (H7) -- (G8);
	\draw[->] (H8) -- (G9);
	\draw[->] (H9) -- (G10);
	\draw[->] (H10) -- (G11);
	\draw[lightgray,->] (H11) -- (G12);

\end{tikzpicture}
\end{center}

Consider the cluster tilted algebra $B_T = \End_{\CC_{A_{10}}}(T)$. 
We have $B_T = kQ \big/I$, where $Q$ is the quiver 
\vskip-2ex
\[\xymatrix@!C=0pt@!R=0pt{
&&&&&&7\ar[dl] &\\
Q: &8&&3\ar[ll]\ar[dr]&&2\ar[ll]\ar[rr] &&6\ar[lu]   \\
&&10\ar[ld]&&1\ar[ur]\ar[dl] \\
&9\ar[rr]&&5\ar[rr]\ar[lu]&&4\ar[ul]}\]
and $I$ is the ideal generated by the directed paths of length 2 which are part of the same 3-cycle. 
We refer the reader to \cite{BuanMarshReiten} for a detailed description of cluster-tilted algebras of Dynkin type $A$.

In light of Theorem \ref{T:KR} we can view $\modcat(B_T)$ as a subcategory of $\CC_{A_{10}}$ and label the indecomposable objects in 
$\CC_{A_{10}}$ by modules and shifts of projective modules respectively:

\begin{center}
\begin{tikzpicture}[scale=0.6, transform shape]

	\node[lightgray] (Z1) at (0,10) {$P_6[1]$};
	\node[lightgray] (Z2) at (2,10) {$\begin{smallmatrix}6\\7\end{smallmatrix}$};
	\node[lightgray] (Z3) at (4,10) {$P_7[1]$};
	\node[lightgray] (Z4) at (6,10) {$\begin{smallmatrix}7\\2\\3\\8\end{smallmatrix}$};
	\node[lightgray] (Z5) at (8,10) {$P_8[1]$};
	\node (Z6) at (10,10) {$8$};
	\node (Z7) at (12,10) {$\begin{smallmatrix}3\\1\\5\\10\end{smallmatrix}$};
	\node[lightgray] (Z8) at (14,10) {$P_{10}[1]$};
	\node[lightgray] (Z9) at (16,10) {$\begin{smallmatrix}10\\9\end{smallmatrix}$};
	\node[lightgray] (Z10) at (18,10) {$P_9[1]$};
	\node[lightgray] (Z11) at (20,10) {$\begin{smallmatrix}9\\5\\4\end{smallmatrix}$};
	\node[lightgray] (Z12) at (22,10) {$P_4[1]$};
	
	\node[lightgray] (Y1) at (1,9) {$7$};
	\node[lightgray] (Y2) at (3,9) {$6$};
	\node[lightgray] (Y3) at (5,9) {$\begin{smallmatrix}2\\3\\8\end{smallmatrix}$};
	\node[lightgray] (Y4) at (7,9) {$\begin{smallmatrix}7\\2\\3\end{smallmatrix}$};
	\node (Y5) at (9,9) {$P_3[1]$};
	\node (Y6) at (11,9) {$\begin{smallmatrix}\;3\;\\8\;1\\\;\;5\\\;\;10\end{smallmatrix}$};
	\node (Y7) at (13,9) {$\begin{smallmatrix}3\\1\\5\end{smallmatrix}$};
	\node[lightgray] (Y8) at (15,9) {$9$};
	\node[lightgray] (Y9) at (17,9) {$10$}; 
	\node[lightgray] (Y10) at (19,9) {$\begin{smallmatrix}5\\4\end{smallmatrix}$};
	\node[lightgray] (Y11) at (21,9) {$\begin{smallmatrix}9\\5\end{smallmatrix}$};
	\node[lightgray] (Y12) at (23,9) {$\begin{smallmatrix}1\\2\\6\end{smallmatrix}$};
	
	\node[lightgray] (S1) at (0,8) {$\begin{smallmatrix}\;\;4\\7\;1\\\;2\end{smallmatrix}$};
	\node[lightgray] (S2) at (2,8) {$P_2[1]$};
	\node[lightgray] (S3) at (4,8) {$\begin{smallmatrix}\;2\;\\3\;6\\8\;\;\end{smallmatrix}$};
	\node[lightgray] (S4) at (6,8) {$\begin{smallmatrix}2\\3\end{smallmatrix}$};
	\node (S5) at (8,8) {$\begin{smallmatrix}7\\2\end{smallmatrix}$};
	\node (S6) at (10,8) {$\begin{smallmatrix}1\\5\\10\end{smallmatrix}$};
	\node (S7) at (12,8) {$\begin{smallmatrix}\;3\;\\8\;1\\\;\;5\end{smallmatrix}$};
	\node (S8) at (14,8) {$\begin{smallmatrix}3\;\;\;\\1\;9\\5\end{smallmatrix}$};
	\node[lightgray] (S9) at (16,8) {$P_5[1]$};
	\node[lightgray] (S10) at (18,8) {$\begin{smallmatrix}\;5\;\\4\;10\end{smallmatrix}$};
	\node[lightgray] (S11) at (20,8) {$5$};
	\node[lightgray] (S12) at (22,8) {$\begin{smallmatrix}9\;1\;\\\;\;5\;2\\\;\;\;\;6\end{smallmatrix}$};
	
	\node[lightgray] (B1) at (1,7) {$\begin{smallmatrix}4\\1\end{smallmatrix}$};
	\node[lightgray] (B2) at (3,7) {$\begin{smallmatrix}3\\8\end{smallmatrix}$};
	\node[lightgray] (B3) at (5,7) {$\begin{smallmatrix}\;2\;\\3\;6\end{smallmatrix}$};
	\node (B4) at (7,7) {$2$};
	\node (B5) at (9,7) {$\begin{smallmatrix}\;\;1\;7\\\;5\;2\;\\10\;\;\;\;\end{smallmatrix}$};
	\node (B6) at (11,7) {$\begin{smallmatrix}1\\5\end{smallmatrix}$};
	\node (B7) at (13,7) {$\begin{smallmatrix}\;3\;\;\;\\8\;1\;9\\\;\;\;5\;\end{smallmatrix}$};
	\node (B8) at (15,7) {$\begin{smallmatrix}3\\1\end{smallmatrix}$};
	\node[lightgray] (B9) at (17,7) {$4$}; 
	\node[lightgray] (B10) at (19,7) {$\begin{smallmatrix}5\\10\end{smallmatrix}$};
	\node[lightgray] (B11) at (21,7) {$\begin{smallmatrix}\;1\;\\5\;2\\\;\;6\end{smallmatrix}$};
	\node[lightgray] (B12) at (23,7) {$\begin{smallmatrix}9\;1\\\;5\;2\end{smallmatrix}$};
	\node[lightgray] (C1) at (0,6) {$1$};
	\node[lightgray] (C2) at (2,6) {$\begin{smallmatrix}\;\;\;8\\4\;3\;\\\;1\;\;\end{smallmatrix}$};
	\node[lightgray] (C3) at (4,6) {$3$};
	\node (C4) at (6,6) {$\begin{smallmatrix}2\\6\end{smallmatrix}$};
	\node (C5) at (8,6) {$\begin{smallmatrix}\;\;1\;\\\;5\;2\\10\;\;\;\end{smallmatrix}$};
	\node (C6) at (10,6) {$\begin{smallmatrix}\;1\;7\\5\;2\;\end{smallmatrix}$};
	\node (C7) at (12,6) {$\begin{smallmatrix}9\;1\\\;5\;\end{smallmatrix}$};
	\node (C8) at (14,6) {$\begin{smallmatrix}\;3\;\\8\;1\end{smallmatrix}$};
	\node (C9) at (16,6) {$\begin{smallmatrix}4\;3\\\;1\;\end{smallmatrix}$};
	\node[lightgray] (C10) at (18,6) {$P_1[1]$};
	\node[lightgray] (C11) at (20,6) {$5$};
	\node[lightgray] (C12) at (22,6) {$\begin{smallmatrix}9\;1\;\\\;\;5\;2\\\;\;\;\;6\end{smallmatrix}$};
	\node[lightgray] (D1) at (1,5) {$\begin{smallmatrix}\;3\;\\8\;1\end{smallmatrix}$};
	\node[lightgray] (D2) at (3,5) {$\begin{smallmatrix}4\;3\\\;1\;\end{smallmatrix}$};
	\node (D3) at (5,5) {$P_1[1]$};
	\node (D4) at (7,5) {$\begin{smallmatrix}\;\;1\;\;\\\;5\;2\;\\10\;\;\;6\end{smallmatrix}$};
	\node (D5) at (9,5) {$\begin{smallmatrix}\;1\;\\5\;2\end{smallmatrix}$};
	\node (D6) at (11,5) {$\begin{smallmatrix}9\;1\;7\\\;5\;2\;\end{smallmatrix}$};
	\node (D7) at (13,5) {$1$};
	\node (D8) at (15,5) {$\begin{smallmatrix}\;3\;4\\8\;1\;\end{smallmatrix}$};
	\node (D9) at (17,5) {$3$}; 
	\node[lightgray] (D10) at (19,5) {$\begin{smallmatrix}2\\6\end{smallmatrix}$};
	\node[lightgray] (D11) at (21,5) {$\begin{smallmatrix}\;\;1\;\\\;5\;2\;\\10\;\;\;\;\end{smallmatrix}$};
	\node[lightgray] (D12) at (23,5) {$\begin{smallmatrix}1\\5\end{smallmatrix}$};
	\node[lightgray] (E1) at (0,4) {$\begin{smallmatrix}\;3\;\;\;\\8\;1\;9\\\;\;\;5\end{smallmatrix}$};
	\node[lightgray] (E2) at (2,4) {$\begin{smallmatrix}3\\1\end{smallmatrix}$};
	\node (E3) at (4,4) {$4$};
	\node (E4) at (6,4) {$\begin{smallmatrix}5\\10\end{smallmatrix}$};
	\node (E5) at (8,4) {$\begin{smallmatrix}\;1\;\\5\;2\\\;\;6\end{smallmatrix}$};
	\node (E6) at (10,4) {$\begin{smallmatrix}9\;1\\\;5\;2\end{smallmatrix}$};
	\node (E7) at (12,4) {$\begin{smallmatrix}7\;1\\\;2\end{smallmatrix}$};
	\node (E8) at (14,4) {$\begin{smallmatrix}4\\1\end{smallmatrix}$};
	\node (E9) at (16,4) {$\begin{smallmatrix}3\\8\end{smallmatrix}$};
	\node (E10) at (18,4) {$\begin{smallmatrix}\;2\;\\3\;6\end{smallmatrix}$};
	\node[lightgray] (E11) at (20,4) {$2$};
	\node[lightgray] (E12) at (22,4) {$\begin{smallmatrix}\;\;1\;7\\\;5\;2\;\\10\;\;\;\end{smallmatrix}$};
	\node[lightgray] (F1) at (1,3) {$\begin{smallmatrix}3\;\;\;\\1\;9\\5\end{smallmatrix}$};
	\node (F2) at (3,3) {$P_5[1]$};
	\node (F3) at (5,3) {$\begin{smallmatrix}\;5\;\\4\;10\end{smallmatrix}$};
	\node (F4) at (7,3) {$5$};
	\node (F5) at (9,3) {$\begin{smallmatrix}9\;1\;\\\;\;5\;2\\\;\;\;\;6\end{smallmatrix}$};
	\node (F6) at (11,3) {$\begin{smallmatrix}1\\2\end{smallmatrix}$};
	\node (F7) at (13,3) {$\begin{smallmatrix}\;\;4\\7\;1\\\;2\end{smallmatrix}$};
	\node (F8) at (15,3) {$P_2[1]$};
	\node (F9) at (17,3) {$\begin{smallmatrix}\;2\;\\3\;6\\8\;\;\end{smallmatrix}$};
	\node (F10) at (19,3) {$\begin{smallmatrix}2\\3\end{smallmatrix}$};
	\node[lightgray] (F11) at (21,3) {$\begin{smallmatrix}7\\2\end{smallmatrix}$};
	\node[lightgray] (F12) at (23,3) {$\begin{smallmatrix}1\\5\\10\end{smallmatrix}$}; 
	\node[lightgray] (G1) at (0,2) {$\begin{smallmatrix}3\\1\\5\end{smallmatrix}$};
	\node (G2) at (2,2) {$9$};
	\node (G3) at (4,2) {$10$};
	\node (G4) at (6,2) {$\begin{smallmatrix}5\\4\end{smallmatrix}$};
	\node (G5) at (8,2) {$\begin{smallmatrix}9\\5\end{smallmatrix}$};
	\node (G6) at (10,2) {$\begin{smallmatrix}1\\2\\6\end{smallmatrix}$};
	\node (G7) at (12,2) {$\begin{smallmatrix}4\\1\\2\end{smallmatrix}$};
	\node (G8) at (14,2) {$7$};
	\node (G9) at (16,2) {$6$};
	\node (G10) at (18,2) {$\begin{smallmatrix}2\\3\\8\end{smallmatrix}$};
	\node (G11) at (20,2) {$\begin{smallmatrix}7\\2\\3\end{smallmatrix}$};
	\node[lightgray] (G12) at (22,2) {$P_3[1]$};
	\node (H1) at (1,1) {$P_{10}[1]$};
	\node (H2) at (3,1) {$\begin{smallmatrix}10\\9\end{smallmatrix}$};
	\node (H3) at (5,1) {$P_9[1]$};
	\node (H4) at (7,1) {$\begin{smallmatrix}9\\5\\4\end{smallmatrix}$};
	\node (H5) at (9,1) {$P_4[1]$};
	\node (H6) at (11,1) {$\begin{smallmatrix}4\\1\\2\\6\end{smallmatrix}$};
	\node (H7) at (13,1) {$P_6[1]$};
	\node(H8) at (15,1) {$\begin{smallmatrix}6\\7\end{smallmatrix}$};
	\node (H9) at (17,1) {$P_7[1]$}; 
	\node(H10) at (19,1) {$\begin{smallmatrix}7\\2\\3\\8\end{smallmatrix}$};
	\node (H11) at (21,1) {$P_8[1]$};
	\node[lightgray] (H12) at (23,1) {$8$};
	
	\draw[lightgray, ->] (Z1) -- (Y1);
	\draw[lightgray,->] (Z2) -- (Y2);
	\draw[lightgray,->] (Z3) -- (Y3);
	\draw[lightgray,->] (Z4) -- (Y4);
	\draw[lightgray,->] (Z5) -- (Y5);
	\draw[->] (Z6) -- (Y6);
	\draw[->] (Z7) -- (Y7);
	\draw[lightgray,->] (Z8) -- (Y8);
	\draw[lightgray,->] (Z9) -- (Y9);
	\draw[lightgray,->] (Z10) -- (Y10);
	\draw[lightgray,->] (Z11) -- (Y11);
	\draw[lightgray,->] (Z12) -- (Y12);

	\draw[lightgray,->] (Y1) -- (Z2);
	\draw[lightgray,->] (Y2) -- (Z3);
	\draw[lightgray,->] (Y3) -- (Z4);
	\draw[lightgray,->] (Y4) -- (Z5);
	\draw[->] (Y5) -- (Z6);
	\draw[->] (Y6) -- (Z7);
	\draw[lightgray,->] (Y7) -- (Z8);
	\draw[lightgray,->] (Y8) -- (Z9);
	\draw[lightgray,->] (Y9) -- (Z10);
	\draw[lightgray,->] (Y10) -- (Z11);
	\draw[lightgray,->] (Y11) -- (Z12);
	
	\draw[lightgray,<-] (Y1) -- (S1);
	\draw[lightgray,<-] (Y2) -- (S2);
	\draw[lightgray,<-] (Y3) -- (S3);
	\draw[lightgray,<-] (Y4) -- (S4);
	\draw[<-] (Y5) -- (S5);
	\draw[<-] (Y6) -- (S6);
	\draw[<-] (Y7) -- (S7);
	\draw[lightgray,<-] (Y8) -- (S8);
	\draw[lightgray,<-] (Y9) -- (S9);
	\draw[lightgray,<-] (Y10) -- (S10);
	\draw[lightgray,<-] (Y11) -- (S11);
	\draw[lightgray,<-] (Y12) -- (S12);
	
	\draw[lightgray,->] (Y1) -- (S2);
	\draw[lightgray,->] (Y2) -- (S3);
	\draw[lightgray,->] (Y3) -- (S4);
	\draw[lightgray,->] (Y4) -- (S5);
	\draw[->] (Y5) -- (S6);
	\draw[->] (Y6) -- (S7);
	\draw[->] (Y7) -- (S8);
	\draw[lightgray,->] (Y8) -- (S9);
	\draw[lightgray,->] (Y9) -- (S10);
	\draw[lightgray,->] (Y10) -- (S11);
	\draw[lightgray,->] (Y11) -- (S12);	
	
	\draw[lightgray,->] (S1) -- (B1);
	\draw[lightgray,->] (S2) -- (B2);
	\draw[lightgray,->] (S3) -- (B3);
	\draw[lightgray,->] (S4) -- (B4);
	\draw[->] (S5) -- (B5);
	\draw[->] (S6) -- (B6);
	\draw[->] (S7) -- (B7);
	\draw[->] (S8) -- (B8);
	\draw[lightgray,->] (S9) -- (B9);
	\draw[lightgray,->] (S10) -- (B10);
	\draw[lightgray,->] (S11) -- (B11);
	\draw[lightgray,->] (S12) -- (B12);

	\draw[lightgray,->] (B1) -- (S2);
	\draw[lightgray,->] (B2) -- (S3);
	\draw[lightgray,->] (B3) -- (S4);
	\draw[->] (B4) -- (S5);
	\draw[->] (B5) -- (S6);
	\draw[->] (B6) -- (S7);
	\draw[->] (B7) -- (S8);
	\draw[lightgray,->] (B8) -- (S9);
	\draw[lightgray,->] (B9) -- (S10);
	\draw[lightgray,->] (B10) -- (S11);
	\draw[lightgray,->] (B11) -- (S12);
	
	\draw[lightgray,<-] (B1) -- (C1);
	\draw[lightgray,<-] (B2) -- (C2);
	\draw[lightgray,<-] (B3) -- (C3);
	\draw[<-] (B4) -- (C4);
	\draw[<-] (B5) -- (C5);
	\draw[<-] (B6) -- (C6);
	\draw[<-] (B7) -- (C7);
	\draw[<-] (B8) -- (C8);
	\draw[lightgray,<-] (B9) -- (C9);
	\draw[lightgray,<-] (B10) -- (C10);
	\draw[lightgray,<-] (B11) -- (C11);
	\draw[lightgray,<-] (B12) -- (C12);
	
	\draw[lightgray,->] (B1) -- (C2);
	\draw[lightgray,->] (B2) -- (C3);
	\draw[lightgray,->] (B3) -- (C4);
	\draw[->] (B4) -- (C5);
	\draw[->] (B5) -- (C6);
	\draw[->] (B6) -- (C7);
	\draw[->] (B7) -- (C8);
	\draw[->] (B8) -- (C9);
	\draw[lightgray,->] (B9) -- (C10);
	\draw[lightgray,->] (B10) -- (C11);
	\draw[lightgray,->] (B11) -- (C12);	
	
	\draw[lightgray,->] (C1) -- (D1);
	\draw[lightgray,->] (C2) -- (D2);
	\draw[lightgray,->] (C3) -- (D3);
	\draw[->] (C4) -- (D4);
	\draw[->] (C5) -- (D5);
	\draw[->] (C6) -- (D6);
	\draw[->] (C7) -- (D7);
	\draw[->] (C8) -- (D8);
	\draw[->] (C9) -- (D9);
	\draw[lightgray,->] (C10) -- (D10);
	\draw[lightgray,->] (C11) -- (D11);
	\draw[lightgray,->] (C12) -- (D12);
	
	\draw[lightgray,->] (D1) -- (C2);
	\draw[lightgray,->] (D2) -- (C3);
	\draw[->] (D3) -- (C4);
	\draw[->] (D4) -- (C5);
	\draw[->] (D5) -- (C6);
	\draw[->] (D6) -- (C7);
	\draw[->] (D7) -- (C8);
	\draw[->] (D8) -- (C9);
	\draw[lightgray,->] (D9) -- (C10);
	\draw[lightgray,->] (D10) -- (C11);
	\draw[lightgray,->] (D11) -- (C12);
	
	\draw[lightgray,<-] (D1) -- (E1);
	\draw[lightgray,<-] (D2) -- (E2);
	\draw[<-] (D3) -- (E3);
	\draw[<-] (D4) -- (E4);
	\draw[<-] (D5) -- (E5);
	\draw[<-] (D6) -- (E6);
	\draw[<-] (D7) -- (E7);
	\draw[<-] (D8) -- (E8);
	\draw[<-] (D9) -- (E9);
	\draw[lightgray,<-] (D10) -- (E10);
	\draw[lightgray,<-] (D11) -- (E11);
	\draw[lightgray,<-] (D12) -- (E12);
	
	\draw[lightgray,->] (D1) -- (E2);
	\draw[lightgray,->] (D2) -- (E3);
	\draw[->] (D3) -- (E4);
	\draw[->] (D4) -- (E5);
	\draw[->] (D5) -- (E6);
	\draw[->] (D6) -- (E7);
	\draw[->] (D7) -- (E8);
	\draw[->] (D8) -- (E9);
	\draw[->] (D9) -- (E10);
	\draw[lightgray,->] (D10) -- (E11);
	\draw[lightgray,->] (D11) -- (E12);
	
	\draw[lightgray,->] (E1) -- (F1);
	\draw[lightgray,->] (E2) -- (F2);
	\draw[->] (E3) -- (F3);
	\draw[->] (E4) -- (F4);
	\draw[->] (E5) -- (F5);
	\draw[->] (E6) -- (F6);
	\draw[->] (E7) -- (F7);
	\draw[->] (E8) -- (F8);
	\draw[->] (E9) -- (F9);
	\draw[->] (E10) -- (F10);
	\draw[lightgray,->] (E11) -- (F11);
	\draw[lightgray,->] (E12) -- (F12);
	
	\draw[lightgray,->] (F1) -- (E2);
	\draw[->] (F2) -- (E3);
	\draw[->] (F3) -- (E4);
	\draw[->] (F4) -- (E5);
	\draw[->] (F5) -- (E6);
	\draw[->] (F6) -- (E7);
	\draw[->] (F7) -- (E8);
	\draw[->] (F8) -- (E9);
	\draw[->] (F9) -- (E10);
	\draw[lightgray,->] (F10) -- (E11);
	\draw[lightgray,->] (F11) -- (E12);

	\draw[lightgray,<-] (F1) -- (G1);
	\draw[<-] (F2) -- (G2);
	\draw[<-] (F3) -- (G3);
	\draw[<-] (F4) -- (G4);
	\draw[<-] (F5) -- (G5);
	\draw[<-] (F6) -- (G6);
	\draw[<-] (F7) -- (G7);
	\draw[<-] (F8) -- (G8);
	\draw[<-] (F9) -- (G9);
	\draw[<-] (F10) -- (G10);
	\draw[lightgray,<-] (F11) -- (G11);
	\draw[lightgray,<-] (F12) -- (G12);
	
	\draw[lightgray,->] (F1) -- (G2);
	\draw[->] (F2) -- (G3);
	\draw[->] (F3) -- (G4);
	\draw[->] (F4) -- (G5);
	\draw[->] (F5) -- (G6);
	\draw[->] (F6) -- (G7);
	\draw[->] (F7) -- (G8);
	\draw[->] (F8) -- (G9);
	\draw[->] (F9) -- (G10);
	\draw[->] (F10) -- (G11);
	\draw[lightgray,->] (F11) -- (G12);
	
	\draw[lightgray,->] (G1) -- (H1);
	\draw[->] (G2) -- (H2);
	\draw[->] (G3) -- (H3);
	\draw[->] (G4) -- (H4);
	\draw[->] (G5) -- (H5);
	\draw[->] (G6) -- (H6);
	\draw[->] (G7) -- (H7);
	\draw[->] (G8) -- (H8);
	\draw[->] (G9) -- (H9);
	\draw[->] (G10) -- (H10);
	\draw[->] (G11) -- (H11);
	\draw[lightgray,->] (G12) -- (H12);
	
	\draw[->] (H1) -- (G2);
	\draw[->] (H2) -- (G3);
	\draw[->] (H3) -- (G4);
	\draw[->] (H4) -- (G5);
	\draw[->] (H5) -- (G6);
	\draw[->] (H6) -- (G7);
	\draw[->] (H7) -- (G8);
	\draw[->] (H8) -- (G9);
	\draw[->] (H9) -- (G10);
	\draw[->] (H10) -- (G11);
	\draw[lightgray,->] (H11) -- (G12);

\end{tikzpicture}
\end{center}

Replacing each vertex labelled by a module by the number of its submodules, the shifts of projectives by 1s and adding in the first two and last two rows of 0s and 1s gives rise to the associated frieze:
\vskip-2ex
\begin{center}
\begin{tikzpicture}[scale=0.6, transform shape]

	\node[lightgray] (1) at (0,12) {$0$};
	\node[lightgray] (2) at (2,12) {$0$};
	\node[lightgray] (3) at (4,12) {$0$};
	\node[lightgray] (4) at (6,12) {$0$};
	\node[lightgray] (5) at (8,12) {$0$};
	\node[lightgray] (6) at (10,12) {$0$};
	\node[lightgray] (7) at (12,12) {$0$};
	\node[lightgray] (8) at (14,12) {$0$};
	\node[lightgray] (9) at (16,12) {$0$};
	\node[lightgray] (10) at (18,12) {$0$};
	\node[lightgray] (11) at (20,12) {$0$};
	\node[lightgray] (12) at (22,12) {$0$};
	
	\node[lightgray] (1) at (1,11) {$1$};
	\node[lightgray] (2) at (3,11) {$1$};
	\node[lightgray] (3) at (5,11) {$1$};
	\node[lightgray] (4) at (7,11) {$1$};
	\node[lightgray] (5) at (9,11) {$1$};
	\node[lightgray] (6) at (11,11) {$1$};
	\node[lightgray] (7) at (13,11) {$1$};
	\node[lightgray] (8) at (15,11) {$1$};
	\node[lightgray] (9) at (17,11) {$1$}; 
	\node[lightgray] (10) at (19,11) {$1$};
	\node[lightgray] (11) at (21,11) {$1$};
	\node[lightgray] (12) at (23,11) {$1$};

	\node[lightgray] (Z1) at (0,10) {$1$};
	\node[lightgray] (Z2) at (2,10) {$3$};
	\node[lightgray] (Z3) at (4,10) {$1$};
	\node[lightgray] (Z4) at (6,10) {$5$};
	\node[lightgray] (Z5) at (8,10) {$1$};
	\node (Z6) at (10,10) {$2$};
	\node (Z7) at (12,10) {$5$};
	\node[lightgray] (Z8) at (14,10) {$1$};
	\node[lightgray] (Z9) at (16,10) {$3$};
	\node[lightgray] (Z10) at (18,10) {$1$};
	\node[lightgray] (Z11) at (20,10) {$4$};
	\node[lightgray] (Z12) at (22,10) {$1$};
	
	\node[lightgray] (Y1) at (1,9) {$2$};
	\node[lightgray] (Y2) at (3,9) {$2$};
	\node[lightgray] (Y3) at (5,9) {$4$};
	\node[lightgray] (Y4) at (7,9) {$4$};
	\node (Y5) at (9,9) {$1$};
	\node (Y6) at (11,9) {$9$};
	\node (Y7) at (13,9) {$4$};
	\node[lightgray] (Y8) at (15,9) {$2$};
	\node[lightgray] (Y9) at (17,9) {$2$}; 
	\node[lightgray] (Y10) at (19,9) {$3$};
	\node[lightgray] (Y11) at (21,9) {$3$};
	\node[lightgray] (Y12) at (23,9) {$4$};
	
	\node[lightgray] (S1) at (0,8) {$7$};
	\node[lightgray] (S2) at (2,8) {$1$};
	\node[lightgray] (S3) at (4,8) {$7$};
	\node[lightgray] (S4) at (6,8) {$3$};
	\node (S5) at (8,8) {$3$};
	\node (S6) at (10,8) {$4$};
	\node (S7) at (12,8) {$7$};
	\node (S8) at (14,8) {$7$};
	\node[lightgray] (S9) at (16,8) {$1$};
	\node[lightgray] (S10) at (18,8) {$5$};
	\node[lightgray] (S11) at (20,8) {$2$};
	\node[lightgray] (S12) at (22,8) {$11$};
	
	\node[lightgray] (B1) at (1,7) {$3$};
	\node[lightgray] (B2) at (3,7) {$3$};
	\node[lightgray] (B3) at (5,7) {$5$};
	\node (B4) at (7,7) {$2$};
	\node (B5) at (9,7) {$11$};
	\node (B6) at (11,7) {$3$};
	\node (B7) at (13,7) {$12$};
	\node (B8) at (15,7) {$3$};
	\node[lightgray] (B9) at (17,7) {$2$}; 
	\node[lightgray] (B10) at (19,7) {$3$};
	\node[lightgray] (B11) at (21,7) {$7$};
	\node[lightgray] (B12) at (23,7) {$8$};
	\node[lightgray] (C1) at (0,6) {$2$};
	\node[lightgray] (C2) at (2,6) {$8$};
	\node[lightgray] (C3) at (4,6) {$2$};
	\node (C4) at (6,6) {$3$};
	\node (C5) at (8,6) {$7$};
	\node (C6) at (10,6) {$8$};
	\node (C7) at (12,6) {$5$};
	\node (C8) at (14,6) {$5$};
	\node (C9) at (16,6) {$5$};
	\node[lightgray] (C10) at (18,6) {$1$};
	\node[lightgray] (C11) at (20,6) {$10$};
	\node[lightgray] (C12) at (22,6) {$5$};
	\node[lightgray] (D1) at (1,5) {$5$};
	\node[lightgray] (D2) at (3,5) {$5$};
	\node (D3) at (5,5) {$1$};
	\node (D4) at (7,5) {$10$};
	\node (D5) at (9,5) {$5$};
	\node (D6) at (11,5) {$13$};
	\node (D7) at (13,5) {$2$};
	\node (D8) at (15,5) {$8$};
	\node (D9) at (17,5) {$2$}; 
	\node[lightgray] (D10) at (19,5) {$3$};
	\node[lightgray] (D11) at (21,5) {$7$};
	\node[lightgray] (D12) at (23,5) {$8$};
	\node[lightgray] (E1) at (0,4) {$12$};
	\node[lightgray] (E2) at (2,4) {$3$};
	\node (E3) at (4,4) {$2$};
	\node (E4) at (6,4) {$3$};
	\node (E5) at (8,4) {$7$};
	\node (E6) at (10,4) {$8$};
	\node (E7) at (12,4) {$5$};
	\node (E8) at (14,4) {$3$};
	\node (E9) at (16,4) {$3$};
	\node (E10) at (18,4) {$5$};
	\node[lightgray] (E11) at (20,4) {$2$};
	\node[lightgray] (E12) at (22,4) {$11$};
	\node[lightgray] (F1) at (1,3) {$7$};
	\node (F2) at (3,3) {$1$};
	\node (F3) at (5,3) {$5$};
	\node (F4) at (7,3) {$2$};
	\node (F5) at (9,3) {$11$};
	\node (F6) at (11,3) {$3$};
	\node (F7) at (13,3) {$7$};
	\node (F8) at (15,3) {$1$};
	\node (F9) at (17,3) {$7$};
	\node (F10) at (19,3) {$3$};
	\node[lightgray] (F11) at (21,3) {$3$};
	\node[lightgray] (F12) at (23,3) {$4$}; 
	\node[lightgray] (G1) at (0,2) {$4$};
	\node (G2) at (2,2) {$2$};
	\node (G3) at (4,2) {$2$};
	\node (G4) at (6,2) {$3$};
	\node (G5) at (8,2) {$3$};
	\node (G6) at (10,2) {$4$};
	\node (G7) at (12,2) {$4$};
	\node (G8) at (14,2) {$2$};
	\node (G9) at (16,2) {$2$};
	\node (G10) at (18,2) {$4$};
	\node (G11) at (20,2) {$4$};
	\node[lightgray] (G12) at (22,2) {$1$};
	\node (H1) at (1,1) {$1$};
	\node (H2) at (3,1) {$3$};
	\node (H3) at (5,1) {$1$};
	\node (H4) at (7,1) {$4$};
	\node (H5) at (9,1) {$1$};
	\node (H6) at (11,1) {$5$};
	\node (H7) at (13,1) {$1$};
	\node (H8) at (15,1) {$3$};
	\node (H9) at (17,1) {$1$}; 
	\node (H10) at (19,1) {$5$};
	\node (H11) at (21,1) {$1$};
	\node[lightgray] (H12) at (23,1) {$2$};
	
	\node[lightgray] (1) at (0,0) {$1$};
	\node[lightgray] (2) at (2,0) {$1$};
	\node[lightgray] (3) at (4,0) {$1$};
	\node[lightgray] (4) at (6,0) {$1$};
	\node[lightgray] (5) at (8,0) {$1$};
	\node[lightgray] (6) at (10,0) {$1$};
	\node[lightgray] (7) at (12,0) {$1$};
	\node[lightgray] (8) at (14,0) {$1$};
	\node[lightgray] (9) at (16,0) {$1$};
	\node[lightgray] (10) at (18,0) {$1$};
	\node[lightgray] (11) at (20,0) {$1$};
	\node[lightgray] (12) at (22,0) {$1$};
	
	\node[lightgray] (1) at (1,-1) {$0$};
	\node[lightgray] (2) at (3,-1) {$0$};
	\node[lightgray] (3) at (5,-1) {$0$};
	\node[lightgray] (4) at (7,-1) {$0$};
	\node[lightgray] (5) at (9,-1) {$0$};
	\node[lightgray] (6) at (11,-1) {$0$};
	\node[lightgray] (7) at (13,-1) {$0$};
	\node[lightgray] (8) at (15,-1) {$0$};
	\node[lightgray] (9) at (17,-1) {$0$}; 
	\node[lightgray] (10) at (19,-1) {$0$};
	\node[lightgray] (11) at (21,-1) {$0$};
	\node[lightgray] (12) at (23,-1) {$0$};
\end{tikzpicture}
\end{center}
\vskip-2ex
 \end{ex}

Knowing the position of the 1s in the frieze, i.e.\ the position of the (shifts of the) indecomposable projectives in the Auslander-Reiten quiver, 
is enough to determine the whole frieze using the frieze rule. However, we can determine each entry independently: It is 
the number of submodules of the indecomposable $B_T$-module sitting in the same position in the Auslander-Reiten quiver. In Section \ref{S:Description of the modules} we have seen how to determine the composition series of each module according to its position in the Auslander-Reiten quiver, and in Section \ref{S:Number of submodules} we will derive the number of isomorphism classes of submodules of any given indecomposable $B_T$-module from its composition series. This will allow us to directly determine the entry of the frieze solely based on its relative position to the $1$s in the frieze.

\subsection{Triangulations and frieze patterns}\label{ssec:tri-frieze}

$\ $

Given a regular $n+3$-gon we label its vertices clockwise $1, \dots, n+3$, and we consider them modulo $n+3$.  
A diagonal with endpoints $i$ and $j$ is denoted by $[ij]$.
By \cite{CoCo1,CoCo2} and \cite{BCI}, matching numbers for 
diagonals in a triangulated polygon together 
with the boundary segments correspond to the non-zero entries in a frieze: 
The boundary segments correspond to the entries in the first and the last row of the frieze 
filled only with 1's. 
An entry $m_{i-1,i+1}$, cf. Section~\ref{sec:intro}, 
in the first non-trivial row corresponds to a diagonal of the 
form $[i-1,i+1]$, it is the number of triangles incident with vertex $i$. 
In the next row are the $m_{i-1,i+2}$, they are the 
matching numbers for the diagonals $[i-1,i+2]$, i.e. the number 
of matchings between triangles of the triangulation and vertices $\{i,i+1\}$. 
In particular, apart from the two rows of $1$s, 
the diagonals $[ij]$ of the triangulation are exactly the entries that equal $1$ in the frieze pattern. 

\subsection{Triangulations and cluster categories}\label{ssec:tri-category}
$\ $

On the other hand, diagonals in an $n+3$-gon are in bijection with indecomposable objects 
in the corresponding cluster category $\mathcal{C}$ (\cite{CalderoChapotonSchiffler}): 
we can define a stable translation quiver 
on the diagonals $[i,j]$ of the $n+3$-gon with arrows $[i,j]\to [i,j-1]$ and $[i,j]\to [i-1,j]$ (provided the 
endpoint is a diagonal) and translation $\tau([i,j])=[i+1,j+1]$. This quiver is isomorphic to the AR quiver of 
$\mathcal C$. 
In terms of the AR quiver in Section~\ref{sec:AR quiver}, the row of objects 
$a_{j,1}$, $1\le j\le n+1$ together with $a_{1,n}$, $a_{2,n}$ corresponds to the diagonals $[i-1,i+1]$ 
($i=1,\dots, n+3$), reducing endpoints modulo $n+3$. 
Take $\mathcal T$ a triangulation of an $n+3$-gon with diagonals $d_i=[i_1,i_2]$ (for $i=1,\dots, n$). 
To $\mathcal T$, we associate the cluster-tilting object $T=\oplus T_i$ where $T_i$ corresponds to the diagonal 
$[i_1-1,i_2-1]$. 
Recall that the specialized Caldero Chapoton map $\rho_T$ from Section~\ref{S:Friezes-via-CC} 
associates an entry $1$ with every indecomposable $T_i[1]$. 
In terms of the diagonals of the triangulation $\mathcal T$, this amounts to an entry $1$ 
for every $[ij]\in\mathcal T$ as in Section~\ref{ssec:tri-frieze}.

\subsection{Triangulations and a Frobenius category}\label{ssec:frobenius}

$\ $

We extend $\ind\mathcal C$ by adding an indecomposable for each boundary segment of 
the polygon and denote the resulting category 
by $\mathcal{C}_f$. Then $\mathcal C_f$ is the Frobenius category of maximal CM-modules 
categorifying 
the cluster algebra structure of the coordinate 
ring of the (affine cone of the) Grassmannian Gr(2,n) as studied in \cite{DeLuo} 
and for general Grassmannians in \cite{JKS,BKM}. 
The stable category of $\mathcal{C}_f$ is equivalent to $\mathcal{C}$. 
We then extend the definition of $\rho_T$ to $\mathcal{C}_f$ by setting 
\[
\rho_T(M)=1 \quad\mbox{if $M$ corresponds to a boundary segment.}
\]
This agrees with the extension of the cluster character to Frobenius category 
given by Fu and Keller, cf. Theorem~\cite[Theorem 3.3]{FuKeller}.

\section{Number of submodules } \label{S:Number of submodules}

As in the previous section, let $\CC = \CC_{(Q,W)}$ be a generalized cluster category and $T = \bigoplus_{i=1}^n T_i$ be a cluster-tilting object of $\CC$ with pairwise non-isomorphic indecomposable summands $T_i$, and $B_T = \End_\CC (T)$ the cluster-tilted algebra associated to $T$. Then $B_T=kQ/I$ and $Q$ is mutation equivalent to type $A_n$ \cite{CalderoChapotonSchiffler,SchifflerBook}.
All indecomposable modules of $kQ/I$ are string modules. 
Let $M$ be an indecomposable module over $B_T$, or equivalently, an irreducible representation of the quiver $Q$.  

Let $s(M)$ denote the  number of submodules of $M$. For representations of quivers, we use notation following \cite{SchifflerBook}. 
Let $S$ be a simple in the support of $M$. Then $S$ is a \emph{valley} if  $S$ belongs to $\mathrm{soc}(M)$ and $S$ is a \emph{peak} if $S$ belongs to $\mathrm{top}(M) = M/\rad(M)$. We number the simples appearing as peaks and valleys as $S(i)$, $i=1, \ldots, m$. Define further modules $N_i$ for $i=1, \ldots, m$ as follows:
\[ N_i= \begin{cases} \!\begin{aligned}
       & \bullet \text{   max. uniserial submodules of $M$ containing $S(i)$ and extending $S(i+1)$ } \\
       & \phantom{\bullet} \text{  if $S(i)$ is a  valley,}
    \end{aligned}            \\
        \!\begin{aligned}
       & \bullet \text{ max. uniserial submodules of $M$ containing $S(i+1)$ and extending $S(i)$} \\
       & \phantom{\bullet} \text{ if $S(i)$ is a  peak.}
    \end{aligned} 
\end{cases}
\]

We call the $N_i$ the \emph{legs} of $M$ (see Fig.~\ref{F:string}). Then we set $k_i:=l(N_i)$ for $i=1, \ldots, m$, where $l(N_i)$ denotes the composition length of the uniserial module $N_i$. Note that $s(M)$ is determined by the sequence $ k_1, \ldots,  k_m$. We say that  $M$ is of \emph{shape} $(k_1, k_2, \ldots, k_m)$ and by abuse of notation write $s(k_1, \ldots, k_m)$ for the number of submodules of  a module $M$ of shape $(k_1, \ldots, k_m)$. \\
Note here that  $s(M)$ only depends on the shape of $M$: for any string module $M$, the number of submodules is equal to the number of quotients, since submodules $M'$ are in bijection to quotients $M''$ by the short exact sequence $0 \xrightarrow{} M' \xrightarrow{} M \xrightarrow{} M'' \xrightarrow{} 0$.
Suppose that $M$ is of shape $(k_1, \ldots, k_m)$ and starts with a peak. Denote by $M^\vee$ a module of the same shape but starting with a valley. Then $s(M)$ equals the number of quotients of $M$. The quotients of $M$ are clearly in bijection to the submodules of $M^\vee$, thus $s(M)=s(M^\vee)$.

\begin{figure}[h]
\includegraphics[width=\textwidth]{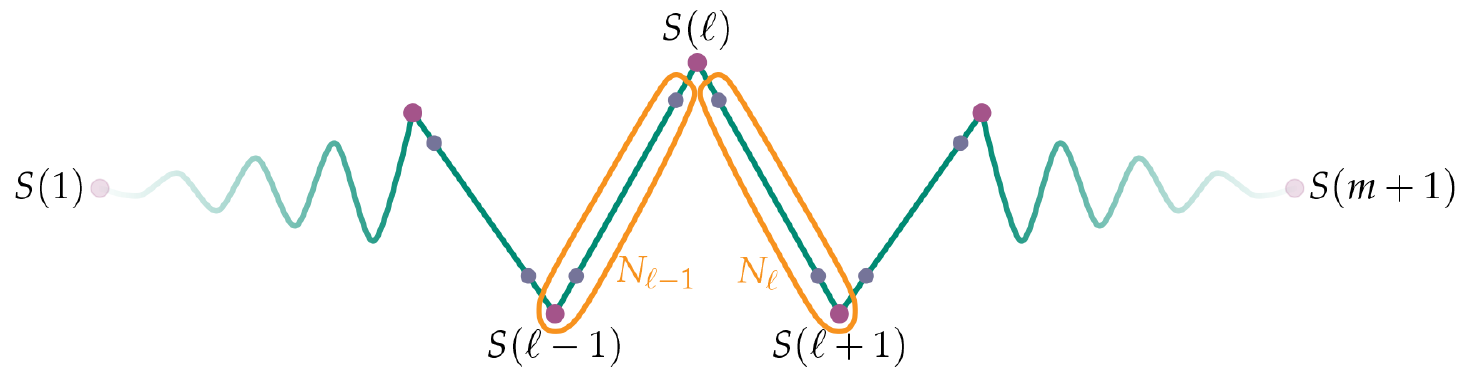}
\caption{A string module $M$ of shape $(k_1, \ldots, k_m)$ with legs $N_l$.}
\label{F:string}
\end{figure}

 Note that the number of arguments of $s(M)$ depends on  the number of legs of $M$. Sometimes we need more information about $M$, namely the orientations and particular simples in the support of the legs, then we write
$$M=(1_1 \leftarrow 1_2 \leftarrow \cdots \leftarrow 1_{k_1} \leftarrow 2_1 \rightarrow 2_2 \rightarrow \cdots \rightarrow 2_{k_2} \rightarrow 3_1 \leftarrow 3_2 \leftarrow \cdots),$$
if $M$ starts with a peak, i.e.\ $S(2)$ is a peak. 
The $i_j$ are contained in $Q_0$. Note that here $S(1)=S_{1_1}$, $S(2)=S_{2_1}$, etc. 
Written differently, see Fig.~\ref{F:stringdetail}:

\begin{figure}[h]
\includegraphics[scale=0.85]{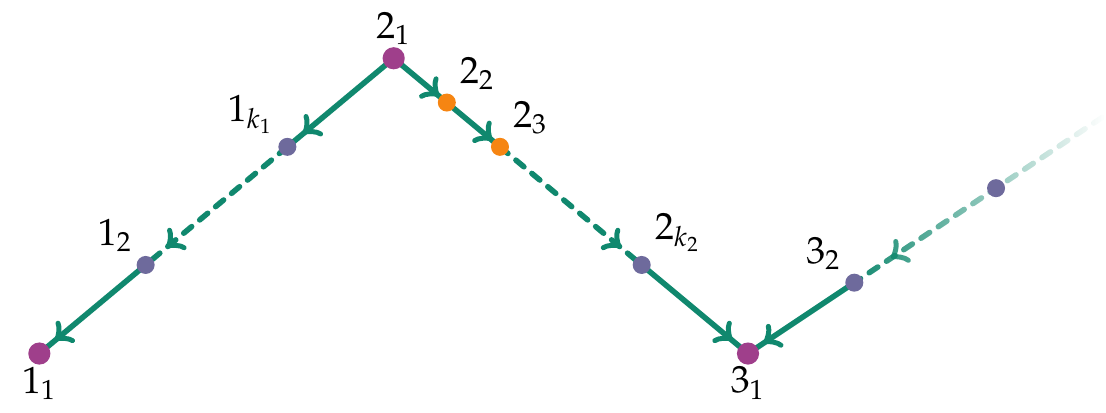}
\caption{Starting at $S(1)=S_{1_1}$. Here $i_j$ denotes the $j$-th simple in the $i$-th leg of $M$.}
\label{F:stringdetail}
\end{figure}

\vspace{0.2cm}
For the number of submodules of $M$ of shape $(k_1, \ldots, k_m)$ we will first derive a recursive formula in terms of certain subrepresentations of $M$ in Lemma \ref{Lem:recursion} and then an explicit formula, only depending on the numbers $k_i$ in Thm.~\ref{Prop:explicitsubmodules}. The proof of the following lemma is straightforward:

\begin{lemma} \label{Lem:uniserial}
If $M$ is uniserial, that is, it is of the shape $(k_1)$ for some $k_1 \in \mathbb{N}$ then $s(M)=k_1+2=l(M)+1$. 
In particular, if $M$ is simple, then $s(M)=2$. 
\end{lemma}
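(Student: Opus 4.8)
The plan is to invoke the defining property of uniserial modules: a module is uniserial precisely when its lattice of submodules is totally ordered by inclusion. First I would recall that for a module $N$ whose submodules form a chain, there is exactly one submodule of each length $0, 1, \dots, l(N)$ --- namely the successive terms of the unique composition series --- and that no two distinct submodules can have the same length, since two submodules of equal length in a chain must coincide. This counts the submodules directly and yields $s(M) = l(M) + 1$ for any uniserial $M$, which already establishes the second equality in the statement.

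The second step is to express $l(M)$ in terms of the shape parameter $k_1$. By the conventions fixed just before the lemma, a module of shape $(k_1)$ has a single leg $N_1$, read off as the $k_1$ successive simples $S_{1_1}, \dots, S_{1_{k_1}}$ starting from the valley $S(1) = S_{1_1}$; the extremal peak $S(2) = S_{2_1}$ sitting at the opposite end is the turning point of the string and is \emph{not} counted among these $k_1$ composition factors. Hence the total composition length is $l(M) = l(N_1) + 1 = k_1 + 1$, and substituting into $s(M) = l(M) + 1$ gives $s(M) = k_1 + 2$, as claimed. For the final assertion I would specialize: a simple module is uniserial of length $1$, corresponding to $k_1 = 0$ (an empty leg together with its single extremal simple), so its only submodules are $0$ and $M$, giving $s(M) = 2 = 0 + 2$.

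Since the lemma is elementary, there is no serious obstacle; the only point requiring care is the bookkeeping between the leg length $k_1$ and the composition length $l(M)$. One must remember that the extremal simple at which the string turns is shared in the leg-counting of adjacent legs and is therefore not absorbed into $k_1$, which is exactly why $l(M) = k_1 + 1$ rather than $l(M) = k_1$. Everything else reduces to the standard lattice-theoretic fact about uniserial modules, so I would keep the write-up short and emphasize precisely this indexing convention to avoid an off-by-one error.
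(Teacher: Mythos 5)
Your proof is correct and is precisely the argument the paper leaves out (the lemma is prefaced only by ``The proof of the following lemma is straightforward''): the submodules of a uniserial module form a chain, one of each length, giving $s(M)=l(M)+1$, and the paper's leg convention makes $l(M)=k_1+1$ because the extremal simple $S(2)=S_{2_1}$ at the turning end of the string is not absorbed into the leg $N_1$. Your emphasis on this off-by-one bookkeeping is exactly the right point of care, and it is confirmed by the paper's own conventions (Figure 2, where leg $i$ consists of the $k_i$ simples $i_1,\dots,i_{k_i}$) and by the worked example of shape $(1,3,1)$, whose composition length is $1+3+1+1=6$.
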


For modules $M$ with $m \geq 2$ one can express the number of submodules of $M$ in terms of submodules $M_{S(2)}$ and $\overline{M}_{S(2)}$ in $M$. Therefore define the following: if $S(1)$ is a valley, let $M_{S(2)}$ be the maximal (proper) indecomposable submodule of $M$ containing $S(m+1)$ 
and $\overline{M}_{S(2)}$ the maximal (proper) indecomposable 
submodule of $M_{S(2)}$ containing $S(m+1)$. If $S(1)$ is a peak, define $M_{S(2)}$, 
$\widetilde{M}_{S(2)}$ dually as quotients: let $M_{S(2)}$ be the maximal (proper) quotient of $M$  containing $S(m+1)$ and $\widetilde{M}_{S(2)}$ the maximal (proper) quotient of $M_{S(2)}$ 
containing $S(m+1)$.   

\begin{ex}
Let $M=(1 \leftarrow 2 \rightarrow 3 \rightarrow 4 \rightarrow 5 \leftarrow 6)$. Here $M$ is of shape $(1, 3, 1)$, that is, $k_1=1, k_2=3, k_3=1$.  The submodules of $M$ (without indicated arrows) are: 
\begin{align*} 
&0, & &(3456), &  &(345), & &(456), & &(45) & &(56), & &(5), \\ 
&(1),& &(1)\oplus(3456), &  &(1)\oplus(345), & &(1) \oplus(456),& &(1)\oplus(45), & &(1) \oplus(56), & &(1) \oplus (5), \\
&   & &(12345), & &(123456). & & & & & & &
\end{align*}
These are $16$ submodules. We have 
\[ M_{S(2)}=(3 \rightarrow 4 \rightarrow 5 \leftarrow 6)  \text{ and } \overline{M}_{S(2)}=(4 \rightarrow 5 \leftarrow 6).\]
Then the submodules of $M$ can be partitioned into submodules of $M_{S(2)}$, $(1) \oplus$ submodules of $M_{S(2)}$ and submodules of $M_{S(2)}$ that  contain the simple $3$ glued to the first leg $1 \leftarrow 2$ (these are the two modules $(1 \leftarrow 2 \rightarrow 3 \rightarrow 4 \rightarrow 5)$ and $(1 \leftarrow 2 \rightarrow 3 \rightarrow 4 \rightarrow 5 \leftarrow 6)$). The cardinality of the first set is $2s(M_{S(2)})$. The others can be seen as submodules of $M_{S(2)}$ minus the ones not containing $3$, so there are  $s(M_{S(2)}) - s(\overline{M}_{S(2)})$ many of them. The total number of submodules of $M$ is  $s(M)=3s(M_{S(2)})-s(\overline{M}_{S(2)})=3 \cdot 7 -  5=16$.
\end{ex}

\begin{lemma} \label{Lem:recursion}
The number of submodules of $M$ of shape $(k_1, \ldots, k_m)$ is given by
$$s(M)=(k_1 +1) s(M_{S(2)}) + s(M_{S(2)}) - s(\overline{M}_{S(2)})=(k_1+2)s(M_{S(2)})-s(\overline{M}_{S(2)}),$$
where $M_{S(2)}$ and $\overline{M}_{S(2)}$ are as defined above. 
In particular, if $k_2 >1$, then $M_{S(2)}$ is of shape $(k_2-1, k_3, \ldots, k_m)$,  and $\overline{M}_{S(2)}$ is of shape $(k_2-2,k_3, \ldots, k_m)$. If $k_2=1$, then $M_{S(2)}=(S(3) - \cdots - S(m))$ is of shape $(k_3, \ldots, k_m)$ and $\overline{M}_{S(2)}$ is of shape $(k_4 -1, \ldots, k_m)$.
\end{lemma}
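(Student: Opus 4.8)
The plan is to count the submodules of $M$ directly, partitioning them according to how they meet the first leg $N_1$ and its peak $S(2)$, exactly as in the example preceding the statement. By the duality $s(M)=s(M^\vee)$ noted above, I may assume without loss of generality that $S(1)$ is a valley; then the first leg is a descending uniserial chain $S(2)=u_{k_1}\to u_{k_1-1}\to\cdots\to u_1\to u_0=S(1)$ with the peak $S(2)$ at its top, and the remaining legs hang off $S(2)$ on the opposite side. Since each simple occurs in $M$ with multiplicity at most one, $M$ is thin, so every submodule is spanned by a subset of the standard string basis closed under the arrow action; I call these the \emph{closed subsets} of the string (equivalently, the order ideals of the fence poset whose covers are the arrows). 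A uniserial module of length $\ell$ has exactly $\ell+1$ of them, which is Lemma~\ref{Lem:uniserial}. Throughout I use that $M_{S(2)}$ is, by its definition, the indecomposable piece containing $S(m+1)$ of the string obtained by deleting the peak $S(2)$, that is, $M$ with its first leg and $S(2)$ removed.

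First I would split the closed subsets $N$ according to whether $S(2)\in N$. If $S(2)\notin N$, then deleting $S(2)$ separates the string into the first-leg chain $u_{k_1-1}\to\cdots\to u_0$ (empty if $k_1=0$) and $M_{S(2)}$; since $N$ lies in their disjoint union, it is uniquely a union of a closed subset of the chain and a closed subset of $M_{S(2)}$, with no cross-constraints. The chain has $k_1$ vertices and hence $k_1+1$ closed subsets, so this case contributes exactly $(k_1+1)\,s(M_{S(2)})$ submodules; these are the submodules of $M_{S(2)}$ and the copies obtained by gluing on part of the first leg.

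Next I would count the $N$ with $S(2)\in N$. Closedness then forces $N$ to contain the entire first leg together with the vertex $v^\ast$ of $M_{S(2)}$ into which $S(2)$ maps, namely the top of the second leg; conversely, adjoining the full first leg and $S(2)$ to any closed subset of $M_{S(2)}$ containing $v^\ast$ yields such an $N$, and this is a bijection. Hence this case contributes $s(M_{S(2)})$ minus the number of closed subsets of $M_{S(2)}$ that avoid $v^\ast$, and the crux of the proof is the identity
\[
\#\{\,N'\subseteq M_{S(2)} : v^\ast\notin N'\,\}=s(\overline{M}_{S(2)}).
\]
Granting it, the two cases combine to
\[
s(M)=(k_1+1)s(M_{S(2)})+\bigl(s(M_{S(2)})-s(\overline{M}_{S(2)})\bigr)=(k_1+2)s(M_{S(2)})-s(\overline{M}_{S(2)}),
\]
as asserted.

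It remains to pin down the shapes and to prove the boxed identity, and here the regimes $k_2>1$ and $k_2=1$ diverge; I expect this case split to be the main obstacle. Deleting $S(2)$ shortens the second leg to length $k_2-1$ when $k_2>1$, so $M_{S(2)}$ has shape $(k_2-1,k_3,\dots,k_m)$, and removes it entirely when $k_2=1$, giving shape $(k_3,\dots,k_m)$. When $k_2>1$ the vertex $v^\ast$ is the endpoint peak of $M_{S(2)}$, so the closed subsets avoiding it are exactly those of $M_{S(2)}$ with this peak deleted; as $v^\ast$ is an endpoint nothing splits off, and this module is $\overline{M}_{S(2)}$ of shape $(k_2-2,k_3,\dots,k_m)$. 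When $k_2=1$ the vertex $v^\ast=S(3)$ is instead a minimal (valley) endpoint, so a closed subset avoids it only by excluding its whole up-set, the third leg together with $S(4)$; deleting this up-set leaves precisely $\overline{M}_{S(2)}$, of shape $(k_4-1,\dots,k_m)$, and once more the closed subsets avoiding $v^\ast$ are exactly those of $\overline{M}_{S(2)}$. The one genuinely delicate point is this second regime: excluding a valley cascades \emph{upward} along its leg rather than deleting a single vertex as in the generic peak case, and one must check that what survives is connected and is exactly $\overline{M}_{S(2)}$.
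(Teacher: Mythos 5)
Your proof is correct and takes essentially the same route as the paper's: the same reduction to the case where $S(2)$ is a peak, the same partition of submodules according to whether they contain $S(2)$ (giving the $(k_1+1)s(M_{S(2)})$ term), and the same bijection identifying the remaining submodules with submodules of $M_{S(2)}$ that contain the simple just below $S(2)$. The only difference is one of detail: you explicitly verify the crux identity that submodules of $M_{S(2)}$ avoiding that simple are counted by $s(\overline{M}_{S(2)})$, including the upward cascade in the $k_2=1$ case, which the paper asserts without proof.
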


\begin{proof}  Without loss of generality we may assume that ${\begin{smallmatrix} & S(2) & \\N_1 & & N_2\end{smallmatrix}}$ is a peak. \\
Now the number of submodules of $M$ can be split into two parts: the first ones are direct sums of submodules of $M_{S(2)}$ and submodules of $N_1$, that is 
\begin{align*}0 & \oplus \{ \text{submodules of }M_{S(2)} \}, S(1) \oplus \{ \text{submodules of }M_{S(2)} \},   \\
(1_1\leftarrow 1_2) & \oplus \{ \text{submodules of }M_{S(2)} \},  \ldots, N_1 \oplus \{\text{ submodules of }M_{S(2)} \} \ ,
\end{align*}
 which makes a total of $(l(N_1)+1) \cdot s(M_{S(2)})=(k_1+1)s(M_{S(2)})$ submodules. Note here that we counted the zero-module in this part, as $0 \oplus 0$. The remaining submodules are of the form ${\begin{smallmatrix} & S(2) & \\N_1 & & N_2\end{smallmatrix}}$. These are the submodules of $M$ that contain the first simple of $N_2$ below of $S(2)$. One can easily see that if $N \subseteq M$ contains some simple $2_i$ in $N_2$, it has to contain the whole leg $N_2$. Moreover if $N$ is a submodule of $M$ containing $S(2)$, then $N$ also has to contain ${\begin{smallmatrix}S(2) \\N_1\end{smallmatrix}}$ [see this with writing down the quiver representations].  
Submodules of $M$ are then ${\begin{smallmatrix}S(2) \\N_1\end{smallmatrix}}$ ``glued to'' submodules of $M_{S(2)}$ which also contain the simple just below $S(2)$. The number of those submodules is given by $s(M_{S(2)})-s(\overline{M}_{S(2)})$. Thus in total we get 
$$s(M)=(k_1 + 1)s(M_{S(2)}) + \left(s(M_{S(2)})-s(\overline{M}_{S(2)})\right)=(k_1+2)s(M_{S(2)})-s(\overline{M}_{S(2)}).$$
\end{proof}

Before giving an explicit formula we introduce some notation. 

\begin{defi}
Let $\underline{m}=\{1, \ldots, m\}$. Let $I$ be a subset in $\underline{m}$. We may assume that the elements in $I$ are ordered, i.e., $i_1 < i_2 < \cdots$. The \emph{interior} of $I=\{ i_1, \ldots, i_l\}$ are the integers $i_2, \ldots, i_{l-1}$. Then $I \subseteq \underline{m}$ is called \emph{$\underline{m}$-admissible} if ``gaps'' in $I$ come in pairs $i, (i+1)$ in the interior of $I$.  This means in particular that two consecutive numbers $i_j, i_{j+1}$ in $I$,  are either even--odd or odd-even. Note that $\emptyset$ and all $\{ i \} \subseteq \underline{m}$ are admissible. 
\end{defi}

\begin{ex}
For $m=5$ the admissible sets are $12345$, $1234$, $2345$, $123$, $125$, $145$, $345$, $234$, $12$, $14$, $34$, $23$, $25$, $45$, $1$, $2$, $3$, $4$, $5$, $\emptyset$.
\end{ex}

\begin{theorem} \label{Prop:explicitsubmodules} 
Let $M$ be of shape $(k_1, \ldots, k_m)$.  Then 
$$s(M)=\prod_{i=1}^mk_i + \sum_{\stackrel{ I \subset \underline{m}, |I|=m-1,}{I \text{ admissible}}}\left( \prod_{i \in I} k_i \right) +  \sum_{\stackrel{I \subset \underline{m}, |I|=m-2,}{ I \text{ admissible}}}\left( \prod_{i \in I} k_i\right) + \cdots + \sum_{i=1}^m k_i + 2.$$
In a more compact form:
\begin{equation} \label{Eq:submodFormula}
s(M)=\sum_{j=0}^m \sum_{\stackrel{I \subseteq \underline{m}, |I|=m-j,}{ I \text{ admissible}}}\left( \prod_{i \in I}k_i\right) +1 .
\end{equation}
\end{theorem}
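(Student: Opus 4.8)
The plan is to prove the equivalent statement $s(M) = G(M) + 1$, where I abbreviate $G(M) = \sum_{I \subseteq \underline{m}\text{ admissible}} \prod_{i \in I} k_i$; since the empty set is admissible and contributes the empty product $1$, the right-hand side of \eqref{Eq:submodFormula} is exactly $G(M)+1$. I would argue by induction on the number $m$ of legs, using the recursion of Lemma \ref{Lem:recursion}. The cases $m=0$ (the zero module, $s=1$) and $m=1$ (Lemma \ref{Lem:uniserial}, $s=k_1+2=G+1$) are checked directly and serve as base cases; for the inductive step it then suffices to verify that the closed form $G+1$ obeys the same two-case recursion as $s$.

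The structural fact I would isolate first is that admissibility of $I$ depends only on the consecutive \emph{differences} being odd (equivalently, consecutive elements have opposite parity), so it is invariant under translating $I$; this lets me identify sums over $\{3,\dots,m\}$, $\{4,\dots,m\}$, etc.\ with genuine $G$-values of shorter shapes. I would then record two bookkeeping identities obtained by splitting the defining sum according to $I\cap\{1,2\}$. Writing $A$ (resp.\ $B$) for the sum of $\prod_{j\in J}k_j$ over admissible $J\subseteq\{3,\dots,m\}$ whose smallest element is odd (resp.\ even), with $\emptyset$ counted in both, one gets $A+B = 1 + G(k_3,\dots,k_m)$, the master decomposition
\[
G(k_1,\dots,k_m) = G(k_3,\dots,k_m) + k_1 B + k_2(1+k_1)A,
\]
and, for any value $c$, the single-step identity $G(c,k_3,\dots,k_m) = G(k_3,\dots,k_m) + cA$ (proved by splitting off the first position and translating the tail).

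With these in hand the inductive step becomes pure algebra. When $k_2>1$, Lemma \ref{Lem:recursion} reduces $s(M)$ to the shapes $(k_2-1,k_3,\dots,k_m)$ and $(k_2-2,k_3,\dots,k_m)$; applying the single-step identity with $c=k_2-1$ and $c=k_2-2$ and substituting into $(k_1+2)(G'+1)-(G''+1)$, everything collapses onto the master decomposition once $B$ is eliminated via $B=1+G(k_3,\dots,k_m)-A$. When $k_2=1$, the reduction is to $(k_3,\dots,k_m)$ and $(k_4-1,k_5,\dots,k_m)$, and after the same substitution the needed equality boils down to $G(k_4-1,k_5,\dots,k_m)=B-1$; I would establish this by decomposing $G(k_4-1,k_5,\dots,k_m)$ by its first position exactly as for the single-step identity and matching it term-by-term with the even-smallest-element part of $B$.

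I expect the main obstacle to be precisely this last identity in the $k_2=1$ case, together with the parity bookkeeping as legs are peeled off: the decrement $k_4-1$ on the reduced shape is exactly what cancels an empty-set contribution, and one must track the parity of the smallest retained index carefully when passing between $\{3,\dots,m\}$, $\{4,\dots,m\}$ and $\{5,\dots,m\}$. The remaining nuisances are purely boundary phenomena — shapes with a leading leg of length $0$ (arising when $k_2=2$) and reductions to the zero module (arising for small $m$, e.g.\ $m=3$ with $k_2=1$) — which I would dispatch using the conventions $s(\text{zero})=1$ and $G(0,k_3,\dots,k_m)=G(k_3,\dots,k_m)$, and by treating $m\le 2$ by hand.
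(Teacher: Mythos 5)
Your proposal is correct and takes essentially the same route as the paper's proof: induction on $m$ via the recursion of Lemma \ref{Lem:recursion}, with the same split into the cases $k_2>1$ and $k_2=1$, and the same bookkeeping of admissible sets by the parity of their minimal element --- your $A$ and $B$ are exactly the paper's sums over ``$2$-admissible'' and remaining admissible sets, your single-step identity $G(c,k_3,\dots,k_m)=G(k_3,\dots,k_m)+cA$ is the paper's displayed formula for $s(k_2-l,k_3,\dots,k_m)$, and your key identity $G(k_4-1,k_5,\dots,k_m)=B-1$ in the $k_2=1$ case is precisely the paper's claim that $s(k_3,\dots,k_m)-s(k_4-1,\dots,k_m)=A$. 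The difference is purely organizational: you isolate translation invariance, the master decomposition, and the single-step identity as standalone lemmas, whereas the paper carries out the identical manipulations inline.
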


\begin{proof}
The proof is by induction on $m$. If $m=1$, then $M$ is uniserial and $s(M)=k_1+2$ by Lemma \ref{Lem:uniserial}. The formula \eqref{Eq:submodFormula} reads  for $\underline{1}=\{1\}$, and all $I \subseteq \underline{1}$ are admissible:
$$k_1 + \prod_{i \in \emptyset}k_i +1 = k_1 +2.$$  
For $m=2$, that is, $M$ is a peak, one can use the recursion formula from lemma \ref{Lem:recursion}: here $M_{S(2)}=N_2=(2_2\to 2_3- \cdots \to 2_{k_2-1} \to S(3))$ and $\overline{M}_{S(2)}=(2_3 \to \cdots \to 2_{k_2-1} \to S(3))$. Both $M_{S(2)}$ and $\overline{M}_{S(2)}$ are uniserial. \  Then
 \begin{align*}
s(M) & =(k_1+2)s(M_{S(2)})-s(\overline{M}_{S(2)}) = (k_1+2)(k_2+1)-k_2  \\
& = k_1 k_2 + k_1 + k_2 +2 \ ,
\end{align*}
since $s(M_{S(2)})=l(M_{S(2)})+1=k_2+1$ and $s(\overline{M}_{S(2)})=l(\overline{M}_{S(2)})+1=(k_2-1)+1$. Note that if $k_2=1$, then $\overline{M}_{S(2)}$ is the zero-module  and we have $s(\overline{M}_{S(2)})=1$ in this case. \\
For $m=2$, that is, $M$ of shape $(k_1,k_2)$, the right hand side of \eqref{Eq:submodFormula} gives
\[ k_1 k_2 + k_1 +k_2 +2, \]
since all $I \subseteq \underline{2}$ are admissible. So the formula holds for $m=2$. Suppose now that the formula holds for $m-1$ legs and let  $M$ be of shape $(k_1, \ldots, k_m)$, i.e., have $m$ legs. By lemma \ref{Lem:recursion}
\begin{equation} \label{Eq:submodRec}
s(M)=(k_1 +2) s(M_{S(2)}) - s(\overline{M}_{S(2)}).
\end{equation}
We have to consider $2$ cases:
 (i) $k_2 >1$ and (ii) $k_2=1$, since the shape of $M_{S(2)}$ and $\overline{M}_{S(2)}$ are different in the second case. \\
For case (i) we group the right hand side of \eqref{Eq:submodRec} as 
\begin{equation} \label{Eq:RecursionInduction}
\underbrace{(k_1 +1) s(k_2-1, k_3, \ldots, k_m)}_{(*)} + \underbrace{(s(k_2-1, k_3, \ldots, k_m) - s(k_2 -2, k_3, \ldots, k_m))}_{(**)}.
\end{equation}
We may assume that the $k_i$ appearing in the products are ordered with respect to $i$, i.e., $k_{i_1} k_{i_2} \cdots k_{i_s}$ with $i_1 < \ldots< i_s$. From the definition of $\underline{m}$-admissible sets it follows that in each $I$, $i_l$ has to be followed by an odd $i_{l+1}$ and the same for odd $i_l$  has to be followed by an even $i_{l+1}$. For brevity, throughout the rest of this proof, we will simply write admissible, whenever we mean $\underline{m}$-admissible. 
We call an admissible set $I \subseteq \underline{m}$ {\it $2$-admissible} (short: $2-a.$), if it does not contain $1$ and starts with an odd $i>1$ (that is, some $k_i$, where $i$ is odd) or is empty. Thus admissible sets of $\underline{m}$ (or of $(k_1, \ldots, k_m)$) for $(k_2, \ldots, k_m)$ are partitioned into $2$-admissible sets and non-$2$-admissible sets not containing $1$. 
Then for any integer $0 \leq l \leq k_2$ we have using \eqref{Eq:submodFormula}
{\footnotesize
\[  
s(k_2-l,k_3, \ldots, k_m)  =\sum_{j=0}^{m-1}\left( (k_2-l+1) \sum_{\stackrel{I \subseteq \underline{m} \  2-\text{a.},}{|I|=m-j-1}}\prod_{i \in I}k_i + \sum_{\stackrel{I \subseteq \underline{m} \ \text{adm. but not $2$-a.},}{ \{1,2\} \not \subseteq I, |I|=m-j-1}}\prod_{i \in I}k_i\right) +1.
\]
}
Thus $(*)$ in \eqref{Eq:RecursionInduction} is
{\footnotesize 
\begin{align} \label{Eq:induction1}
(k_1 +1) s(k_2-1, k_3, \ldots, k_m)= (k_1 +1)\sum_{j=0}^{m-1} \left( k_2 \sum_{\stackrel{I \subseteq \underline{m} \  2-\text{a.},}{|I|=m-j-1}}\prod_{i \in I}k_i + \sum_{\stackrel{I \subseteq \underline{m} \ \text{adm. but not $2$-a.},}{ \{1,2\} \not \subseteq I, |I|=m-j-1}}\prod_{i \in I}k_i\right) +1.
\end{align}
}
and $(**)$ of \eqref{Eq:RecursionInduction} becomes
\begin{align} \label{Eq:induction2}
s(k_2-1, k_3, \ldots, k_m) - s(k_2 -2, k_3, \ldots, k_m) = \sum_{j=0}^{m-1} \sum_{\stackrel{I \subseteq \underline{m} \  2-\text{a.},}{|I|=m-j-1}}\prod_{i \in I}k_i.
\end{align}
Adding \eqref{Eq:induction1} and \eqref{Eq:induction2} yields
{\footnotesize 
\begin{align}
\sum_{j=0}^{m-1}\left( (k_1k_2 + k_2  + 1) \left( \sum_{\stackrel{I \subseteq \underline{m} \  2-\text{a.},}{|I|=m-j-1}}\prod_{i \in I}k_i\right) + (k_1 +1)\left( \sum_{\stackrel{I \subseteq \underline{m} \ \text{adm. but not $2$-a.},}{ \{1,2\} \not \subseteq I, |I|=m-j-1}}\prod_{i \in I}k_i \right)\right) +k_1 +1,
\end{align}
}
which is precisely the expression on the right hand side of \eqref{Eq:submodFormula}. \\
Case (ii), that is, $k_2=1$, is similar: First note that if $m=3$, then $\overline{M}_{S(2)}$ is the zero-module with $s(\overline{M}_{S(2)})=1$. The right hand side of \eqref{Eq:submodRec} is then 
$$(k_1+2)s(k_3)-s(\overline{M}_{S(2)})=(k_1 +2)(k_3+2)-1=k_1k_3 +2k_3 + 2k_1 +3 \ ,$$
which can be grouped (using $k_2=1$) to
$$k_1k_2k_3 + (k_2k_3 +k_3) +(k_1 k_2 + k_1) + k_2 +2.$$
This is the expression of \eqref{Eq:submodFormula} for $m=3$. \\
Now for $m \geq 4$, the argument is similar to case (i): set $A:=(\sum_{j=0}^{m-2} \sum_{\stackrel{I \subseteq \underline{m} \  2-\text{a.},}{|I|=m-j-2}}\prod_{i \in I}k_i +1)$ and $B:=\sum_{i=0}^{m-2}\sum_{\stackrel{I \subseteq \underline{m} \ \text{adm. but not $2$-a.},}{ \{1,2\} \not \subseteq I, |I|=m-j-2}}\prod_{i \in I}k_i$. Formula \eqref{Eq:submodFormula} can be written as
\begin{equation} k_1 k_2 A + k_1 B +k_2 A + A + B. \label{Eq:FormulaSimple}
\end{equation}
Now write the right hand side of \eqref{Eq:submodRec} as
$$ \underbrace{(k_1 +1) s(k_3, \ldots, k_m)}_{(*)} + \underbrace{s(k_3, \ldots, k_m) - s(k_4 -1, \ldots, k_m)}_{(**)}.$$
Note that $s(k_3, \ldots, k_m)=A+B$. Compute $(*)$, which is equal to $(k_1 +1)(A+B)$. Since $A$ is the sum over all $2$-admissible sets and $k_2=1$, this is also equal to
$$(k_1 +1)(k_2 A +B)=k_1k_2 A + k_2 A + k_1 B + B.$$
Further we get that $(**)$ is equal to 
$$(A+B-B) = A.$$
Adding $(*)$ and $(**)$ yields \eqref{Eq:FormulaSimple}.

\end{proof}

\section{Description of the regions in the frieze}\label{S:Description of the regions in the frieze}

\noindent
{\bf The quiver of a triangulation.} 
We recall here how to get the quiver $Q_{\mathcal T}$ of a triangulation. If $\mathcal T$ is a triangulation of an 
$n+3$-gon, we label the diagonals by $1,2,\dots, n$ and draw an arrow $i\to j$ in case 
the diagonals share an endpoint and the diagonal $i$ can be rotated clockwise to diagonal 
$j$ (without passing through another diagonal incident with the common vertex). 
This is illustrated in Example~\ref{ex:quiver-triangul-flip} and Figure~\ref{fig:triangulations-1-2} 
below. 

Let $B$ be the cluster-tilted algebra associated to $Q_{\mathcal{T}}$.  
For $x$ a vertex of this quiver, we write $P_x$ for the projective $B_T$-module 
of $x$ and $S_x$ for its simple top. 

We then have $B = \End_{\mathcal{C}} T$, where the cluster-tilting object 
\[
T=\bigoplus_{x\in \mathcal{T}} P_x 
\]
in $\mathcal C$. We can extend this to an object in the Frobenius category $\mathcal C_f$ by 
adding the $n+3$ projective-injective summands associated to the boundary segments $[12],[23]$ 
$,\dots,[n+3,1]$ 
of the polygon, 
with irreducible maps between the objects corresponding to diagonals/edges as follows: 
$[i-1,i+1]\to [i,i+1]$, $[i,i+1]\to [i,i+2]$ (\cite{JKS, BKM,DeLuo}). 
We denote the projective-injective associated to $[i,i+1]$ by $Q_{x_i}$. 
Let 
\[
{T}_f=\left( \oplus _{x\in\mathcal T} P_x\right) \oplus \left(Q_{x_1}\oplus \dots\oplus Q_{x_{n+3}}\right)
\]
This is a cluster-tilting object of $\mathcal C_f$ in the sense of~\cite[Section 3]{FuKeller}. 
Given a $B$-module $M$, by abuse of notation, 
we denote the corresponding objects in $\mathcal{C}$ and $\mathcal{C}_f$ by $M$, 
that is $\text{Hom}_{\mathcal{C}}({T}, M)=M$.  In other words, an indecomposable object of 
$\mathcal{C}_f$ is either an indecomposable $B$-module or $Q_{x_i}$ for some 
$i\in\{1,\dots, n+3\}$ 
or of the form $P_x[1]$ for some $x \in \mathcal T$ 

\subsection{Diagonal defines quadrilateral}
$\ $

Let $a$ be a diagonal in the triangulation, $a\in\{1,2,\dots, n\}$. 
This diagonal uniquely defines a quadrilateral formed by diagonals or 
boundary segments. Label them $b,c,d,e$ as in Figure~\ref{fig:local-triang}.  

\begin{figure}
\begin{center}
\begin{minipage}{65mm} \includegraphics[width=45mm]{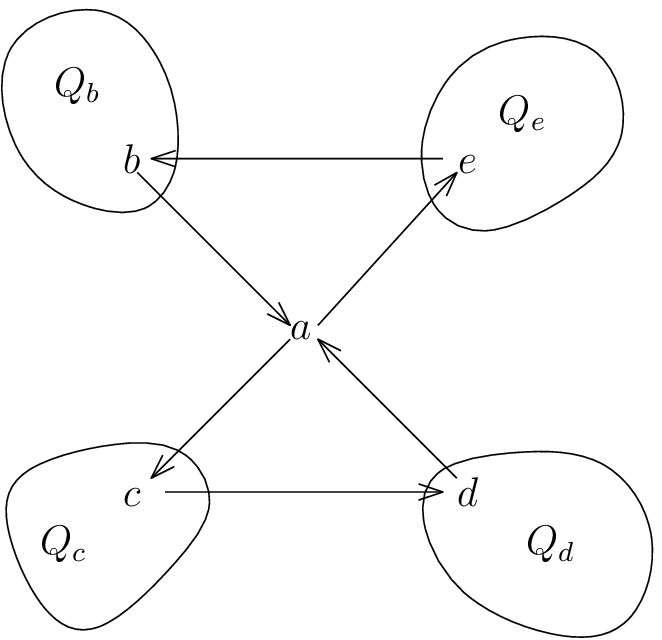}
 \caption{Regions in quiver.}\label{fig:quiver-divided}
\end{minipage}
\hfil
\begin{minipage}{65mm} \includegraphics[width=45mm]{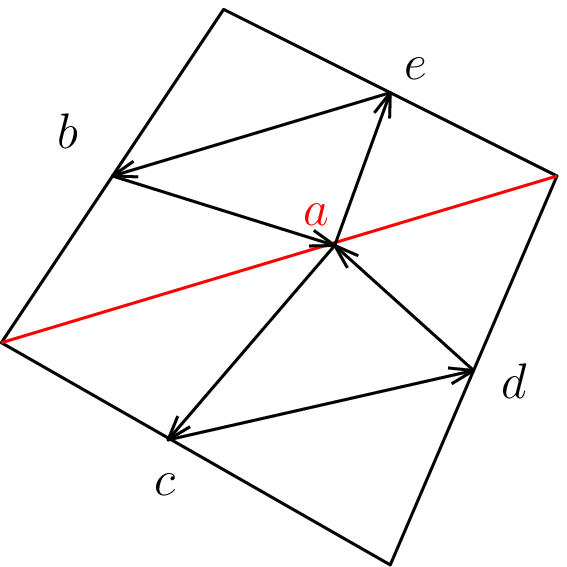} 
\caption{Triangulation around $a$.}
\label{fig:local-triang}
\end{minipage}
\end{center}
\end{figure}

\subsection{Diagonal defines two rays} \label{ssec:diagonal-rays}
$\ $

Consider the entry $1$ of the frieze corresponding to $a$. There are two rays passing through it. We go along 
these rays forwards and backwards until we reach the first entry 1. As the frieze has two rows of 
ones bounding it, we will always reach an entry 1 in each of these four directions. 
Going forwards and upwards: the first occurrence of $1$ corresponds to the diagonal $b$. Down and 
forwards: diagonal $d$. Backwards down from the entry corresponding to $1$: diagonal $c$ and backwards up: 
diagonal $e$. 
If we compare with the coordinate system for friezes of Section~\ref{sec:intro}, the two rays 
through the object corresponding 
to diagonal $a=[kl]$ are the entries $m_{i,l}$ (with $i$ varying) and $m_{k,j}$ (with $j$ varying). 

\begin{ex}
For illustration, we consider the triangulation $\mathcal T=\{[25],[35],[15]\}$ of a hexagon. 
In the frieze pattern associated to $\mathcal T$, we have entries $1$ at 
$a=[25]$, at the diagonals $b=[15]$ and $c=[35]$ as well as at the entries corresponding to edges $d=[23]$ 
and $e=[12]$ (Section~\ref{ssec:tri-frieze}). 
In the figure, the vertex for $a$ is in a circle, the vertices for $b,c,d,e$ are in boxes. 
\[
\tiny{
 \xymatrix@=0.5em{ 
    \sqbox{$e=[21]$}\ar[rd] && [16] && [65] && [54] && [43] && \sqbox{$d=[32]$}    \\
    & [26]\ar[rd] && \sqbox{$b=[15]$}  && [64] && \sqbox{$c=[53]$}\ar[rd] && [42]\ar[ru] \\
    [36] && \circled{[25]}\ar[rd]\ar[ru] && [14] && [63] &&  \circled{[52]}\ar[rd]\ar[ru] && [41]   \\
    & \sqbox{$c=[35]$}\ar[ru] && [24]\ar[rd] && [13] && [62]\ar[ru] && \sqbox{$b=[51]$} \\
     [45] && [34] && \sqbox{$d=[23]$} && \sqbox{$e=[12]$}\ar[ru] && [61] && [56] 
  }}
\]

In the AR quiver of $\mathcal{C}_f$, the entries for $a,b$ and $c$ are the shifted projectives $P_a[1]$, $P_b[1]$ and $P_c[1]$. 
The entries for $d,e$ are the projective-injectives $Q_{x_2}$ and $Q_{x_1}$. 
\end{ex}

In the frieze or in the AR quiver, we give the four segments between the entry $1$ corresponding to 
$a$ and the entries corresponding to $b,c,d$ and $e$ names (see Figure~\ref{fig:regions} for a larger example containing these paths). 
Whereas $a$ is always a diagonal, $b,c,d,e$ may be boundary segments. If $b$ is a diagonal, the ray through $P_a[1]$ goes through 
$P_b[1]$, and if $b$ is a boundary segment, say $b=[i,i+1]$ (with $a=[ij]$) this ray goes through $Q_{x_i}$. By abuse of notation, it will be more convenient to 
write this projective-injective as $P_b[1]$ or as $P_{x_i}[1]$ (if we want to emphasize that it is an object 
of the Frobenius category $\mathcal C_f$ that does not live in $\mathcal C$).

Let $\mathfrak{e}$ and $\mathfrak{c}$ denote the unique sectional paths in $\mathcal{C}_f$ starting at $P_a[1]$ and ending at $P_b[1]$ and 
$P_d[1]$ respectively, but not containing $P_b[1]$ or $P_d[1]$. 
Similarly, let $\mathfrak{b}$ and $\mathfrak{d}$ denote the sectional paths in $\mathcal{C}_f$ starting at $P_e[1]$ and $P_c[1]$ respectively and 
ending at $P_a[1]$, not containing 
$P_e[1]$, $P_c[1]$, see Figure~\ref{fig:regions}.

Note that $b$ and $d$ are opposite sides of the quadrilateral determined by $a$. In particular, the corresponding diagonals do not share endpoints. 
In other words, $P_b[1]$ and $P_d[1]$ do not lie on a common ray in the AR quiver. 
So by the combinatorics of $\mathcal{C}_f$ there exist two distinct sectional paths
starting at $P_b[1], P_d[1]$. These sectional paths both go through 
$S_a$.  Let $\mathfrak{c}^a, \mathfrak{e}^a$  denote these paths starting at $P_b[1]$ and at $P_d[1]$, up to $S_a$, 
but not including $P_b [1], P_d[1]$ respectively. 
Observe that the composition of $\mathfrak{e}$ with $\mathfrak{c}^a$ and the composition of $\mathfrak{c}$ with $\mathfrak{e}^a$ are not sectional, 
see Figure~\ref{fig:regions}. 
Similarly, let $\mathfrak{d}_a,\mathfrak{b}_a$ denote the two distinct sectional paths starting at $S_a$ 
and ending at $P_e[1], P_c[1]$ respectively but not including $P_e[1], P_c[1]$.  Note that the composition of $\mathfrak{c}^a$ with 
$\mathfrak{b}_a$ and the composition of $\mathfrak{e}^a$ with $\mathfrak{d}_a$ are not sectional.

\subsection{Diagonal defines subsets of indecomposables}\label{ssec:regions}
$\ $

For $x$ a diagonal in the triangulation $\mathcal{T}$ and $P_x$ the corresponding projective indecomposable, 
we write $\mathcal X$ for the set of indecomposable $B$-modules having 
a non-zero homomorphism from $P_x$ into them, 
$\mathcal X=\{M \in \mbox{ind}\,B\mid \Hom(P_x,M)\ne 0\}$.  Given a $B$-module $M$, its \emph{support} is the full subquiver $\text{supp}(M)$ 
of $Q_{\mathcal{T}}$ 
generated by all vertices $x$ of $Q_{\mathcal{T}}$ such that $M\in \mathcal{X}$.  
It is well known that the support of an indecomposable module is connected.

If $x$ is a boundary segment, we set $\mathcal X$ to be the empty set (there is no projective 
indecomposable associated to $x$, so there are no indecomposables reached). 

We use the notation above to describe the regions in the frieze. 
Thus, if $x,y$ are diagonals or boundary segments, we write 
$\mathcal X\cap \mathcal Y$ for the indecomposable objects in $\mathcal{C}$ that 
have $x$ and $y$ in their support. 


\begin{figure}
\hspace*{-3cm}\scalebox{1}{\begingroup%
  \makeatletter%
  \providecommand\color[2][]{%
    \errmessage{(Inkscape) Color is used for the text in Inkscape, but the package 'color.sty' is not loaded}%
    \renewcommand\color[2][]{}%
  }%
  \providecommand\transparent[1]{%
    \errmessage{(Inkscape) Transparency is used (non-zero) for the text in Inkscape, but the package 'transparent.sty' is not loaded}%
    \renewcommand\transparent[1]{}%
  }%
  \providecommand\rotatebox[2]{#2}%
  \ifx\svgwidth\undefined%
    \setlength{\unitlength}{561.59197587bp}%
    \ifx\svgscale\undefined%
      \relax%
    \else%
      \setlength{\unitlength}{\unitlength * \real{\svgscale}}%
    \fi%
  \else%
    \setlength{\unitlength}{\svgwidth}%
  \fi%
  \global\let\svgwidth\undefined%
  \global\let\svgscale\undefined%
  \makeatother%
  \begin{picture}(1,0.51235295)%
    \put(0,0){\includegraphics[width=\unitlength]{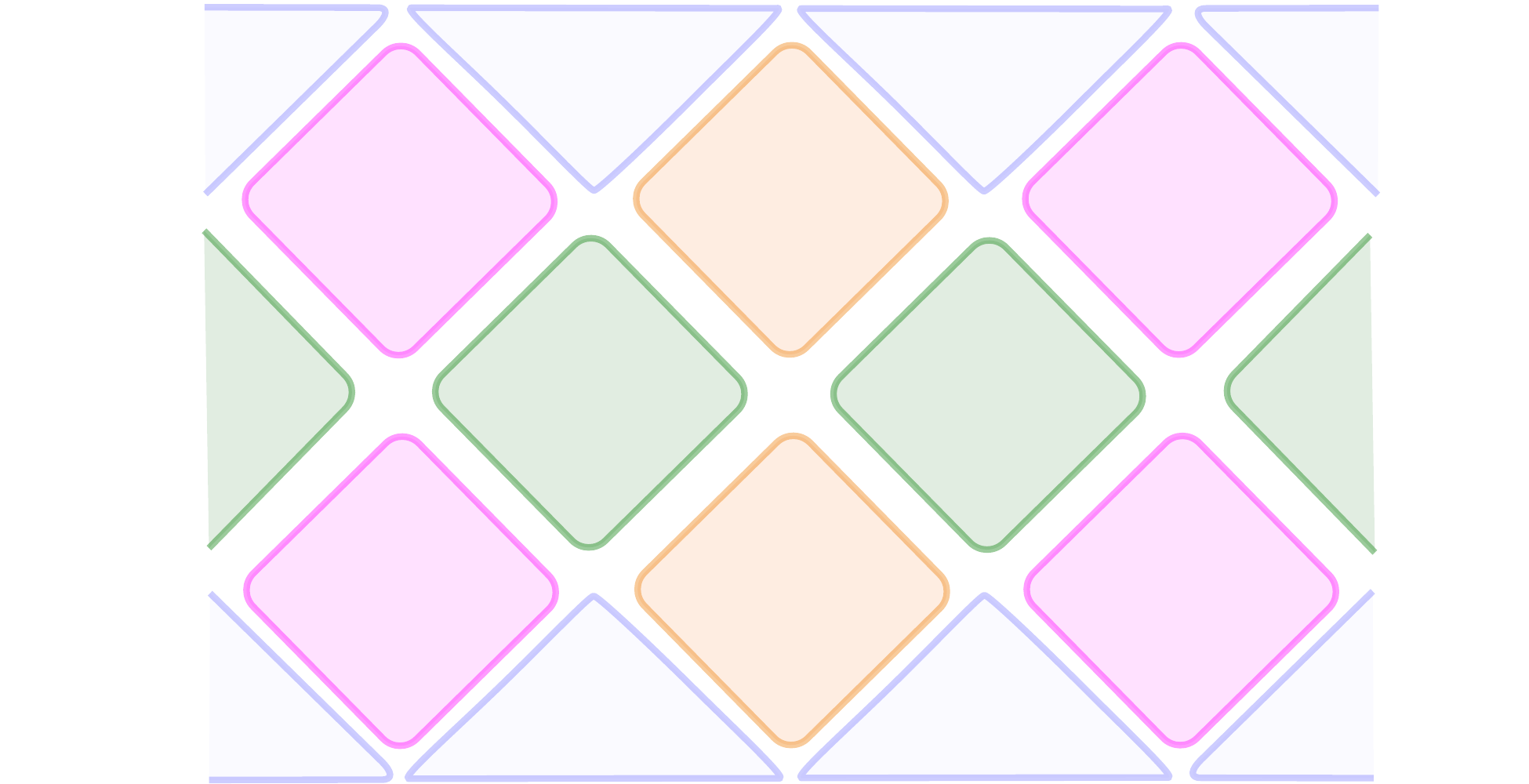}}%
    \put(0.00000507,0.51499728){\color[rgb]{0,0,0}\makebox(0,0)[lt]{\begin{minipage}{0.61508012\unitlength}\raggedright $\xymatrix@!C=0pt@!R=0pt{&& && && \ar@{..}[dddrrr]&& && && \ar@{..}[dddrrr]&& && && \ar@{..}[dddrrr]&&&&&&\\&&&&&&&&&&&&&&&&&&&&&&&&&&&&&\\&&  &&    &&    &&  &&  &&  &&  &&  &&  &&  &&&\\&&&P_d[1] \ar@{..}[uuurrr]\ar[dr] &&  &&  && P_e[1]\ar@{..}[uuurrr] \ar[dr] && && && P_b[1]\ar@{..}[uuurrr]\ar[dr] && && && P_c[1]\\&&&&\ar@{..}[dr]&&&&\ar[ur]&&  \ar@{..}[dr]&&&&\ar[ur] &&\ar@{..}[dr]&&&&\ar[ur]&&\\&&&&&\ar[dr]&&\ar@{..}[ur]&&  &&  \ar[dr]&&  \ar@{..}[ur]&&&& \ar[dr] &&\ar@{..}[ur] &&&&\\&& && && S_a\ar[ur]\ar[dr]&&  &&  &&  P_a[1]\ar[ur]\ar[dr] &&&& && S_a \ar[ur]\ar[dr]\\&&&&&\ar[ur]&&\ar@{..}[dr]&&  &&  \ar[ur]&&  \ar@{..}[dr] && && \ar[ur] &&\ar@{..}[dr] \\&&&&\ar@{..}[ur]&&  &&  \ar[dr]&& \ar@{..}[ur]&&&& \ar[dr]&& \ar@{..}[ur]&& && \ar[dr]\\&&&P_b[1]\ar@{..}[dddrrr]\ar[ur]&&  &&  && P_c[1]\ar@{..}[dddrrr] \ar[ur]&& && && P_d[1] \ar@{..}[dddrrr]\ar[ur]&& && &&P_e[1]\\&& &&  &&  &&  &&  &&   &&   &&  &&  &&  &&\\&&&  &&  && &&  &&   && &&  &&  &&  &&\\&& && && \ar@{..}[uuurrr]&&  && && \ar@{..}[uuurrr]  &&  &&  &&  \ar@{..}[uuurrr]&&&&&&}$\\ \end{minipage}}}%
    \put(-0.09258756,0.16705617){\color[rgb]{0,0,0}\makebox(0,0)[lb]{\smash{
}}}%
    \put(0.35459248,0.26771578){\color[rgb]{0,0,0}\makebox(0,0)[lt]{\begin{minipage}{0.12792571\unitlength}\raggedright $\mathcal{B}\cap\mathcal{D}$\end{minipage}}}%
    \put(0.61766378,0.26645273){\color[rgb]{0,0,0}\makebox(0,0)[lt]{\begin{minipage}{0.12792571\unitlength}\raggedright $\mathcal{C}\cap\mathcal{E}$\end{minipage}}}%
    \put(0.4892344,0.14054161){\color[rgb]{0,0,0}\makebox(0,0)[lt]{\begin{minipage}{0.12792571\unitlength}\raggedright $\mathcal{C}\cap\mathcal{D}$\end{minipage}}}%
    \put(0.49175265,0.389342){\color[rgb]{0,0,0}\makebox(0,0)[lt]{\begin{minipage}{0.12792571\unitlength}\raggedright $\mathcal{B}\cap\mathcal{E}$\end{minipage}}}%
    \put(0.74407853,0.39740031){\color[rgb]{0,0,0}\makebox(0,0)[lt]{\begin{minipage}{0.12792571\unitlength}\raggedright $\mathcal{B}\cap\mathcal{C}$\end{minipage}}}%
    \put(0.74760404,0.13852703){\color[rgb]{0,0,0}\makebox(0,0)[lt]{\begin{minipage}{0.12792571\unitlength}\raggedright $\mathcal{D}\cap\mathcal{E}$\end{minipage}}}%
    \put(0.23942673,0.13651245){\color[rgb]{0,0,0}\makebox(0,0)[lt]{\begin{minipage}{0.12792571\unitlength}\raggedright $\mathcal{B}\cap\mathcal{C}$\end{minipage}}}%
    \put(0.23489393,0.38732741){\color[rgb]{0,0,0}\makebox(0,0)[lt]{\begin{minipage}{0.12792571\unitlength}\raggedright $\mathcal{D}\cap\mathcal{E}$\end{minipage}}}%
    \put(0.83020174,0.26695638){\color[rgb]{0,0,0}\makebox(0,0)[lt]{\begin{minipage}{0.12792571\unitlength}\raggedright $\mathcal{B}\cap\mathcal{D}$\end{minipage}}}%
    \put(0.14323064,0.26544545){\color[rgb]{0,0,0}\makebox(0,0)[lt]{\begin{minipage}{0.12792571\unitlength}\raggedright $\mathcal{C}\cap\mathcal{E}$\end{minipage}}}%
    \put(0.4442412,0.32664216){\color[rgb]{0,0,0}\makebox(0,0)[lt]{\begin{minipage}{0.17829014\unitlength}\raggedright $\mathfrak{b}$\end{minipage}}}%
    \put(0.58013239,0.32975184){\color[rgb]{0,0,0}\makebox(0,0)[lt]{\begin{minipage}{0.17829014\unitlength}\raggedright $\mathfrak{e}$\end{minipage}}}%
    \put(0.44213378,0.19552361){\color[rgb]{0,0,0}\makebox(0,0)[lt]{\begin{minipage}{0.17829014\unitlength}\raggedright $\mathfrak{d}$\end{minipage}}}%
    \put(0.56703763,0.20458922){\color[rgb]{0,0,0}\makebox(0,0)[lt]{\begin{minipage}{0.17829014\unitlength}\raggedright $\mathfrak{c}$\end{minipage}}}%
    \put(0.83094734,0.33050034){\color[rgb]{0,0,0}\makebox(0,0)[lt]{\begin{minipage}{0.17829014\unitlength}\raggedright $\mathfrak{b}_a$\end{minipage}}}%
    \put(0.30917164,0.20761108){\color[rgb]{0,0,0}\makebox(0,0)[lt]{\begin{minipage}{0.17829014\unitlength}\raggedright $\mathfrak{b}_a$\end{minipage}}}%
    \put(0.31924453,0.33150763){\color[rgb]{0,0,0}\makebox(0,0)[lt]{\begin{minipage}{0.17829014\unitlength}\raggedright $\mathfrak{d}_a$\end{minipage}}}%
    \put(0.82526584,0.20883479){\color[rgb]{0,0,0}\makebox(0,0)[lt]{\begin{minipage}{0.17829014\unitlength}\raggedright $\mathfrak{d}_a$\end{minipage}}}%
    \put(0.19067378,0.19976919){\color[rgb]{0,0,0}\makebox(0,0)[lt]{\begin{minipage}{0.17829014\unitlength}\raggedright $\mathfrak{c}^a$\end{minipage}}}%
    \put(0.7018593,0.32668761){\color[rgb]{0,0,0}\makebox(0,0)[lt]{\begin{minipage}{0.17829014\unitlength}\raggedright $\mathfrak{c}^a$\end{minipage}}}%
    \put(0.18612733,0.33071676){\color[rgb]{0,0,0}\makebox(0,0)[lt]{\begin{minipage}{0.17829014\unitlength}\raggedright $\mathfrak{e}^a$\end{minipage}}}%
    \put(0.70222389,0.20279106){\color[rgb]{0,0,0}\makebox(0,0)[lt]{\begin{minipage}{0.17829014\unitlength}\raggedright $\mathfrak{e}^a$\end{minipage}}}%
  \end{picture}%
\endgroup%
}

  \caption{Regions in the AR quiver determined by $P_a[1]$.}
   \label{fig:regions}
\end{figure}

\begin{remark}
Let $M$ be an indecomposable $B$-module in $\mathcal{X}\cap \mathcal{Y}$ such that there exists a (unique) arrow $\alpha:\,x \to y$ in the quiver.  It follows that the right action of the element $\alpha\in B$ on $M$ is nonzero, that is $M\alpha \not=0$. 
\end{remark}

By the remark above we have the following equalities.  Note that none of the modules below are supported at $a$,  
because the same remark would imply that such modules are supported on the entire 3-cycle in $Q_{\mathcal T}$ containing $a$.  
However, this is impossible as the composition of any two arrows in a 3-cycles is zero in $B$.  
We have 

$$\mathcal{B}\cap \mathcal{E} = \{ M \in \text{ind}\,B \mid M \text{ is supported on } e\to b \}$$

$$\mathcal{C}\cap \mathcal{D} = \{ M \in \text{ind}\,B \mid M \text{ is supported on } c\to d \}$$

Moreover, since the support of an indecomposable $B$-module forms a connected subquiver of $Q$, we also have the following equalities.  

$$\mathcal{B}\cap \mathcal{C} = \{ M \in \text{ind}\,B \mid M \text{ is supported on } b\to a \to c \}$$

$$\mathcal{D}\cap \mathcal{E} = \{ M \in \text{ind}\,B \mid M \text{ is supported on } d\to a \to e \}$$

$$\mathcal{B}\cap \mathcal{D} = \{ M \in \text{ind}\,B \mid M \text{ is supported on } b\to a  \leftarrow d \}$$

$$\mathcal{C}\cap \mathcal{E} = \{ M \in \text{ind}\,B \mid M \text{ is supported on } c \leftarrow a  \to e \}$$

Finally, using similar reasoning it is easy to see that the sets described above are disjoint.  Next we describe modules lying on sectional paths defined in section \ref{ssec:diagonal-rays}. 
First, consider sectional paths starting or ending in $P_a[1]$, then we claim that  
 \[
 \mathfrak{i} = \{ M \in \text{ind}\,B \mid i \in \text{supp} (M) \subset Q_i \}\cup \{P_a[1]\}
 \]
for all $i \in \{ b, c, d, e\}$, for $Q_i$ the subquiver of $Q$ containing $i$, as in Figure~\ref{fig:quiver-divided}. 
%
We show that the claim holds for $i=b$, but similar arguments can be used to justify the remaining cases.  Note, that it suffices to show that a module $M \in \mathfrak{b}$ is supported on $b$ but it is not supported on $e$ or $a$.   By construction the sectional path $\mathfrak{b}$ starts at $P_e[1]$, so $0=\text{Hom}  (\tau^{-1} P_e[1], M)=\text{Hom} (P_e, M)$.  On the other hand, $\mathfrak{b}$ ends at $P_a[1]$, so $0=\text{Hom} (M, \tau P_a[1])=\text{Hom}(M, I_a)$, where $I_a$ is the injective $B$-module at $a$.  This shows that $M$ is not supported at $e$ or $a$.   
Finally, we can see from Figure~\ref{fig:regions} that $M$ has a nonzero morphism into 
$\tau P_b[1] = I_b$, provided that $b$ is not a boundary segment.  However, if $b$ is a 
boundary segment, then 
$\mathfrak{b} \cap \mathrm{Ob}(\text{mod}\,B) = \emptyset$ 
and we have $\mathfrak{b} = \{P_a[1]\}$. 
Conversely, it also follows from Figure~\ref{fig:regions} that every module $M$ supported on $b$ and some other vertices of $Q_b$ lies on $\mathfrak{b}$.  This shows the claim.   

Now consider sectional paths starting or ending in $S_a$.  Using similar arguments as above we see that

$$\mathfrak{i}^a = \{ M \in \text{ind}\,B \mid a    \in \text{supp} (M) \subset Q_i^a \}$$
for  $i \in \{c, e\}$ and 

$$\mathfrak{i}_a = \{ M \in \text{ind}\,B \mid a \in \text{supp} (M) \subset Q_i^a \} $$ 
for $i \in \{b, d\}$,  where $Q^a_i$ is the full subquiver of $Q$ on vertices of $Q_i$ and the vertex $a$.  

Finally, we define $\mathcal{F}$ to be the set of indecomposable objects of $\mathcal{C}_f$ that do not belong to 
\[
\mathcal{A}\cup \mathcal{B}\cup \mathcal{C}\cup \mathcal{D}\cup \mathcal{E} \cup \{P_a[1]\}.
\]
The region $\mathcal{F}$ is a succession of wings 
in the AR quiver of $\mathcal C_f$, with 
peaks at the $P_x[1]$ for $x\in \{b,c,d,e\}$.  That is, in the AR quiver of $\mathcal C_f$ consider two neighboured copies 
of $P_a[1]$ with the four vertices $P_b[1]$, $P_c[1]$, $P_d[1]$, $P_e[1]$. Then the indecomposables of $\mathcal{F}$ 
are the vertices in the triangular regions below these four vertices, including them (as their peaks). 
By the glide symmetry, we also have these regions at the top of the frieze. In Figure~\ref{fig:regions}, 
the wings 
are the shaded unlabelled regions at the boundary. 
It corresponds to the diagonals inside and bounding the shaded regions in Figure~\ref{fig:triangulations-1-2}.
We will see in the next section that objects in $\mathcal{F}$ do not change under mutation 
of ${T}_f$ at $P_a[1]$.  

\begin{ex}\label{ex:quiver-triangul-flip}
We consider the triangulation $\mathcal{T}$ of a 13-gon, see left hand of Figure~\ref{fig:triangulations-1-2} 
and the triangulation $\mathcal{T}'=\mu_1(\mathcal{T})$ obtained by flipping diagonal $1$.  

The quivers of $\mathcal{T}$ and of $\mathcal{T}'$ are given below. Note that the quiver $Q$ is the same as in Example~\ref{E:category_frieze}.

$$
\xymatrix@!C=0pt@!R=0pt{
&&&&&&7\ar[dl] &&&&&&& &&7\ar[dl]\\
Q: &8&&3\ar[ll]\ar[dr]&&2\ar[ll]\ar[rr] &&6\ar[lu]   &&Q':&8&&3\ar[ll]\ar[dd] &&2 \ar[dl] \ar[rr] &&6\ar[lu]  \\
&&10\ar[ld]&&1\ar[ur]\ar[dl]&&  &&&&&10\ar[ld]&&1\ar[rd] \ar[ul]&&\\
&9\ar[rr]&&5\ar[rr]\ar[lu]&&4\ar[ul] &&    &&&9\ar[rr]&&5\ar[lu]\ar[ur]&&4\ar[uu] && }
$$

Figure~\ref{fig:ARquiver} shows the Auslander-Reiten quiver of the cluster category 
$\mathcal C_f$ for $Q$. 

In Figure~\ref{fig:frieze-entries} (Section~\ref{sec:mutating}), the frieze patterns of $T$ and of $T'$ are given. 

\begin{figure}
\[
\includegraphics[width=6cm]{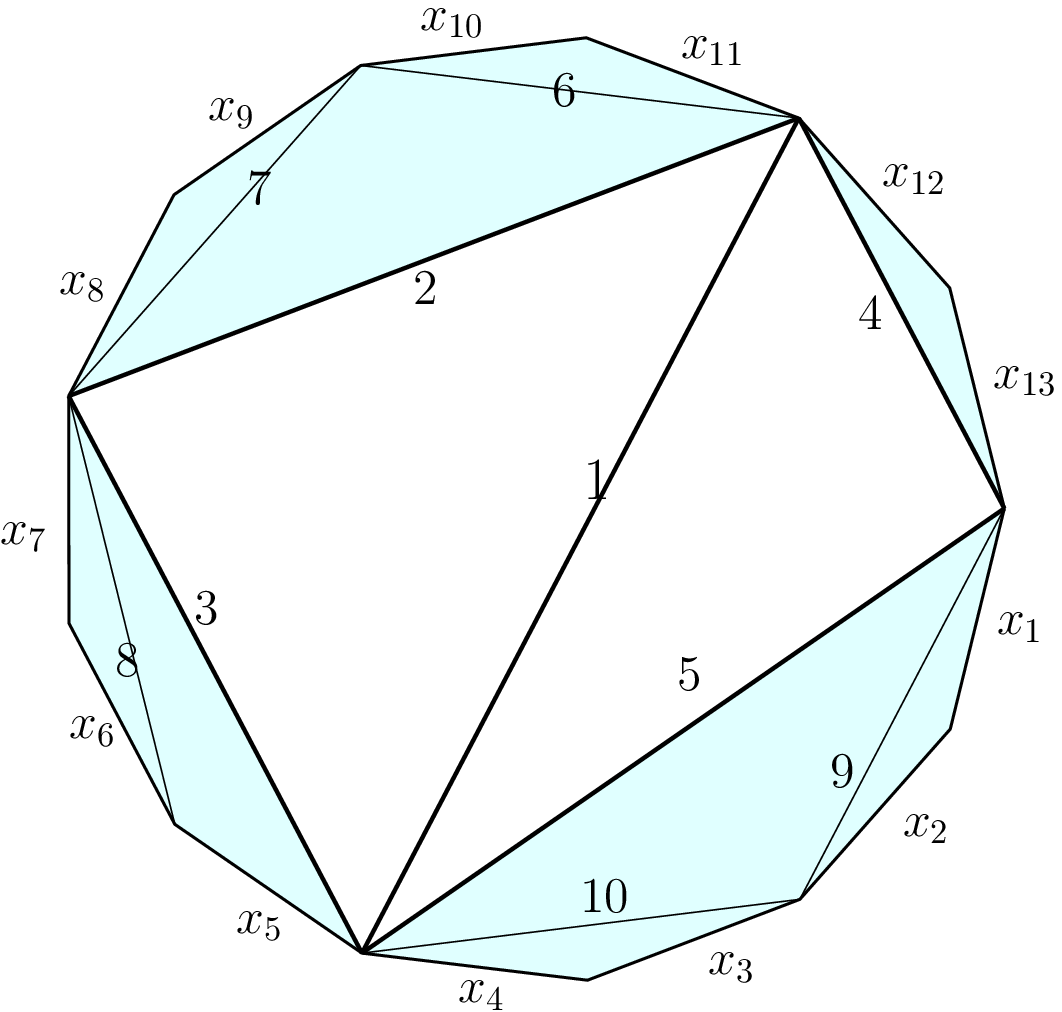}
\hskip 1cm
\includegraphics[width=6cm]{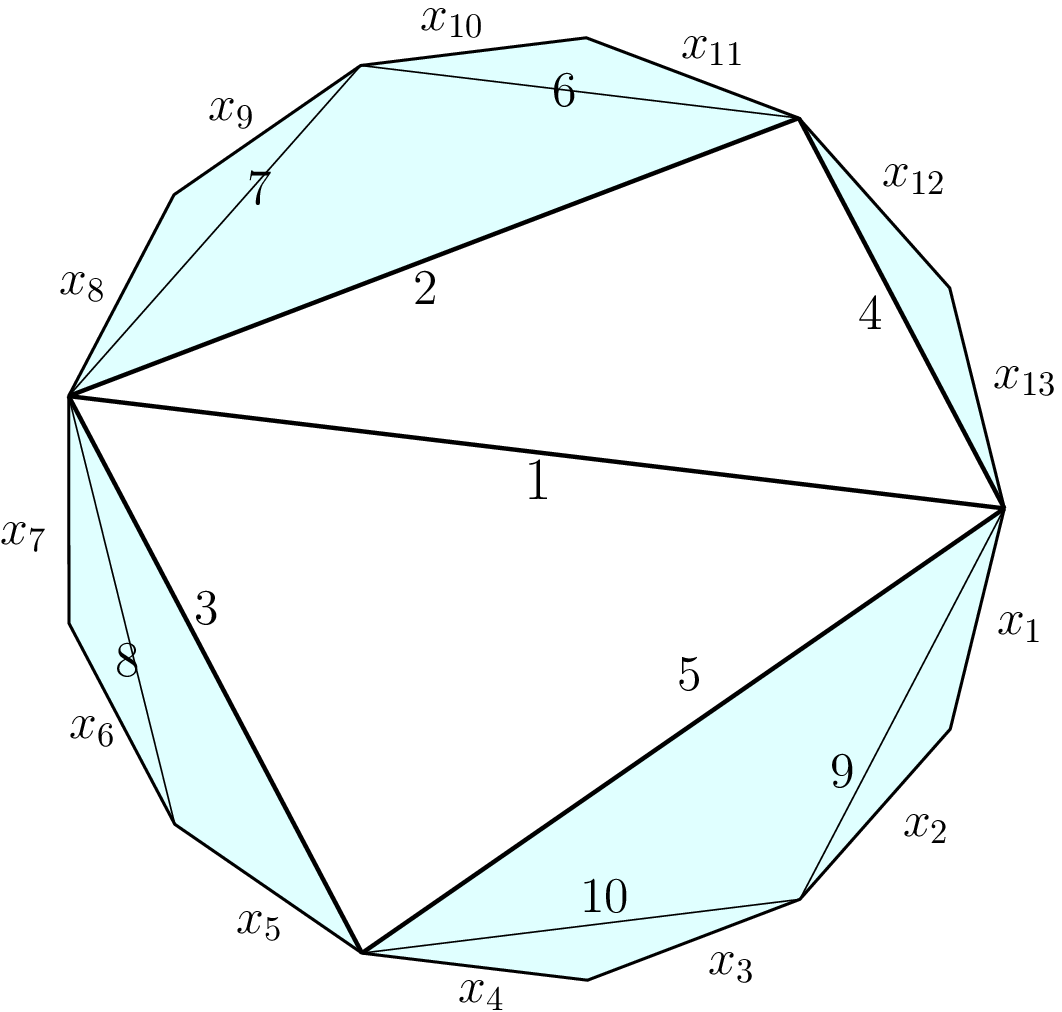}
\]
\caption{Triangulations $\mathcal{T}$ and $\mathcal{T}'=\mu_1(\mathcal{T})$}
\label{fig:triangulations-1-2}
\end{figure}

\begin{figure}
\hspace*{-.5cm}\scalebox{.9}{\begingroup%
  \makeatletter%
  \providecommand\color[2][]{%
    \errmessage{(Inkscape) Color is used for the text in Inkscape, but the package 'color.sty' is not loaded}%
    \renewcommand\color[2][]{}%
  }%
  \providecommand\transparent[1]{%
    \errmessage{(Inkscape) Transparency is used (non-zero) for the text in Inkscape, but the package 'transparent.sty' is not loaded}%
    \renewcommand\transparent[1]{}%
  }%
  \providecommand\rotatebox[2]{#2}%
  \ifx\svgwidth\undefined%
    \setlength{\unitlength}{498.54893649bp}%
    \ifx\svgscale\undefined%
      \relax%
    \else%
      \setlength{\unitlength}{\unitlength * \real{\svgscale}}%
    \fi%
  \else%
    \setlength{\unitlength}{\svgwidth}%
  \fi%
  \global\let\svgwidth\undefined%
  \global\let\svgscale\undefined%
  \makeatother%
  \begin{picture}(1,0.68930642)%
    \put(0,0){\includegraphics[width=\unitlength]{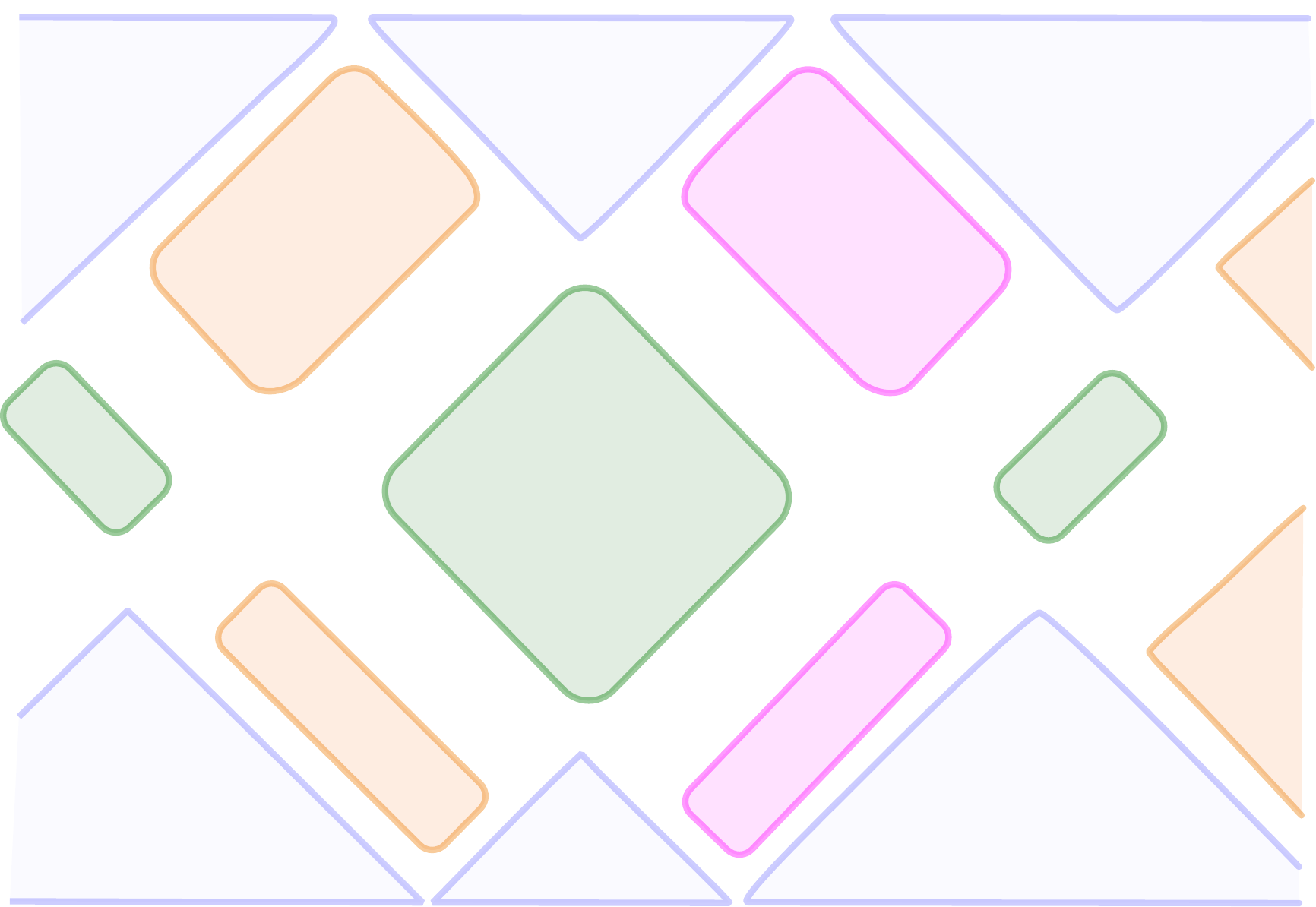}}%
    \put(0.00188615,0.69083277){\color[rgb]{0,0,0}\makebox(0,0)[lt]{\begin{minipage}{0.36649641\unitlength}\raggedright $$\xymatrix@!C=5pt@!R=5pt{&{\begin{smallmatrix}P_{x_9}[1] \end{smallmatrix}}\ar[dr]&&{\begin{smallmatrix}P_{x_{8}}[1]\end{smallmatrix}}\ar[dr]&&{\begin{smallmatrix}P_{x_{7}}[1]\end{smallmatrix}}\ar[dr]&&{\begin{smallmatrix}P_{x_{6}}[1]\end{smallmatrix}}\ar[dr]&&{\begin{smallmatrix}P_{x_{5}}[1]\end{smallmatrix}}\ar[dr]&&{\begin{smallmatrix}P_{x_{4}}[1]\end{smallmatrix}}\ar[dr]&&{\begin{smallmatrix}P_{x_{3}}[1]\end{smallmatrix}}\ar[dr]&&{\begin{smallmatrix}P_{x_{2}}[1]\end{smallmatrix}}\ar[dr]\\{\begin{smallmatrix}6\\7\end{smallmatrix}}\ar[dr]\ar[ur]&&{\begin{smallmatrix}P_7[1]\end{smallmatrix}}\ar[dr]\ar[ur]&&{\begin{smallmatrix}7\\2\\3\\8\end{smallmatrix}}\ar[dr]\ar[ur]&&{\begin{smallmatrix}P_8[1]\end{smallmatrix}}\ar[dr]\ar[ur]&&{\begin{smallmatrix}8\end{smallmatrix}}\ar[dr]\ar[ur]&&{\begin{smallmatrix}3\\1\\5\\10\end{smallmatrix}}\ar[dr]\ar[ur]&&{\begin{smallmatrix}P_{10}[1]\end{smallmatrix}}\ar[dr]\ar[ur]&&{\begin{smallmatrix}10\\9\end{smallmatrix}}\ar[dr]\ar[ur]&&{\begin{smallmatrix}P_9[1]\end{smallmatrix}}\\&{\begin{smallmatrix}6\end{smallmatrix}}\ar[ur]\ar[dr]&&{\begin{smallmatrix}2\\3\\8\end{smallmatrix}}\ar[ur]\ar[dr]&&{\begin{smallmatrix}7\\2\\3\end{smallmatrix}}\ar[ur]\ar[dr]&&{\begin{smallmatrix}P_3[1]\end{smallmatrix}}\ar[ur]\ar[dr]&&{\begin{smallmatrix}3\\8\;1\\\;\;\;5\\\;\;\;10\end{smallmatrix}}\ar[ur]\ar[dr]&&{\begin{smallmatrix}3\\1\\5\end{smallmatrix}}\ar[ur]\ar[dr]&&{\begin{smallmatrix}9\end{smallmatrix}}\ar[ur]\ar[dr]&&{\begin{smallmatrix}10\end{smallmatrix}}\ar[ur]\ar[dr]\\{\begin{smallmatrix}P_2[1]\end{smallmatrix}}\ar[ur]\ar[dr]&&{\begin{smallmatrix}2\\3\;6\\8\;\;\;\end{smallmatrix}}\ar[ur]\ar[dr]&&{\begin{smallmatrix}2\\3\end{smallmatrix}}\ar[ur]\ar[dr]&&{\begin{smallmatrix}7\\2\end{smallmatrix}}\ar[ur]\ar[dr]&&{\begin{smallmatrix}1\\5\\10\end{smallmatrix}}\ar[ur]\ar[dr]&&{\begin{smallmatrix}3\\8\;1\\\;\;\;5\end{smallmatrix}}\ar[ur]\ar[dr]&&{\begin{smallmatrix}3\;\;\;\\1\;9\\5\end{smallmatrix}}\ar[ur]\ar[dr]&&{\begin{smallmatrix}P_5[1]\end{smallmatrix}}\ar[ur]\ar[dr]&&{\begin{smallmatrix}5\\4\;10\end{smallmatrix}}\\&{\begin{smallmatrix}3\\8\end{smallmatrix}}\ar[ur]\ar[dr]&&{\begin{smallmatrix}2\\3\;6\end{smallmatrix}}\ar[ur]\ar[dr]&&{\begin{smallmatrix}2\end{smallmatrix}}\ar[ur]\ar[dr]&&{\begin{smallmatrix}\;\;1\;7\\5\;2\\10\;\;\;\;\;\end{smallmatrix}}\ar[ur]\ar[dr]&&{\begin{smallmatrix}1\\5\end{smallmatrix}}\ar[ur]\ar[dr]&&{\begin{smallmatrix}3\;\;\\8\;1\;9\\\;\;\;5\end{smallmatrix}}\ar[ur]\ar[dr]&&{\begin{smallmatrix}3\\1\end{smallmatrix}}\ar[ur]\ar[dr]&&{\begin{smallmatrix}4\end{smallmatrix}}\ar[ur]\ar[dr]\\{\begin{smallmatrix}\;\;\;8\\4\;3\\1\end{smallmatrix}}\ar[ur]\ar[dr]&&{\begin{smallmatrix}3\end{smallmatrix}}\ar[ur]\ar[dr]&&{\begin{smallmatrix}2\\6\end{smallmatrix}}\ar[ur]\ar[dr]&&{\begin{smallmatrix}1\\5\;2\\10\;\;\;\end{smallmatrix}}\ar[ur]\ar[dr]&&{\begin{smallmatrix}\;\;1\;7\\5\;2\end{smallmatrix}}\ar[ur]\ar[dr]&&{\begin{smallmatrix}9\;1\\5\end{smallmatrix}}\ar[ur]\ar[dr]&&{\begin{smallmatrix}3\\8\;1\end{smallmatrix}}\ar[ur]\ar[dr]&&{\begin{smallmatrix}4\;3\\1\end{smallmatrix}}\ar[ur]\ar[dr]&&{\begin{smallmatrix}P_1[1]\end{smallmatrix}}\\&{\begin{smallmatrix}4\;3\\1\end{smallmatrix}}\ar[ur]\ar[dr]&&{\begin{smallmatrix}P_1[1]\end{smallmatrix}}\ar[ur]\ar[dr]&&{\begin{smallmatrix}1\\5\;2\\10\;\;6\end{smallmatrix}}\ar[ur]\ar[dr]&&{\begin{smallmatrix}1\\5\;2\end{smallmatrix}}\ar[ur]\ar[dr]&&{\begin{smallmatrix}9\;1\;7\\\;5\;2\end{smallmatrix}}\ar[ur]\ar[dr]&&{\begin{smallmatrix}1\end{smallmatrix}}\ar[ur]\ar[dr]&&{\begin{smallmatrix}\;\;3\;4\\8\;1\end{smallmatrix}}\ar[ur]\ar[dr]&&{\begin{smallmatrix}3\end{smallmatrix}}\ar[ur]\ar[dr]\\{\begin{smallmatrix}3\\1\end{smallmatrix}}\ar[ur]\ar[dr]&&{\begin{smallmatrix}4\end{smallmatrix}}\ar[ur]\ar[dr]&&{\begin{smallmatrix}5\\10\end{smallmatrix}}\ar[ur]\ar[dr]&&{\begin{smallmatrix}1\\5\;2\\\;\;\;6\end{smallmatrix}}\ar[ur]\ar[dr]&&{\begin{smallmatrix}9\;1\\\;\;5\;2\end{smallmatrix}}\ar[ur]\ar[dr]&&{\begin{smallmatrix}7\;1\\2\end{smallmatrix}}\ar[ur]\ar[dr]&&{\begin{smallmatrix}4\\1\end{smallmatrix}}\ar[ur]\ar[dr]&&{\begin{smallmatrix}3\\8\end{smallmatrix}}\ar[ur]\ar[dr]&&{\begin{smallmatrix}2\\3\;6\end{smallmatrix}}\\&{\begin{smallmatrix}P_5[1]\end{smallmatrix}}\ar[ur]\ar[dr]&&{\begin{smallmatrix}5\\4\;10\end{smallmatrix}}\ar[ur]\ar[dr]&&{\begin{smallmatrix}5\end{smallmatrix}}\ar[ur]\ar[dr]&&{\begin{smallmatrix}9\;1\\\;\;\;5\;2\\\;\;\;\;\;\;6\end{smallmatrix}}\ar[ur]\ar[dr]&&{\begin{smallmatrix}1\\2\end{smallmatrix}}\ar[ur]\ar[dr]&&{\begin{smallmatrix}\;\;\;4\\7\;1\\2\end{smallmatrix}}\ar[ur]\ar[dr]&&{\begin{smallmatrix}P_2[1]\end{smallmatrix}}\ar[ur]\ar[dr]&&{\begin{smallmatrix}2\\3\;6\\8\;\;\;\end{smallmatrix}}\ar[ur]\ar[dr]\\{\begin{smallmatrix}9\end{smallmatrix}}\ar[ur]\ar[dr]&&{\begin{smallmatrix}10\end{smallmatrix}}\ar[ur]\ar[dr]&&{\begin{smallmatrix}5\\4\end{smallmatrix}}\ar[ur]\ar[dr]&&{\begin{smallmatrix}9\\5\end{smallmatrix}}\ar[ur]\ar[dr]&&{\begin{smallmatrix}1\\2\\6\end{smallmatrix}}\ar[ur]\ar[dr]&&{\begin{smallmatrix}4\\1\\2\end{smallmatrix}}\ar[ur]\ar[dr]&&{\begin{smallmatrix}7\end{smallmatrix}}\ar[ur]\ar[dr]&&{\begin{smallmatrix}6\end{smallmatrix}}\ar[ur]\ar[dr]&&{\begin{smallmatrix}2\\3\\8\end{smallmatrix}}\\&{\begin{smallmatrix}10\\9\end{smallmatrix}}\ar[ur]\ar[dr]&&{\begin{smallmatrix}P_9[1]\end{smallmatrix}}\ar[ur]\ar[dr]&&{\begin{smallmatrix}9\\5\\4\end{smallmatrix}}\ar[ur]\ar[dr]&&{\begin{smallmatrix}P_4[1]\end{smallmatrix}}\ar[ur]\ar[dr]&&{\begin{smallmatrix}4\\1\\2\\6\end{smallmatrix}}\ar[ur]\ar[dr]&&{\begin{smallmatrix}P_6[1]\end{smallmatrix}}\ar[ur]\ar[dr]&&{\begin{smallmatrix}6\\7\end{smallmatrix}}\ar[ur]\ar[dr]&&{\begin{smallmatrix}P_7[1]\end{smallmatrix}}\ar[ur]\ar[dr]\\{\begin{smallmatrix}P_{x_{3}}[1]\end{smallmatrix}}\ar[ur]&&{\begin{smallmatrix}P_{x_{2}}[1]\end{smallmatrix}}\ar[ur]&&{\begin{smallmatrix}P_{x_{1}}[1]\end{smallmatrix}}\ar[ur]&&{\begin{smallmatrix}P_{x_{13}}[1]\end{smallmatrix}}\ar[ur]&&{\begin{smallmatrix}P_{x_{12}}[1]\end{smallmatrix}}\ar[ur]&&{\begin{smallmatrix}P_{x_{11}}[1]\end{smallmatrix}}\ar[ur]&&{\begin{smallmatrix}P_{x_{10}}[1]\end{smallmatrix}}\ar[ur]&&{\begin{smallmatrix}P_{x_{9}}[1]\end{smallmatrix}}\ar[ur]&&{\begin{smallmatrix}P_{x_{8}}[1]\end{smallmatrix}}}$$\end{minipage}}}%
  \end{picture}%
\endgroup%
}

  \caption{AR quiver of the category $\mathcal C_f$ arising from $Q$}
   \label{fig:ARquiver}
\end{figure}

\end{ex}

\begin{figure}
\hspace*{-.5cm}\scalebox{.9}{\begingroup%
  \makeatletter%
  \providecommand\color[2][]{%
    \errmessage{(Inkscape) Color is used for the text in Inkscape, but the package 'color.sty' is not loaded}%
    \renewcommand\color[2][]{}%
  }%
  \providecommand\transparent[1]{%
    \errmessage{(Inkscape) Transparency is used (non-zero) for the text in Inkscape, but the package 'transparent.sty' is not loaded}%
    \renewcommand\transparent[1]{}%
  }%
  \providecommand\rotatebox[2]{#2}%
  \ifx\svgwidth\undefined%
    \setlength{\unitlength}{498.54893649bp}%
    \ifx\svgscale\undefined%
      \relax%
    \else%
      \setlength{\unitlength}{\unitlength * \real{\svgscale}}%
    \fi%
  \else%
    \setlength{\unitlength}{\svgwidth}%
  \fi%
  \global\let\svgwidth\undefined%
  \global\let\svgscale\undefined%
  \makeatother%
  \begin{picture}(1,0.67947309)%
    \put(0,0){\includegraphics[width=\unitlength]{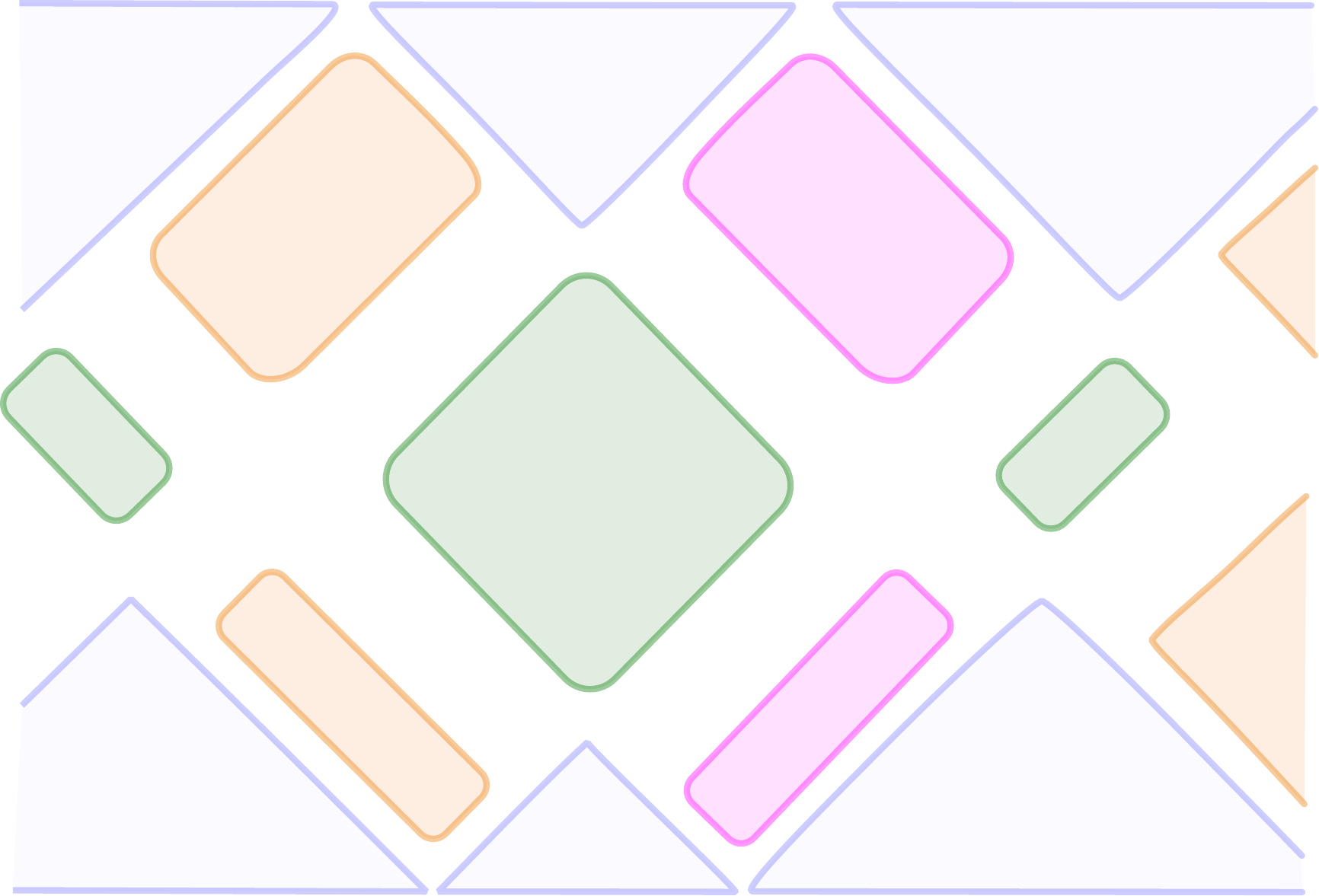}}%
    \put(0.02515416,0.67237921){\color[rgb]{0,0,0}\makebox(0,0)[lt]{\begin{minipage}{0.36649641\unitlength}\raggedright {\xymatrix@!C=5pt@!R=5pt{& {\begin{smallmatrix}1\end{smallmatrix}} && {\begin{smallmatrix}1\end{smallmatrix}} && {\begin{smallmatrix}1\end{smallmatrix}} && {\begin{smallmatrix}1\end{smallmatrix}} && {\begin{smallmatrix}1\end{smallmatrix}} && {\begin{smallmatrix}1\end{smallmatrix}} && {\begin{smallmatrix}1\end{smallmatrix}} && {\begin{smallmatrix}1\end{smallmatrix}}  \\{\begin{smallmatrix}3\end{smallmatrix}} && {\begin{smallmatrix}1\end{smallmatrix}} && {\begin{smallmatrix}5 \\ \textcolor{red}{6}\end{smallmatrix}} && {\begin{smallmatrix}1\end{smallmatrix}} && {\begin{smallmatrix}2\end{smallmatrix}}  &&{\begin{smallmatrix}5 \\ \textcolor{red}{4} \end{smallmatrix}} && {\begin{smallmatrix}1\end{smallmatrix}} && {\begin{smallmatrix}3\end{smallmatrix}} && {\begin{smallmatrix}1\end{smallmatrix}} \\ & {\begin{smallmatrix}2 \end{smallmatrix}} && {\begin{smallmatrix}4\\ \textcolor{red}{5}\end{smallmatrix}} && {\begin{smallmatrix} 4 \\ \textcolor{red}{5} \end{smallmatrix}} && {\begin{smallmatrix}1\end{smallmatrix}} && {\begin{smallmatrix}  9 \\\textcolor{red}{7} \end{smallmatrix}} && {\begin{smallmatrix}4\\ \textcolor{red}{3}\end{smallmatrix}} && {\begin{smallmatrix}2 \end{smallmatrix}} && {\begin{smallmatrix}2\end{smallmatrix}} \\ {\begin{smallmatrix}1\end{smallmatrix}} && {\begin{smallmatrix}7 \\ \textcolor{red}{9} \end{smallmatrix}} && {\begin{smallmatrix}3 \\ \textcolor{red}{4}\end{smallmatrix}} && {\begin{smallmatrix}3 \\ \textcolor{red}{4}\end{smallmatrix}} && {\begin{smallmatrix}4\\\textcolor{red}{3}\end{smallmatrix}} && {\begin{smallmatrix}7 \\ \textcolor{red}{5}\end{smallmatrix}} && {\begin{smallmatrix}7\\\textcolor{red}{5} \end{smallmatrix}} && {\begin{smallmatrix}1\end{smallmatrix}} && {\begin{smallmatrix}5 \\ \textcolor{red}{7}\end{smallmatrix}} \\& {\begin{smallmatrix}3 \\ \textcolor{red}{4}\end{smallmatrix}} &&  {\begin{smallmatrix}5 \\ \textcolor{red}{7}\end{smallmatrix}} && {\begin{smallmatrix}2 \\ \textcolor{red}{3}\end{smallmatrix}} && {\begin{smallmatrix}11 \\ \textcolor{red}{11}\end{smallmatrix}} && {\begin{smallmatrix}3 \\ \textcolor{red}{2}\end{smallmatrix}} && {\begin{smallmatrix}12\\ \textcolor{red}{8}\end{smallmatrix}} && {\begin{smallmatrix}3 \\ \textcolor{red}{2}\end{smallmatrix}} &&  {\begin{smallmatrix}2 \\ \textcolor{red}{3}\end{smallmatrix}}  \\{\begin{smallmatrix}8 \\ \textcolor{red}{7}\end{smallmatrix}} &&  {\begin{smallmatrix}2 \\ \textcolor{red}{3}\end{smallmatrix}} && {\begin{smallmatrix}3 \\ \textcolor{red}{5}\end{smallmatrix}} && {\begin{smallmatrix}7 \\ \textcolor{red}{8}\end{smallmatrix}} && {\begin{smallmatrix}8 \\ \textcolor{red}{7}\end{smallmatrix}} && {\begin{smallmatrix}5 \\ \textcolor{red}{3}\end{smallmatrix}} && {\begin{smallmatrix}5 \\ \textcolor{red}{3}\end{smallmatrix}} &&  {\begin{smallmatrix}5 \\ \textcolor{red}{5}\end{smallmatrix}} && {\begin{smallmatrix}1 \\ \textcolor{red}{\bf 2}\end{smallmatrix}}  \\&  {\begin{smallmatrix}5 \\ \textcolor{red}{5}\end{smallmatrix}} && {\begin{smallmatrix}1 \\ \textcolor{red}{\mathbf 2}\end{smallmatrix}} && {\begin{smallmatrix}10 \\ \textcolor{red}{13}\end{smallmatrix}} && {\begin{smallmatrix}5 \\ \textcolor{red}{5}\end{smallmatrix}} && {\begin{smallmatrix}13 \\ \textcolor{red}{10}\end{smallmatrix}} && {\begin{smallmatrix}2 \\ \textcolor{red}{\mathbf 1}\end{smallmatrix}} && {\begin{smallmatrix}8 \\ \textcolor{red}{7}\end{smallmatrix}} && {\begin{smallmatrix}2 \\ \textcolor{red}{3}\end{smallmatrix}}  \\{\begin{smallmatrix}3 \\ \textcolor{red}{2}\end{smallmatrix}} && {\begin{smallmatrix}2\\ \textcolor{red}{3}\end{smallmatrix}} && {\begin{smallmatrix}3 \\ \textcolor{red}{5}\end{smallmatrix}} && {\begin{smallmatrix}7 \\ \textcolor{red}{8}\end{smallmatrix}} && {\begin{smallmatrix}8 \\ \textcolor{red}{7}\end{smallmatrix}} && {\begin{smallmatrix}5 \\ \textcolor{red}{3}\end{smallmatrix}} && {\begin{smallmatrix}3 \\ \textcolor{red}{2}\end{smallmatrix}} &&  {\begin{smallmatrix}3 \\ \textcolor{red}{4}\end{smallmatrix}} && {\begin{smallmatrix}5 \\ \textcolor{red}{7}\end{smallmatrix}}  \\&  {\begin{smallmatrix}1\end{smallmatrix}} &&  {\begin{smallmatrix}5 \\ \textcolor{red}{7}\end{smallmatrix}} && {\begin{smallmatrix}2 \\ \textcolor{red}{3} \end{smallmatrix}} && {\begin{smallmatrix}11 \\ \textcolor{red}{11}\end{smallmatrix}} && {\begin{smallmatrix}3 \\ \textcolor{red}{2}\end{smallmatrix}} && {\begin{smallmatrix}7 \\ \textcolor{red}{5}\end{smallmatrix}} && {\begin{smallmatrix}1\end{smallmatrix}} &&  {\begin{smallmatrix}7 \\ \textcolor{red}{9}\end{smallmatrix}}  \\{\begin{smallmatrix}2\end{smallmatrix}} &&  {\begin{smallmatrix}  2\end{smallmatrix}} && {\begin{smallmatrix}3 \\ \textcolor{red}{4}\end{smallmatrix}} &&  {\begin{smallmatrix}3\\ \textcolor{red}{4} \end{smallmatrix}} && {\begin{smallmatrix}4 \\ \textcolor{red}{3}\end{smallmatrix}} && {\begin{smallmatrix}4 \\ \textcolor{red}{3}\end{smallmatrix}} && {\begin{smallmatrix}2\end{smallmatrix}} &&  {\begin{smallmatrix}2\end{smallmatrix}} && {\begin{smallmatrix}4\\ \textcolor{red}{5}\end{smallmatrix}}  \\&  {\begin{smallmatrix}3\end{smallmatrix}} &&  {\begin{smallmatrix} 1 \end{smallmatrix}} && {\begin{smallmatrix} 4\\\textcolor{red}{5} \end{smallmatrix}} &&  {\begin{smallmatrix}1\end{smallmatrix}} && {\begin{smallmatrix}5 \\ \textcolor{red}{4}\end{smallmatrix}} &&  {\begin{smallmatrix}1\end{smallmatrix}} &&     {\begin{smallmatrix}3\end{smallmatrix}} &&  {\begin{smallmatrix}1\end{smallmatrix}}  \\{\begin{smallmatrix}1\end{smallmatrix}} &&  {\begin{smallmatrix}1\end{smallmatrix}} &&     {\begin{smallmatrix}1\end{smallmatrix}} &&  {\begin{smallmatrix}1\end{smallmatrix}} &&     {\begin{smallmatrix}1\end{smallmatrix}} &&  {\begin{smallmatrix}1\end{smallmatrix}} &&     {\begin{smallmatrix}1\end{smallmatrix}} &&  {\begin{smallmatrix}1\end{smallmatrix}} &&    {\begin{smallmatrix}1\end{smallmatrix}} &&}}\end{minipage}}}%
  \end{picture}%
\endgroup%
}

  \caption{Frieze pattern of Example~\ref{ex:quiver-triangul-flip}. Red entries: after flip of diagonal 1}
   \label{fig:frieze-entries}
\end{figure}

\section{Mutating friezes}\label{sec:mutating}

The goal of this section is to describe the effect of the flip of a diagonal or equivalently the mutation at 
an indecomposable projective on the associated frieze. We give a formula for computing the effect 
of the mutation 
using the specialised Caldero Chapoton map. 
Let $\mathcal{T}$ be a triangulation of a polygon with associated quiver $Q$. 
The quiver $Q$ looks as in Figure~\ref{fig:quiver-divided}, where the subquivers $Q_b$, $Q_c$, 
$Q_d$, $Q_e$ may 
be empty. Let 
$T=\oplus_{x\in T} P_x$ and  
$B=\End_{\mathcal C} T$ be the associated cluster-tilted algebra. 

Take $a\in \mathcal{T}$ and let $\mathcal{T}'=\mu_a(\mathcal{T})$ be the triangulation obtained from flipping $a$, 
with quiver $Q'=\mu_a(Q)$ (Figure~\ref{fig:flipped-quiver}). 
\begin{figure}
 \includegraphics[width=45mm]{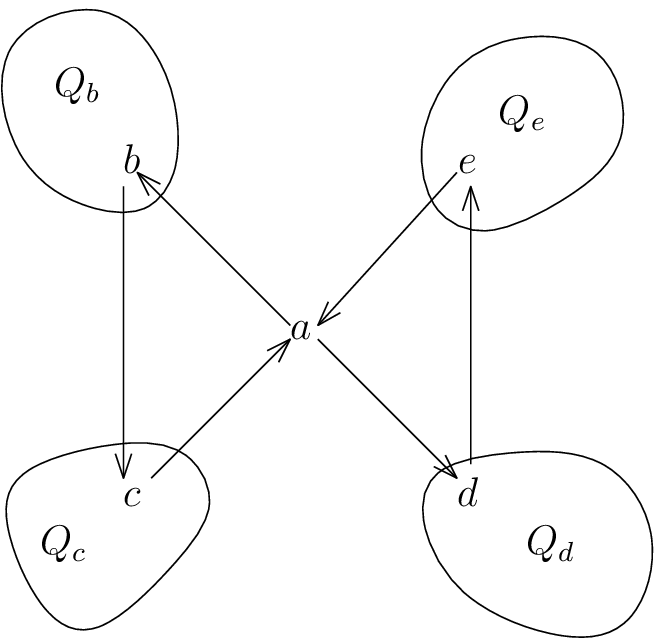}
 \caption{Quiver after flipping diagonal $a$}
 \label{fig:flipped-quiver}
\end{figure}

Let $B'$ be the algebra obtained through this, it is the cluster-tilted algebra for 
$T'=\oplus_{x\in \mathcal{T}'}P_x$. If $M$ is an indecomposable $B$-module, we write $M'$ for 
$\mu_{a} (M)$ in the sense of \cite{DWZ1}. If $M$ is an indecomposable $B$-module, the {\em entry of $M$} in the frieze $F({T})$ is the 
entry at the position of $M$ in the frieze.

\begin{defi}
Let $\mathcal{T}$ be a triangulation of a polygon, $a\in \mathcal{T}$ and $M$ an indecomposable 
object of $\mathcal{C}_f$. Then we define the {\em frieze difference (w.r.t. mutation at $a$)}  
$\delta_a: \ind \mathcal{C}_f \to \mathbb{Z}$ by 
 \[
 \delta_a(M)=\rho_{\mathcal{T}}(M)-\rho_{\mathcal{T}'}(M') \in \mathbb{Z} 
 \]
\end{defi}


In Section~\ref{ssec:mutating-regions} we first describe the effect mutation has on 
the regions in the frieze. This gives us all the necessary tools to compute the frieze difference 
$\delta_a$ (Section~\ref{ssec:computing-delta}). 

\subsection{Mutation of regions}\label{ssec:mutating-regions}
$\ $

Here we describe how mutation affects the regions (Section~\ref{ssec:regions}) of the frieze $F({T})$. 
Let $\mathcal{T},a,B$ and $\mathcal{T}',B'$ be as above. When mutating at $a$, the change in support of 
the indecomposable modules can be described explicitly in terms of the local quiver around 
$a$. This is what we will do here. We first describe the regions in the AR quiver of $\mathcal{C}_f$ 
for $B'$.

If $x$ is a diagonal or a boundary segment, we write 
\[
\mathcal X'=\{M \in \mbox{ind}\,B'\mid \Hom(P_x,M)\ne 0\}
\]
for the indecomposable modules supported on $x$.

After mutating $a$, the regions in the AR quiver are still determined by the projective indecomposables corresponding 
to the framing diagonals (or edges) $b,c,d,e$. The relative positions of $a,b,c,d$ and $e$ have changed, however it follows from \cite{DWZ1} that 
except for vertex $a$ the support of an indecomposable module at all other vertices remains the same.  
Therefore, the regions 
are now described as follows: 

$$\mathcal{B}'\cap \mathcal{E}' = \{ M \in \text{ind}\,B' \mid M \text{ is supported on } e\to a\to b \}$$

$$\mathcal{C}'\cap \mathcal{D}' = \{ M \in \text{ind}\,B' \mid M \text{ is supported on } c\to a \to d \}$$

$$\mathcal{B}'\cap \mathcal{C}' = \{ M \in \text{ind}\,B' \mid M \text{ is supported on } b\to   c \}$$

$$\mathcal{D}'\cap \mathcal{E}'= \{ M \in \text{ind}\,B' \mid M \text{ is supported on } d\to   e \}$$

$$\mathcal{B}'\cap \mathcal{D}' = \{ M \in \text{ind}\,B' \mid M \text{ is supported on } b\leftarrow  a  \to d \}$$

$$\mathcal{C}'\cap \mathcal{E}' = \{ M \in \text{ind}\,B' \mid M \text{ is supported on } c \to a  \leftarrow e \}$$

Under the mutation at $a$, if a module $M$ lies on one of the rays $\mathfrak{b}_a$, $\mathfrak{d}_a$ $\mathfrak{c}^a$ and $\mathfrak{e}^a$ 
then $M'$ is obtained from $M$ by removing support at vertex $a$.  The modules lying on the remaining four rays gain support at vertex $a$ after the mutation.

\subsection{The specialized Caldero Chapoton map under short exact sequences} 

$\ $

Let $B$ be a cluster-tilted algebra of type $A_n$.  We begin by showing three formulas for the number of submodules of a given $M\in\text{ind}\,B$ in terms of the number of submodules of certain quotients and submodules of $M$.  

\begin{lemma}\label{lem1}
Let $M \in\textup{ind}\,B$ such that $M = (\dots z \leftarrow x \to y \dots)$, and consider two indecomposable submodules $M_z, M_y$ of $M$ supported on $z,y$ respectively, such that there exists a short exact sequence 

$$0\to M_z\oplus M_y \to M \to S_x \to 0$$
ending in a simple module $S_x$.  Then 

$$s(M) = s(M_z) s(M_y) + s(\widetilde{M}_z) s(\widetilde{M}_y)$$
where $\widetilde{M}_z, \widetilde{M}_y$ are maximal quotients of $M_z, M_y$ respectively, that are not supported on $z, y$.  
\end{lemma}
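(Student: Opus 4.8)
The plan is to partition the submodules of $M$ according to whether or not they contain the peak $S_x$, and to count each class separately. The short exact sequence $0\to M_z\oplus M_y\to M\to S_x\to 0$ tells us that $M_z\oplus M_y$ is precisely the submodule of $M$ obtained by deleting the one-dimensional top sitting at the vertex $x$. Since $M$ is a string module, the only arrows acting nontrivially on $M$ are those appearing in its string, and the two arrows out of $x$ send a generator $e_x$ of the $x$-part onto nonzero elements $e_z\in M_z$ and $e_y\in M_y$.

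First I would treat the submodules $N\leq M$ with $N_x=0$. Any such $N$ is contained in $M_z\oplus M_y$, and because the supports of $M_z$ and $M_y$ are disjoint and joined only through $x$, the submodule $N$ splits as $N=(N\cap M_z)\oplus(N\cap M_y)$ with $N\cap M_z\leq M_z$ and $N\cap M_y\leq M_y$ arbitrary; conversely every such direct sum is a submodule of $M$. This accounts for exactly $s(M_z)\,s(M_y)$ submodules of the first type.

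Next I would treat the submodules $N$ with $N_x\neq 0$, i.e.\ $e_x\in N$. Applying the two arrows out of $x$ forces $e_z,e_y\in N$, so $N$ is supported at both $z$ and $y$; conversely, adjoining $e_x$ to any $N_z\oplus N_y$ with $N_z\leq M_z$ supported at $z$ and $N_y\leq M_y$ supported at $y$ yields a submodule (closure is automatic since $x$ is a peak and the images $e_z,e_y$ already lie in $N_z,N_y$), and this gives a bijection. Thus this class is counted by the product of the number of submodules of $M_z$ supported at $z$ with the analogous number for $M_y$. The step I expect to carry the weight of the argument is the identification of the number of submodules of $M_z$ supported at $z$ with $s(\widetilde{M}_z)$: here I would invoke that in type $A_n$ every simple occurs with multiplicity at most one in $M_z$ (the lemma on composition factors proved earlier), so the $z$-component of $M_z$ is one-dimensional and a submodule is supported at $z$ exactly when it contains the cyclic submodule $\langle e_z\rangle$, the smallest submodule supported at $z$. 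By the correspondence theorem these submodules are in bijection with the submodules of $M_z/\langle e_z\rangle$, and $M_z/\langle e_z\rangle$ is precisely the maximal quotient $\widetilde{M}_z$ of $M_z$ not supported at $z$. Hence their number is $s(\widetilde{M}_z)$, and symmetrically $s(\widetilde{M}_y)$ for the other leg.

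Adding the two counts gives $s(M)=s(M_z)\,s(M_y)+s(\widetilde{M}_z)\,s(\widetilde{M}_y)$, as claimed. The main obstacle is bookkeeping rather than conceptual: verifying the equivalence ``$N$ supported at $z$ $\iff$ $\langle e_z\rangle\subseteq N$'', for which the multiplicity-one property in type $A$ is essential, and checking that the gluing of $e_x$ onto $N_z\oplus N_y$ genuinely produces a submodule and exhausts every submodule containing $S_x$.
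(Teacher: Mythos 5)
Your proof is correct, and its skeleton coincides with the paper's: partition the submodules of $M$ according to whether they contain the (one-dimensional) component at $x$, count the ones that do not as $s(M_z)s(M_y)$, and recognize the ones that do as pairs consisting of a submodule of $M_z$ supported at $z$ and a submodule of $M_y$ supported at $y$. Where you genuinely diverge is in identifying each of those two counts with $s(\widetilde{M}_z)$ and $s(\widetilde{M}_y)$. The paper does this through two dualities: the complementation bijection $N \mapsto M_z/N$ turns submodules supported at $z$ into quotients not supported at $z$, these are exactly the quotients of $\widetilde{M}_z$, and then the submodule--quotient duality for string modules (established at the start of Section~4) converts the count of quotients of $\widetilde{M}_z$ back into $s(\widetilde{M}_z)$. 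You instead note that, by the multiplicity-one property, a submodule of $M_z$ is supported at $z$ if and only if it contains the cyclic submodule $\langle e_z\rangle$, and then apply the correspondence theorem to get a lattice bijection with the submodules of $M_z/\langle e_z\rangle = \widetilde{M}_z$. Your route is slightly more elementary and self-contained, needing only the lattice isomorphism theorem plus multiplicity one, whereas the paper's additionally invokes the string-module duality; both rest on the same multiplicity-one fact for type $A$ cluster-tilted algebras. One small step you assert without justification is that $M_z/\langle e_z\rangle$ really is the maximal quotient of $M_z$ not supported at $z$; this deserves the one-line argument that a quotient $M_z/K$ is not supported at $z$ exactly when $K \supseteq \langle e_z\rangle$, so the minimal admissible kernel is $\langle e_z\rangle$ itself.
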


\begin{proof}
Recall that $s(M)$ stands for the number of isoclasses of submodules of $M$, however in the following computations we only consider submodules up to isomorphisms.  Therefore, we omit the term isoclasses to simplify the notation.  Also, some of the modules defined above can be zero in which case they admit only one submodule, the zero submodule.

{\small

$$\begin{array}{llcr}
s(M)&= \#\Big\{ {\begin{array}{c} \text{submodules of } M\\ \text{ supported on } x\end{array}}\Big\}&+&\# \Big\{{\begin{array}{c}\text{submodules of } M\\ \text{ not supported on } x\end{array}} \Big\}
\end{array}$$}
The right hand summand can be simplified as follows.  

{\small $
\# \Big\{{\begin{array}{c}\text{submodules of } M\\ \text{ not supported on } x\end{array}} \Big\} \;\;= \;\;  \# \{\text{submodules of } M_z\oplus M_y\} \;\; = \;\; s(M_z)s(M_y)$}

\medskip
\noindent Now consider the left hand summand.  

{\small 
\begin{align*}
\#\Big\{ {\begin{array}{c} \text{submodules of } M\\ \text{ supported on } x\end{array}}\Big\} &= \#\Big\{ {\begin{array}{c} \text{submodules of } M\\ \text{ supported on } x, y, z \end{array}}\Big\} \\
  &= \#\Big\{ {\begin{array}{c} \text{submodules of } M_z\\ \text{ supported on } z\end{array}}\Big\}\,\times \,\#\Big\{ {\begin{array}{c} \text{submodules of } M_y\\ \text{ supported on } y\end{array}}\Big\}\\
  &=\#\Big\{ {\begin{array}{c} \text{quotients of } M_z\\ \text{ not supported on } z\end{array}}\Big\}\,\times\,\#\Big\{ {\begin{array}{c} \text{quotients of } M_y\\ \text{ not supported on } y\end{array}}\Big\}\\
  &=\#\{ \text{quotients of } \widetilde{M}_z\}\,\times\,\#\{ \text{quotients of } \widetilde{M}_y \}\\
&=\#\{ \text{submodules of } \widetilde{M}_z\}\,\times\,\#\{ \text{submodules of } \widetilde{M}_y \}\\
&=s(\widetilde{M}_z)s(\widetilde{M}_y) 
\end{align*}}
\end{proof}

The following statement is the dual version of the lemma above, so we state it without proof.  

\begin{lemma}\label{lem2}
Let $M \in\textup{ind}\,B$ such that $M = (\dots z \to x \leftarrow y \dots)$, and consider two indecomposable quotients $M_z, M_y$ of $M$ supported on $z,y$ respectively, such that there exists a short exact sequence 

$$0\to S_x \to M \to M_z\oplus M_y \to 0$$
ending in a simple module $S_x$.  Then 

$$s(M) = s(M_z) s(M_y) + s(\overline{M}_z) s(\overline{M}_y)$$
where $\overline{M}_z, \overline{M}_y$ are maximal submodules of $M_z, M_y$ respectively, that are not supported on $z, y$.  
\end{lemma}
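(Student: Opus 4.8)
The plan is to deduce the statement directly from Lemma~\ref{lem1} by applying the standard $k$-linear duality $D = \Hom_k(-,k)\colon \modcat B \to \modcat B^{\mathrm{op}}$, which converts the valley configuration $z \to x \leftarrow y$ into the peak configuration $z \leftarrow x \to y$ and interchanges submodules with quotients. Two facts make this reduction work. First, $B^{\mathrm{op}}$ is again a cluster-tilted algebra of type $A_n$ (its quiver is the quiver of the same triangulation read with the reversed orientation), so Lemma~\ref{lem1} is available for $B^{\mathrm{op}}$-modules. Second, for every string module $N$ one has $s(N) = s(DN)$: the functor $D$ gives an (order-reversing) bijection between submodules of $N$ and quotients of $DN$, and the number of quotients of a string module equals its number of submodules, as recorded at the start of Section~\ref{S:Number of submodules}.

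Next I would transport the data of the lemma under $D$. The dual $DM$ is a string module over $B^{\mathrm{op}}$ of shape $(\dots z \leftarrow x \to y \dots)$, the indecomposable quotients $M_z, M_y$ of $M$ become indecomposable submodules $DM_z, DM_y$ of $DM$ supported on $z$ and $y$, and applying the exact contravariant functor $D$ to the short exact sequence $0 \to S_x \to M \to M_z \oplus M_y \to 0$ yields
\[
0 \to DM_z \oplus DM_y \to DM \to S_x \to 0 ,
\]
using $DS_x \cong S_x$. This is precisely the hypothesis of Lemma~\ref{lem1} for $DM$. Moreover, since $D$ restricts to an (anti)isomorphism between the submodule lattice of $M_z$ and the quotient lattice of $DM_z$ and preserves the composition-factor condition ``not supported on $z$'' (because $DS_z \cong S_z$), it carries the maximal submodule $\overline{M}_z$ of $M_z$ not supported on $z$ to the maximal quotient $\widetilde{DM}_z$ of $DM_z$ not supported on $z$ appearing in Lemma~\ref{lem1}; likewise $D\overline{M}_y = \widetilde{DM}_y$.

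I would then invoke Lemma~\ref{lem1} for $DM$ to obtain
\[
s(DM) = s(DM_z)\, s(DM_y) + s(\widetilde{DM}_z)\, s(\widetilde{DM}_y) ,
\]
and translate termwise using $s(N) = s(DN)$ together with $\widetilde{DM}_z = D\overline{M}_z$ and $\widetilde{DM}_y = D\overline{M}_y$, arriving at $s(M) = s(M_z)\, s(M_y) + s(\overline{M}_z)\, s(\overline{M}_y)$. The only point requiring genuine care is the support bookkeeping, namely verifying that ``maximal submodule not supported on $z$'' dualizes exactly to ``maximal quotient not supported on $z$'' and so matches the $\widetilde{(-)}$ construction of Lemma~\ref{lem1}; this is where the argument could silently go wrong, so I would check it against the explicit string description of Figure~\ref{F:stringdetail} rather than purely abstractly. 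If one prefers to avoid passing to $B^{\mathrm{op}}$, the identity also follows by mirroring the proof of Lemma~\ref{lem1} directly: split the submodules of $M$ according to whether they are supported at the valley $x$. Since $S_x$ occurs with multiplicity one, the submodules containing the socle copy $S_x$ are exactly the submodules of $M/S_x \cong M_z \oplus M_y$, contributing $s(M_z)\, s(M_y)$; and a submodule avoiding $x$ must (as $z \to x$ and $y \to x$ force $x$ once $z$ or $y$ is present) avoid $z$ and $y$ as well, so it splits along the two legs into a submodule of $\overline{M}_z$ and a submodule of $\overline{M}_y$, contributing $s(\overline{M}_z)\, s(\overline{M}_y)$.
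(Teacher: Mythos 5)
Your proposal is correct and takes essentially the same route as the paper: the paper simply declares this lemma to be ``the dual version'' of Lemma~\ref{lem1} and states it without proof, and your argument (dualizing via $D=\Hom_k(-,k)$ to reduce to Lemma~\ref{lem1} over $B^{\mathrm{op}}$, with the alternative direct count of submodules through the valley as a cross-check) is precisely the careful execution of that assertion. The support bookkeeping you flag does go through, since $D$ preserves dimension vectors, so ``maximal submodule of $M_z$ not supported on $z$'' dualizes exactly to the maximal quotient $\widetilde{(DM_z)}$ not supported on $z$ required by Lemma~\ref{lem1}.
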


The next lemma shows the final formula for $s(M)$.  

\begin{lemma}\label{lem3}
Let $M\in\textup{ind}\,B$ such that $M = (\dots x \to y \dots)$, and consider two indecomposable modules $M_x, M_y$ supported on $x, y$ respectively such that there exists a short exact sequence 

$$0\to M_y \to M \to M_x \to 0.$$
Then 

$$s(M)=s(M_y)s(M_x)-s(\widetilde{M}_x)s(\overline{M}_y)$$
where $\widetilde{M}_x$ is a maximal quotient of $M_x$ not supported on $x$, and $\overline{M}_y$ is a maximal submodule of $M_y$ not supported on $y$.  
\end{lemma}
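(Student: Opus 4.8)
The plan is to reduce the entire statement to a count of \emph{closed subsets} of the string underlying $M$. Since $B$ is cluster-tilted of type $A_n$, the module $M$ is a string module which, by the results recalled in Section~\ref{S:Description of the modules}, is multiplicity-free: each vertex of $Q$ occurs at most once in its support and every structure map along the string is an isomorphism. First I would record the elementary but crucial observation that a subrepresentation of such a string module is determined by its support, and that a subset $\Sigma$ of the vertices of the string is the support of a (unique) submodule precisely when it is closed under the arrows of the string, i.e.\ whenever $v\in\Sigma$ and $v\to w$ is an arrow of the string then $w\in\Sigma$. Thus $s(M)$ equals the number of such closed subsets, and a submodule is supported at a vertex $v$ if and only if its closed subset contains $v$. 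This is the combinatorial engine underlying the companion Lemmas~\ref{lem1} and~\ref{lem2}, and I would use it here in the same spirit.

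Next I would exploit the distinguished arrow $x\to y$. It is the unique edge of the string joining the substring carrying the quotient $M_x$ to the substring carrying the submodule $M_y$, so deleting it disconnects the string into exactly these two pieces. Given a closed subset $\Sigma$ of $M$, its restrictions $\Sigma_x=\Sigma\cap M_x$ and $\Sigma_y=\Sigma\cap M_y$ are closed in $M_x$ and $M_y$ respectively; conversely any pair of closed subsets glues back to a closed subset of $M$ subject to the single cross-constraint coming from the arrow $x\to y$, namely $x\in\Sigma_x\Rightarrow y\in\Sigma_y$. This yields a bijection between submodules of $M$ and pairs $(\Sigma_x,\Sigma_y)$ of submodules of $M_x$ and $M_y$ satisfying that implication, so that
\[
s(M)=\#\{(\Sigma_x,\Sigma_y)\mid x\in\Sigma_x\Rightarrow y\in\Sigma_y\}.
\]

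Then I would carry out the count by complementation. The number of all pairs is $s(M_x)\,s(M_y)$, and the forbidden pairs are exactly those with $x\in\Sigma_x$ and $y\notin\Sigma_y$, which factor as $\#\{\Sigma_x\mid x\in\Sigma_x\}\cdot\#\{\Sigma_y\mid y\notin\Sigma_y\}$. To finish I would identify these two factors with $s(\widetilde M_x)$ and $s(\overline M_y)$: the submodules of $M_x$ supported at $x$ are exactly those containing the smallest submodule $U$ with $x\in\supp U$, hence correspond bijectively to submodules of the maximal quotient $\widetilde M_x=M_x/U$ not supported on $x$; dually, the submodules of $M_y$ avoiding $y$ are exactly the submodules of the maximal submodule $\overline M_y$ not supported on $y$. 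Substituting gives $s(M)=s(M_x)\,s(M_y)-s(\widetilde M_x)\,s(\overline M_y)$, which is the claim (the same computation can be arranged as a case split according to whether a submodule is supported at $x$, giving $(s(M_x)-s(\widetilde M_x))s(M_y)+s(\widetilde M_x)(s(M_y)-s(\overline M_y))$, as a cross-check).

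The main obstacle is the justification that each admissible pair $(\Sigma_x,\Sigma_y)$ lifts to \emph{exactly one} submodule of $M$, with no further choices in the gluing: this is precisely where the type $A_n$ hypothesis is essential, since multiplicity-freeness forces every structure map to be an isomorphism and hence a submodule to be rigidly determined by its support. I would also take care to phrase the two factor-identifications so that they hold irrespective of whether $x$ is a peak or a valley of the string—the description of $\widetilde M_x$ as $M_x/U$ for the smallest submodule $U$ with $x\in\supp U$ is insensitive to this—thereby avoiding a case distinction on the local shape at $x$ and $y$. Finally I would verify the formula on the running example $M=(1\leftarrow 2\to 3\to 4\to 5\leftarrow 6)$: cutting at the arrow $3\to 4$ gives $s(M_x)=s(M_y)=5$ and $s(\widetilde M_x)=s(\overline M_y)=3$, so that $5\cdot 5-3\cdot 3=16=s(M)$, in agreement with the earlier computation.
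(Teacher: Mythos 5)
Your proof is correct and takes essentially the same approach as the paper's: both cut the string at the arrow $x \to y$, count submodules of $M$ as pairs of submodules of $M_x$ and $M_y$ subject to the single constraint imposed by that arrow, and identify the correction term as $s(\widetilde{M}_x)\,s(\overline{M}_y)$ — your inclusion-exclusion is just a rearrangement of the paper's case split on whether a submodule is supported at $x$, which you yourself note as a cross-check. Making the closed-subset model of submodules explicit is a nice way of spelling out what the paper uses implicitly, but it is not a different argument.
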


\begin{proof}
In the following computation, we omit the term isoclasses when referring to the number of submodules (up to isomorphisms).  Also, some of the modules defined above can be zero in which case they admit only one submodule, the zero submodule.  Given a module $M$ as in the statement we have

{\small

$$\begin{array}{llcr}
s(M)&= \#\Big\{ {\begin{array}{c} \text{submodules of } M\\ \text{ supported on } x\end{array}}\Big\}&+&\# \Big\{{\begin{array}{c}\text{submodules of } M\\ \text{ not supported on } x\end{array}} \Big\}
\end{array}.$$}
The left hand summand can be reinterpreted as follows. 

{\Small 
\begin{align*}
\#\Big\{ {\begin{array}{c} \text{submodules of } M\\ \text{supported on } x\end{array}}\Big\}  &= \#\Big\{ {\begin{array}{c} \text{submodules of } M\\ \text{supported on } x, y \end{array}}\Big\} \\
  &= \#\Big\{ {\begin{array}{c} \text{submodules of } M_x\\ \text{supported on } x\end{array}}\Big\}\times \#\Big\{ {\begin{array}{c} \text{submodules of } M_y\\ \text{supported on } y\end{array}}\Big\}\\
  &=\#\Big\{ {\begin{array}{c} \text{quotients of } M_x\\ \text{not supported on } x\end{array}}\Big\}\times \Big[\#\{\text{submodules of } M_y\}-\#\Big\{ {\begin{array}{c}\text{submodules of } {M}_y\\ \text{not supported on } y\end{array}}\Big\}\Big]\\
  &=\#\{ \text{quotients of } \widetilde{M}_x\}\times\Big[\#\{\text{submodules of } M_y\}-\#\{ \text{submodules of } \overline{M}_y\}\Big]\\
&=\#\{ \text{submodules of } \widetilde{M}_x\}\,\times\,\big[s(M_y)-s(\overline{M}_y)\big] \\
&=s(\widetilde{M}_x) (s(M_y)-s(\overline{M}_y)) 
\end{align*}}

The right hand summand equals $s(M_y) s(\overline{M}_x)$, where $\overline{M}_x$ is the maximal submodule of $M_x$ not supported on $x$.  Using similar reasoning as above, we have $s(\overline{M}_x) = s(M_x) - s(\widetilde{M}_x)$.  Therefore, combining the two computations above we obtain the desired result. 

\begin{align*}
s(M) &= s(\widetilde{M}_x) (s(M_y)-s(\overline{M}_y)) + s(M_y)(s(M_x) - s(\widetilde{M}_x))\\
&= s(M_y)s(M_x)-s(\widetilde{M}_x)s(\overline{M}_y)
\end{align*} 
\end{proof}

\begin{remark}\label{remark01}
With the notation of Lemma~\ref{lem3} we emphasize two special cases when one of the modules $M_x, M_y$ is a simple module. 
\begin{itemize}

\item[(a)] If $M_y = S_y$ then $M=(\dots x \to y)$, and we have $s(M) = 2 s(M_x) - s(\widetilde{M}_x)$, because $s(S_y)=2$ and $s(\overline{S}_y) = s(0)=1$.  It will be useful to rewrite the above equation in the following way 
$$s(M)-s(M_x) = s(M_x) - s(\widetilde{M}_x) = s(\overline{M}_x)$$
where $\overline{M}_x$ is the maximal submodule of $M_x$ not supported at $x$. 

\item[(b)] Similarly, if $M_x =S_x$ then $M=(x\to y \dots)$, and we have $s(M) = 2 s(M_y) - s(\overline{M}_y).$  Also, we can rewrite this equation as
$$s(M)-s(M_y) = s(M_y) - s(\overline{M}_y)=s(\widetilde{M}_y)$$
where $\widetilde{M}_y$ is the maximal quotient of $M_y$ not supported at $y$.  
\end{itemize}
\end{remark}

We will also need the following result. 

\begin{lemma}\label{lem04}
Let $M_x \in \textup{ind}\,B$ such that $M_x = (x \dots )$, and consider two modules $M_{x_y}, M_{x^z}$ obtained by extending $M_x$ by simple modules $S_y, S_z$, respectively,  such that there exist short exact sequences as follows. 

$$0\to S_y \to M_{x_y} \to M_x \to 0 \hspace{2cm}  0\to M_x \to M_{x^z} \to S_z \to 0$$
Then 

$$s(M_{x_y}) - s(M_x) = 2 s (M_x) - s(M_{x^z}). $$
\end{lemma}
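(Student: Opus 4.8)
The plan is to recognize both short exact sequences in the statement as instances of the two special cases already isolated in Remark~\ref{remark01}, and to check that the same correction term appears in each. The essential observation is that the simple $S_y$ is glued onto $M_x$ \emph{below} its endpoint vertex $x$ (as a socle, producing a submodule), while $S_z$ is glued onto $M_x$ \emph{above} the same endpoint $x$ (as a top, producing a quotient); so $x$ is the attaching vertex in both extensions. Once this is fixed, everything reduces to two invocations of Remark~\ref{remark01}.

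First I would apply Remark~\ref{remark01}(b) to the sequence $0\to M_x\to M_{x^z}\to S_z\to 0$. Here $S_z$ is the simple quotient attached at the endpoint $x$ of $M_x=(x\dots)$, so with the roles matched (the submodule being $M_x$ and the attaching vertex being $x$) the remark yields $s(M_{x^z})=2s(M_x)-s(\overline{M}_x)$, where $\overline{M}_x$ is the maximal submodule of $M_x$ not supported at $x$. Rearranging gives $2s(M_x)-s(M_{x^z})=s(\overline{M}_x)$, which is exactly the right-hand side of the claimed identity. Next I would apply Remark~\ref{remark01}(a) to the sequence $0\to S_y\to M_{x_y}\to M_x\to 0$. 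Now $S_y$ is the simple submodule attached at the same endpoint $x$, with $M_x$ as quotient, and the rewritten form of the remark reads $s(M_{x_y})-s(M_x)=s(\overline{M}_x)$ with the \emph{same} $\overline{M}_x$ (maximal submodule of $M_x$ not supported at $x$). Comparing the two displayed equalities gives $s(M_{x_y})-s(M_x)=s(\overline{M}_x)=2s(M_x)-s(M_{x^z})$, which is the assertion.

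The point I expect to require the most care is precisely the bookkeeping that makes the two correction terms coincide: I must verify that ``maximal submodule of $M_x$ not supported at $x$'' means the same module $\overline{M}_x$ in both applications, which rests on $x$ being a single endpoint of the string $M_x$ at which the socle extension (giving $M_{x_y}$) and the top extension (giving $M_{x^z}$) are performed. I would justify this using the string description of Construction~\ref{Construction:mods} together with the multiplicity-one property (each simple occurs at most once as a composition factor of an indecomposable $B$-module), so that the component of $M_x$ at $x$ is one-dimensional and ``removing $x$'' is unambiguous.

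Finally I would dispose of the degenerate situations: if $M_x=S_x$ is simple, then $\overline{M}_x=0$ and $s(\overline{M}_x)=s(0)=1$, and both Remark~\ref{remark01}(a) and (b) still apply verbatim (as noted there), so the identity continues to hold; the same remark about treating vanishing submodules as contributing the zero submodule handles any case where $\overline{M}_x=0$. No further computation is needed, since the whole argument is a direct comparison of two already-established formulas rather than a fresh induction.
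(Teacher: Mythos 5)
Your proof is correct and takes essentially the same approach as the paper's: both arguments consist of applying Remark~\ref{remark01}(a) to the first sequence and Remark~\ref{remark01}(b) to the second (with the attaching vertex $x$ the same endpoint in both), and then eliminating the common correction term. The only cosmetic difference is that you eliminate $s(\overline{M}_x)$, the maximal submodule of $M_x$ not supported at $x$, whereas the paper eliminates $s(\widetilde{M}_x)$, the maximal quotient; these are interchangeable via the identity $s(M_x)-s(\widetilde{M}_x)=s(\overline{M}_x)$ already recorded in the remark, so the two proofs coincide.
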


\begin{proof}
Applying Remark~\ref{remark01}(a) to the first short exact sequence with $M= M_{x_y}$, we see that 
$$s(M_{x_y}) = 2 s(M_x) - s(\widetilde{M}_x)$$
where $\widetilde{M_x}$ is the maximal quotient of $M_x$ not supported at $x$.  Now applying Remark~\ref{remark01}(b) to the second short exact sequence with $M = M_{x^z}$ and $M_y= M_x$, we obtain  
$$s(M_{x^z})-s(M_x) = s(\widetilde{M}_x).$$
Solving this equation for $s(\widetilde{M}_x)$ and substituting this resulting expression into the the first equation we obtain the desired formula.  
\end{proof}

\subsection{Mutation of frieze}\label{ssec:computing-delta}
$\ $

We next present the main result of this section, the effect of flip on the generalized 
Caldero Chapoton map, i.e. the description of the frieze difference 
$\delta_a$. 
We begin by introducing the necessary notation. 

Depending on the position of 
an indecomposable object $M$ we define several projection maps sending 
$M$ to objects on the eight rays from Section~\ref{ssec:diagonal-rays}.

%

Let $M\in\text{ind}\,B$, and let $\mathfrak{i}$ be one of the sectional paths defined in 
section~\ref{ssec:diagonal-rays}.  Suppose $M \not\in\mathfrak{i}$, then we denote by 
$M_i$ a module on $\mathfrak{i}$ if there exists a sectional path $M_i \to \dots \to M$ or 
$M\to \dots \to M_i$ in $\mathcal{C}_f$, otherwise we let $M_i=0$.  If $M \in\mathfrak{i}$ 
then we let $M_i=M$.  
In the case when it is well-defined, we call $M_i$ the \emph{projection} of $M$ onto 
the path $\mathfrak{i}$.

\begin{remark}\label{rm-projections}
Given an indecomposable module $M$ and its projection $M_i$, the module $M_i$ can be easily described in terms of the support of $M$ and the location of the ray $\mathfrak{i}$.  For example, if $M\in \mathcal{B}\cap \mathcal{C}$ then $M = (\dots b\to a \to c \dots)$.  Note that $M_{c^a}$ is not supported at $b$, and there exists an $B$-module homomorphism  $M_{c^a}\to M$ that does not factor through any other module lying on the ray $\mathfrak{c}^a$.  This implies that $M_{c^a} = (a\to c \dots )$ is the maximal submodule of $M$ that is not supported at $b$.  Similarly, $M_{b_a} = (\dots b \to a)$ is the maximal quotient of $M$ not supported at $c$.   

On the other hand if we consider $M\in \mathcal{B}\cap \mathcal{E}$ then $M = (\dots e\to b \dots )$.  As above, the projection $M_b$ onto the ray $\mathfrak{b}$ is the largest submodule of $M$ not supported at $e$.  However, $M_{b_a}$ is obtained from $M_b$ by extending it by a simple module $S_a$, thus $M_{b_a}= (a \leftarrow b \dots)$ and there exists a short exact sequence 
$$0\to S_a \to M_{b_a} \to M_b \to 0.$$
Note that unlike in the previous computations $M_{b_a}$ is neither a quotient nor a submodule of $M$.  

In this way given $M$ we can construct the corresponding $M_i$, provided that the projection is well defined.  
\end{remark}

It will be convenient to write these projections in a uniform way. 


\begin{defi}[Projections] 
If $(x,y)$ is one of the pairs 
$\{(b,c),(d,e),(b,e),(c,d)\}$, the region $\mathcal X\cap \mathcal Y$ 
has two paths along its boundary and 
two paths further backwards or forwards met along the two sectional paths through any vertex $M$ of 
$\mathcal X\cap \mathcal Y$.
We call the backwards projection onto the first path 
$\pi_1^-(M)$ and the projection onto the second path 
$\pi_2^-(M)$. The forwards 
projection onto the first path is denoted by $\pi_1^+(M)$ and the one onto the second path $\pi_2^+(M)$. 
\end{defi}
Figure~\ref{fig:projections-case-a} illustrates these projections in the case 
$(x,y)\in \{(b,c),(d,e)\}$.

\smallskip

The remaining two regions will be treated together with the surrounding paths. 

\begin{defi}
The {\em closure of $\mathcal C\cap \mathcal E$} is the 
Hom-hammock 
$$
\overline{\mathcal C\cap \mathcal E}=\ind(\Hom_{\mathcal C_f}(P_a[1],-)\cap \Hom_{\mathcal C_f}(-,S_a))
$$ 
in $\mathcal C_f$ starting at $P_a[1]$ and ending at $S_a$. 
Similarly, 
the {\em closure of $\mathcal B\cap \mathcal D$} is the Hom-hammock 
$$
\overline{\mathcal B\cap \mathcal D}=\ind(\Hom_{\mathcal C_f}(S_a,-)\cap \Hom_{\mathcal C_f}(-,P_a[1]))
$$
in $\mathcal C_f$ starting at $S_a$ and ending at $P_a[1]$. 
For $(x,y)\in \{(c,e),(b,d)\}$, the {\em boundary of $\overline{\mathcal X\cap \mathcal Y}$} 
(or of 
$\mathcal X\cap \mathcal Y$) is 
$\overline{\mathcal X\cap \mathcal Y}\setminus(\mathcal X\cap \mathcal Y)$.  
\end{defi}

Note that $\overline{\mathcal C\cap \mathcal E}$ is the union of 
$\mathcal C\cap \mathcal E$ with the surrounding rays and the shifted projectives 
$\{P_b[1],P_d[1]\}$. Analogously, 
$\overline{\mathcal B\cap \mathcal D}$ contains $\{P_c[1],P_e[1]\}$. 

\begin{figure}
\includegraphics[height=5.5cm]{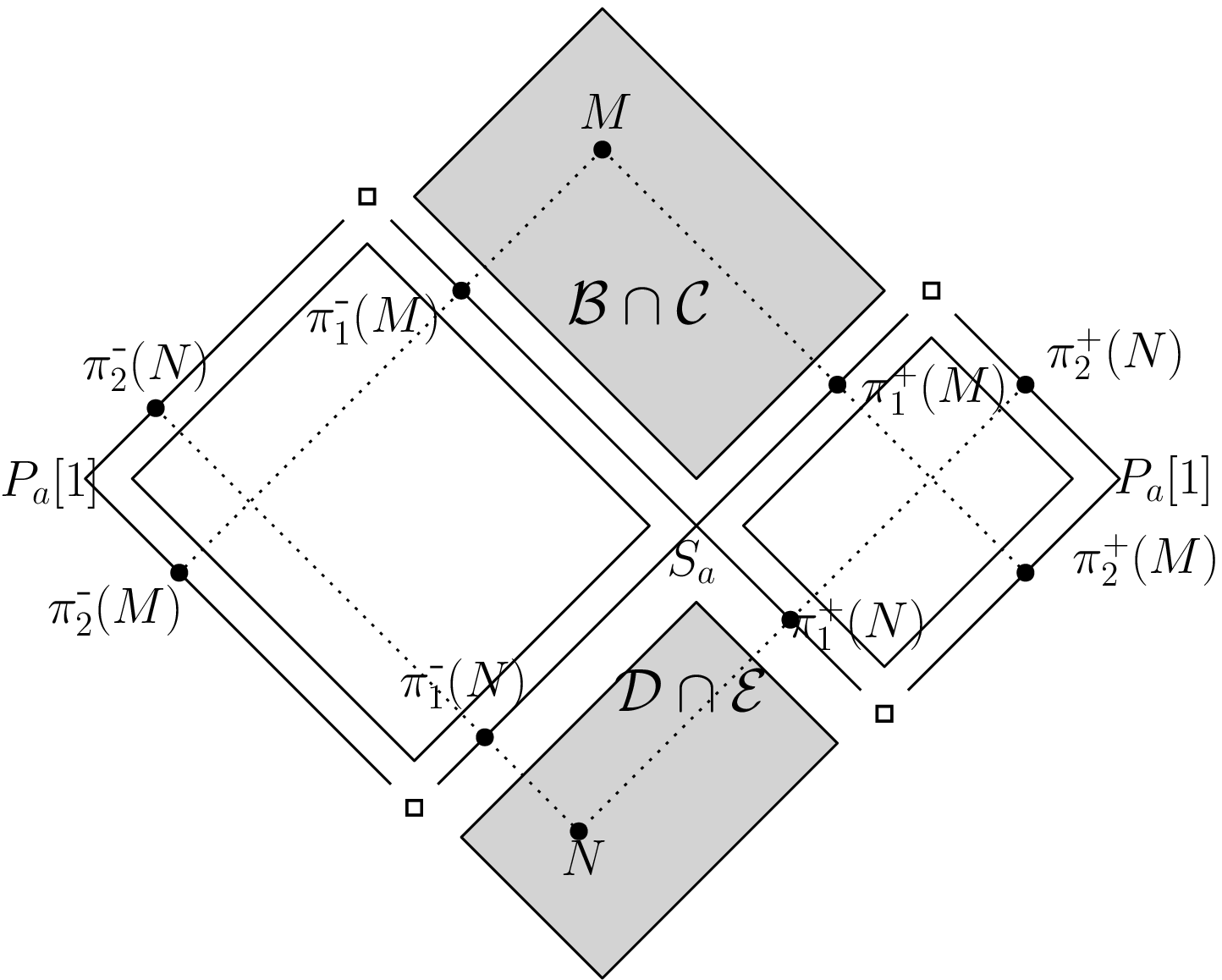}
\caption{Projections for $\mathcal B\cap \mathcal C$, $\mathcal D\cap \mathcal E$}\label{fig:projections-case-a}
\end{figure}

\begin{figure}
\includegraphics[height=5cm]{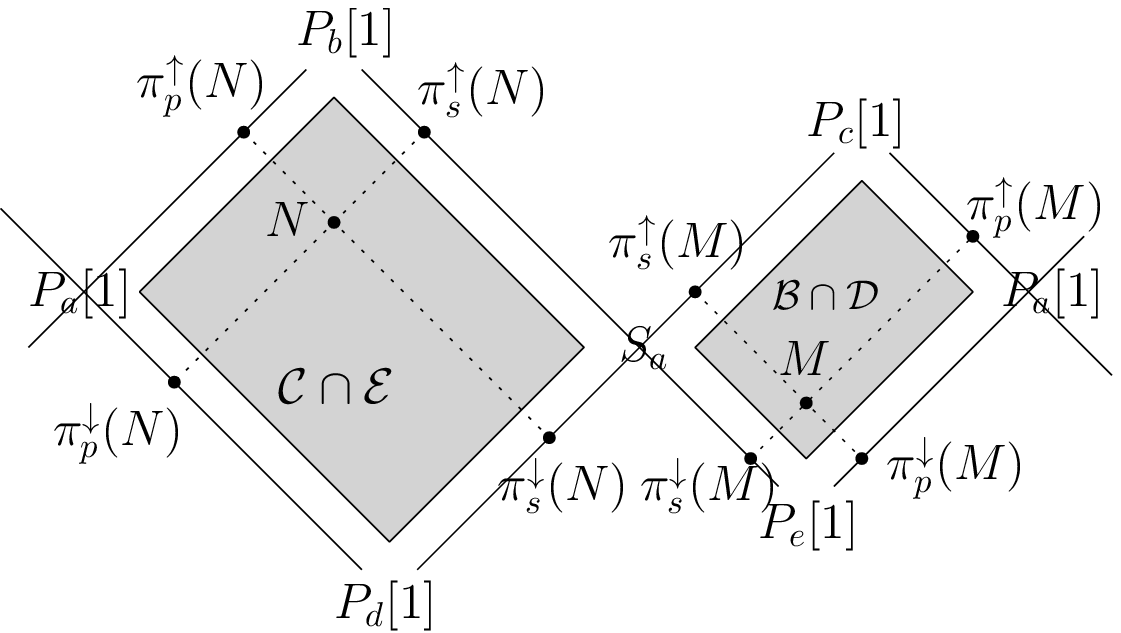}
\caption{}\label{fig:projections-bar}
\end{figure}
\begin{defi}[Projections, continued] \label{def:projections-bar}
If $M$ is a vertex of one of the two closures, 
we define 
four projections for $M$ onto the four different ``edges'' of the boundary of 
its region: 
We denote the 
projections onto the paths starting or ending next to $P_a[1]$ by 
$\pi_{p}^{\uparrow}$, $\pi_{p}^{\downarrow}$ 
and the projections onto the paths starting or ending next to $S_a$ 
by $\pi_{s}^{\uparrow}$ and $\pi_{s}^{\downarrow}$ respectively. 
We choose the upwards arrow to refer to the paths ending/starting near $P_b[1]$ or 
$P_c[1]$ and the downwards arrow to refer to paths ending/starting near 
$P_d[1]$ or $P_e[1]$.  
See Figure~\ref{fig:projections-bar}. 
\end{defi}

\begin{rem}
The statement of Theorem~\ref{thm01} is independent of the 
choice of  $\uparrow$ (paths near $P_b[1]$ or $P_c[1]$) and 
$\downarrow$ in Definition~\ref{def:projections-bar} 
as the formula is symmetric in these expressions. 
\end{rem}

\begin{ex}\label{ex-projections}
If $M\in \mathfrak e$, we have 
$\pi_p^{\uparrow}(M)=M$, $\pi_s^{\uparrow}(M)=P_b[1]$, $\pi_p^{\downarrow}(M)=P_a[1]$ 
and $\pi_s^{\downarrow}(M)=M_{\mathfrak e^a}$. 

For $S_a$  we have 
$\pi_s^{\uparrow}(S_a)=\pi_s^{\downarrow}(S_a)=S_a$ whereas the two modules 
$\pi_p^{\uparrow}(S_a)$ and $\pi_p^{\downarrow}(S_a)$ are  $\{P_b[1],P_d[1]\}$ or 
$\{P_c[1],P_e[1]\}$ depending on whether $S_a$ is viewed as an element of 
$\overline{\mathcal C\cap \mathcal E}$ or of $\overline{\mathcal B\cap \mathcal D}$. 

For $P_a[1]$, we have 
$\pi_p^{\uparrow}(P_a[1])=\pi_p^{\downarrow}(P_a[1])=P_a[1]$ whereas the two modules 
$\pi_s^{\uparrow}(P_a[1])$ and $\pi_s^{\downarrow}(P_a[1])$ are $\{P_b[1],P_d[1]\}$ or 
$\{P_c[1],P_e[1]\}$
These four shifted projectives evaluate to $1$ under $s$, and so in Theorem~\ref{thm01}, 
this ambiguity does not play a role. 
\end{ex}

With this notation we are ready to state the theorem. 
 
\begin{theorem}\label{thm01}
Let $M$ be an indecomposable object of $\mathcal{C}_f$. Then $\delta_a(M)$ is given by: \\
If $M\in (\mathcal B\cap \mathcal C) \cup (\mathcal D\cap \mathcal E)$ then
\[
 \delta_a(M) = (s(\pi_1^+(M))-s(\pi_2^+(M)))\,(s(\pi_{1}^-(M))-s(\pi_{2}^-(M));
\]
If $M\in (\mathcal B\cap \mathcal E) \cup (\mathcal C\cap \mathcal D)$ then
\[
 \delta_a(M) = -(s(\pi_2^+(M))-2s(\pi_1^+(M)))\,(s(\pi_{2}^-(M))-2s(\pi_{1}^-(M)) ;
\]
If $M\in \overline{\mathcal C\cap \mathcal E}\cup\overline{\mathcal B\cap \mathcal D}$ then
\[
 \delta_a(M) = s(\pi_{s}^{\downarrow}(M))s(\pi_{p}^{\downarrow}(M)) + s(\pi_{s}^{\uparrow}(M))s(\pi_{p}^{\uparrow}(M))  - 3\,s(\pi_p^{\downarrow}(M))s(\pi_p^{\uparrow}(M)); 
\]
If $M\in \mathcal F$ then
\[
 \delta_a(M) =0.
\]

\end{theorem}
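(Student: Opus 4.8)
The plan is to reduce the statement to the submodule counts $s(-)$ of Section~\ref{S:Number of submodules} and then to run a case analysis over the regions, applying the short exact sequence recursions of Lemmas~\ref{lem1}, \ref{lem2}, \ref{lem3} and \ref{lem04} in each. By Section~\ref{ssec:spec-CC} the specialized Caldero Chapoton map sends an indecomposable $B$-module to its number of submodules and sends $P_a[1]$, the shifted projectives and the boundary objects to $1$; as $s(0)=1$ by convention, I may write $\delta_a(M)=s(M)-s(M')$ uniformly, reading off the value $1$ for any projection that happens to be a shifted projective, the object $S_a$, or the zero module (cf.\ Example~\ref{ex-projections}). The geometric input is Section~\ref{ssec:mutating-regions}: mutation at $a$ changes the support of an indecomposable only at the vertex $a$, and the region descriptions for $B'$ record precisely how. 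Thus $M'$ is obtained from $M$ by deleting the interior vertex $a$ on $\mathcal{B}\cap\mathcal{C}$ and $\mathcal{D}\cap\mathcal{E}$, by inserting $a$ as an interior vertex on $\mathcal{B}\cap\mathcal{E}$ and $\mathcal{C}\cap\mathcal{D}$, by exchanging socle and top at $a$ on the interiors of $\overline{\mathcal{C}\cap\mathcal{E}}$ and $\overline{\mathcal{B}\cap\mathcal{D}}$ while gaining or losing $a$ on their four bounding rays, and by leaving $M$ untouched on $\mathcal{F}$.

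The region $\mathcal{F}$ is settled immediately: its objects are unsupported at $a$ and hence unchanged by the mutation (Section~\ref{ssec:regions}), so $M'=M$, $s(M)=s(M')$ and $\delta_a(M)=0$.

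For the ``through'' regions I would cut the string at the arrow adjacent to $a$ and apply Lemma~\ref{lem3}. For $M=(\dots b\to a\to c\dots)\in\mathcal{B}\cap\mathcal{C}$, the cut at $a\to c$ gives $s(M)=s(M_c)s(M_a)-s(\widetilde{M}_a)s(\overline{M}_c)$, while the cut of $M'=(\dots b\to c\dots)$ at $b\to c$ gives $s(M')=s(M'_b)s(M_c)-s(\widetilde{M'_b})s(\overline{M}_c)$; the $c$-side pieces coincide for $M$ and $M'$, and $\widetilde{M}_a=M'_b$. Rewriting $s(M_a)-s(M'_b)=s(M'_b)-s(\widetilde{M'_b})$ by Remark~\ref{remark01}(a), the difference telescopes to $\delta_a(M)=(s(M_c)-s(\overline{M}_c))(s(M'_b)-s(\widetilde{M'_b}))$, which is the first formula once $M_c,\overline{M}_c$ and the two nested $b$-legs are identified with $\pi_1^+,\pi_2^+$ and $\pi_1^-,\pi_2^-$. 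The ``direct'' regions $\mathcal{B}\cap\mathcal{E}$, $\mathcal{C}\cap\mathcal{D}$ are dual: now $a$ is inserted, and running the same computation backwards produces $\delta_a(M)=-s(\overline{M}_e)s(\widetilde{M}_b)$. Here Lemma~\ref{lem04} is exactly what rewrites each factor in the form $s(\pi_2^\pm(M))-2s(\pi_1^\pm(M))$, the projections now being taken onto the $a$-containing rays, which yields the second formula with its coefficients $2$ and overall sign. The partners $\mathcal{D}\cap\mathcal{E}$ and $\mathcal{C}\cap\mathcal{D}$ follow from the left--right symmetry of the quiver about $a$.

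The main obstacle is the two closures. On the interior of $\overline{\mathcal{C}\cap\mathcal{E}}$, where $a$ is a peak, Lemma~\ref{lem1} gives $s(M)=s(M_c)s(M_e)+s(\widetilde{M}_c)s(\widetilde{M}_e)$, and after the peak/valley exchange the dual Lemma~\ref{lem2} gives $s(M')$ with the same leg data, so $s(M)=s(M')$ and $\delta_a(M)=0$; the substance is to verify that the single three-term expression with coefficient $-3$ collapses to $0$ on the interior and simultaneously reproduces the genuine change of $s$ along each of the four bounding rays $\mathfrak{e},\mathfrak{c},\mathfrak{c}^a,\mathfrak{e}^a$, where $a$ is actually gained or lost. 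The delicate points, and where the real work lies, are (i) pinning down the four projections $\pi_p^{\uparrow},\pi_p^{\downarrow},\pi_s^{\uparrow},\pi_s^{\downarrow}$ as explicit sub- or quotient modules of $M$ in every position of the hammock, using Remark~\ref{rm-projections}; (ii) invoking Lemma~\ref{lem04} to pass between the $a$-containing and $a$-free projections so that the factor $-3$ emerges uniformly; and (iii) treating the degenerate configurations in which a projection becomes $P_a[1]$, $S_a$ or $0$ and must be assigned the value $1$ (Example~\ref{ex-projections}), so that one formula holds throughout each region. Combining these with the glide and left--right symmetries yields all four stated expressions.
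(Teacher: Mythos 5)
Your handling of $\mathcal F$, of the regions $(\mathcal B\cap \mathcal C)\cup(\mathcal D\cap \mathcal E)$, and of $(\mathcal B\cap \mathcal E)\cup(\mathcal C\cap \mathcal D)$ is essentially the paper's own argument: cut the string at an arrow adjacent to $a$, apply Lemma~\ref{lem3} to parallel short exact sequences for $M$ and $M'$, telescope the difference via Remark~\ref{remark01}, and use Lemma~\ref{lem04} to rewrite the resulting factors as differences of $s$-values of projections onto the rays (you cut at $a\to c$ where the paper cuts on the $b$-side, which is immaterial, though note that your final factors $s(M_c)-s(\overline M_c)$ and $s(M_b)-s(\widetilde M_b)$ still need one more application of Remark~\ref{remark01} to become the ray-projections $s(\pi_i^{\pm})$ appearing in the statement).

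However, your treatment of the closure regions contains a genuine error. You claim that on the interior of $\overline{\mathcal C\cap \mathcal E}$ the dual Lemma~\ref{lem2} ``gives $s(M')$ with the same leg data, so $s(M)=s(M')$ and $\delta_a(M)=0$,'' and that the three-term expression with coefficient $-3$ ``collapses to $0$ on the interior.'' This is false: Lemma~\ref{lem1} gives $s(M)=s(M_e)s(M_c)+s(\widetilde M_e)s(\widetilde M_c)$ with correction terms the maximal \emph{quotients} not supported on $e,c$, while Lemma~\ref{lem2} gives $s(M')=s(M_e)s(M_c)+s(\overline M_e)s(\overline M_c)$ with maximal \emph{submodules}; the leading terms coincide but the corrections do not, so $\delta_a(M)=s(\widetilde M_e)s(\widetilde M_c)-s(\overline M_e)s(\overline M_c)$, which Remark~\ref{remark01}(b) converts into the stated $s(M_{e^a})s(M_c)+s(M_{c^a})s(M_e)-3\,s(M_e)s(M_c)$ --- generically nonzero. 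A concrete counterexample: let $M$ be the string $e_2\leftarrow e_1\leftarrow a\to c$ (peak $S_a$, legs of lengths $2$ and $1$); then $s(M)=7$, while $M'=(e_2\leftarrow e_1\to a\leftarrow c)$ has $s(M')=8$, so $\delta_a(M)=-1$, consistent with the theorem's formula $4\cdot 2+3\cdot 3-3\cdot 3\cdot 2=-1$ but not with your claimed $0$. The value $\delta_a(M)=0$ in the paper's worked example for a module in $\mathcal C\cap\mathcal E$ is a numerical coincidence and may have misled you. Since your entire verification plan for this region is premised on the false identity, that case of the proof fails as written; the correct route is to combine Lemmas~\ref{lem1} and~\ref{lem2} directly and then apply Remark~\ref{remark01}(b) to each of the four correction terms, handling the bounding rays and the degenerate objects $P_a[1]$, $S_a$ separately via Lemma~\ref{lem04} and Example~\ref{ex-projections}, as the paper does.
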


\begin{proof}
By definition $\delta_a(M) = \rho_{\mathcal{T}} (M) - \rho_{\mathcal{T}'}(M')$, which in turn equals $s(M)-s(M')$.  
Now we consider various cases based on the location of an indecomposable object $M$ in $\mathcal{C}_f$. 
Note that $M$ satisfies exactly one of the four conditions in the definition of $\delta_a(M)$ stated in the theorem. 

\medskip
Suppose $M \in \mathcal{B}\cap \mathcal{C}$, then by definition $M = (\dots b \to a \to c \dots)$.  We also have a short exact sequence 
$$0 \to M_{c^a} \to M \to M_b\to 0 $$
in $\text{mod}\,B$ where $M_{c^a}, M_b$ are projections of $M$ onto the rays $\mathfrak{c}^a, \mathfrak{b}$ respectively.  Note that by Remark~\ref{rm-projections} we know that $M_{c^a} = (a \to c \dots)$ is a submodule of $M$ and $M_b = (\dots b)$ is a quotient of $M$.   

It follows from Section~\ref{ssec:mutating-regions} that $M'$ is obtained from $M$ by removing the support at vertex $a$, 
so $M'= (\dots b \to c \dots)$, and we have the following sequence in $\text{mod}\,B'$  
$$0\to M_c \to M' \to M_b \to 0$$ 
where $M_c = (c \dots)$ is the projection of $M$ onto the ray $\mathfrak{c}$.  Also, note that $M_c, M_b$ are $B$-modules, however they can also be thought of as modules over $B'$ because they are supported on the subquivers of $Q$ that are not affected by mutation at vertex $a$.   
By Lemma~\ref{lem3} we obtain formulas for $s(M)$ and $s(M')$ 

$$s(M) = s(M_b)s(M_{c^a}) - s(\widetilde{M}_b)s(\overline{M}_{c^a})\hspace{1cm} s(M') = s(M_b)s(M_{c}) - s(\widetilde{M}_b)s(\overline{M}_{c})$$
that correspond to the given short exact sequences.  By definition $\overline{M}_{c^a}$ is a maximal submodule of $M_{c^a}$ that is not supported on $a$, therefore $\overline{M}_{c^a}=M_c$.  Combining the two formulas we obtain 

\begin{align*}
s(M)-s(M') &= s(M_b) (s(M_{c^a})-s(M_c)) - s(\widetilde{M}_b)(s(M_c) - s(\overline{M}_c))\\
&= (s(M_b) - s(\widetilde{M}_b)) ( s(M_{c^a})-s(M_c))\\
&= (s(M_{b_a}) - s({M}_b)) ( s(M_{c^a})-s(M_c))\\
&= (s(\pi_1^+(M))-s(\pi_2^+(M)))\,(s(\pi_{1}^-(M))-s(\pi_{2}^-(M))
\end{align*}
where the second equality follows from Remark~\ref{remark01}(b) letting $M = M_{c^a}=(a \to c \dots)$ and $M_y=M_c$.   The third equality follows from Remark~\ref{remark01}(a) letting $M=M_{b_a}=(\dots b \to a)$ and $M_x=M_b$.  This shows that the theorem holds in the case $M \in \mathcal{B} \cap \mathcal{C}$.  Similar argument implies the result in the case $M \in \mathcal{D}\cap \mathcal{E}$. 

\medskip
Suppose $M \in \mathcal{B}\cap \mathcal{E}$, then we know that $M=(\dots e \to b \dots)$.  By projecting $M$ onto $\mathfrak{b}, \mathfrak{e}$ we obtain the modules $M_b, M_e$, that together with $M$ form a short exact sequence 
$$0\to M_b \to M \to M_e \to 0$$
in $\text{mod}\,B$.  Using the results from Section~\ref{ssec:mutating-regions} 
we know that $M'$ is obtained from $M$ by inserting support at vertex $a$, so $M' = (\dots e \to a \to b \dots)$.  Moreover, we obtain a short exact sequence 
$$0\to M'_{b^a} \to M' \to M_e \to 0$$
in $\text{mod}\,B'$, where $M'_{b^a}=(a\to b \dots)$ is a module over $B'$ obtained by projecting $M'$ onto the ray $\mathfrak{b}_a$ in $\text{mod}\,B'$.  By Lemma~\ref{lem3} we have the following formulas for $s(M)$ and $s(M')$ that correspond to the given short exact sequences.  
$$s(M)=s(M_e)s(M_b)-s(\widetilde{M}_e)s(\overline{M}_b) \hspace{1cm} s(M')=s(M_e)s(M'_{b^a})-s(\widetilde{M}_e)s(\overline{M}'_{b^a})$$
Observe that $\overline{M}'_{b^a}\cong M_b$, because it is the maximal submodule of $M'_{b^a}$ that is not supported on $a$.  Note that here we think of $M_b$ as a module over both $B$ and $B'$.  Therefore, we have 

\begin{align*}
s(M')-s(M) &= s(M_e)(s(M'_{b^a})-s(M_b)) - s(\widetilde{M}_e)(s(M_b)-s(\overline{M}_b)\\
&=(s(M_e)-s(\widetilde{M}_e))(s(M'_{b^a})-s(M_b))\\
&=(s(M'_{e_a})-s({M}_e))(s(M'_{b^a})-s(M_b))\\
&=(s(M_{e^a})-2s(M_e))\,(s(M_{b_a})-2 s(M_b))\\
&=(s(\pi_2^+(M))-2s(\pi_1^+(M)))\,(s(\pi_{2}^-(M))-2s(\pi_{1}^-(M)))
\end{align*}
where the second equality follows from Remark~\ref{remark01}(b) with $M=M'_{b^a}$ and $M_y = M_b$.  The third equality follows from Remark~\ref{remark01}(a) with $M = M'_{e_a}=(\dots e \to a) $, the projection of $M'$ onto the ray ${e_a}$ in $B'$, and $M_x = M_e$.  Next we justify the fourth equality.  The modules $M_{e^a}=(\dots e \leftarrow a)$, $M_{b_a}=(a \leftarrow b\to \dots) $ are projections of $M$ onto the rays $\mathfrak{e}^a, \mathfrak{b}_a$ respectively.  By Lemma~\ref{lem04} first setting $M_x = M_e$, $M_{x_y}=M'_{e_a}$, $M_{x^z}= M_{e^a}$ and then setting $M_x = M_b$, $M_{x_y}=M_{b_a}$, $M_{x^z}= M'_{b^a}$,  it follows that 
$$s(M'_{e_a})-s({M}_e)= s(M_{e^a})-2s(M_e) \hspace{2cm} s(M'_{b^a})-s(M_b)=s(M_{b_a})-2 s(M_b)$$
which implies the fourth equality.  This completes the proof of the theorem in the case $M \in \mathcal{B}\cap \mathcal{E}$ while the case $M\in \mathcal{C}\cap \mathcal{D}$ can be shown in a similar way.    

\medskip 
Suppose $M \in \overline{\mathcal{C} \cap \mathcal{E}}$.  First, we assume that $M\in \mathcal{C}\cap \mathcal{E}$ then $M=(\dots e \leftarrow a \to c \dots)$.  Thus there exists a short exact sequence 
$$0\to M_e \oplus M_c \to M \to S_a \to 0$$
in $\text{mod}\,B$ where $M_e, M_c$ are projections of $M$ onto the sectional paths $\mathfrak{e}, \mathfrak{c}$ respectively.  As discussed in section~\ref{ssec:mutating-regions} 
the module $M'$ is obtained from $M$ by reversing the arrows incident with $a$, so $M' = (\dots e \to a \leftarrow c \dots)$.  Therefore, we have a short exact sequence 
$$0\to S_a \to M' \to M_e\oplus M_c\to 0$$
in $\text{mod}\,B'$.  By Lemma~\ref{lem1} and Lemma~\ref{lem2} respectively, we obtain the following formulas for $s(M)$ and $s(M')$.
$$s(M)=s(M_e)s(M_c)+s(\widetilde{M}_e)s(\widetilde{M}_c) \hspace{1cm} s(M')= s(M_e)s(M_c) + s(\overline{M}_e)s(\overline{M}_c)$$
By Remark~\ref{remark01}(b) 
$$s(\widetilde{M}_e)=s(M_{e^a})-s(M_e) \hspace{1cm} s(\widetilde{M}_c) = s(M_{c^a})-s(M_c)$$
and, similarly, by the same remark we have 
$$s(\overline{M}_e) = 2 s(M_e)-s(M_{e^a}) \hspace{1cm} s(\overline{M}_c) = 2 s(M_c)-s(M_{c^a}).$$
Therefore, combining the results above we obtain 

\begin{align*}
s(M)-s(M') =& s(\widetilde{M}_e)s(\widetilde{M}_c)-s(\overline{M}_e)s(\overline{M}_c)\\
=& (s(M_{e^a})-s(M_e))(s(M_{c^a})-s(M_c)) - (2 s(M_e)-s(M_{e^a}))(2 s(M_c)-s(M_{c^a}))\\
=&s(M_{e^a})s(M_c) + s(M_{c^a})s(M_e) - 3 s(M_e) s(M_c)\\
=&s(\pi_{s}^{\downarrow}(M))s(\pi_{p}^{\downarrow}(M)) + s(\pi_{s}^{\uparrow}(M))s(\pi_{p}^{\uparrow}(M)) - 3\,s(\pi_p^{\downarrow}(M))s(\pi_p^{\uparrow}(M)) 
\end{align*}
which yields the desired conclusion.

If $M$ equals $P_a[1]$ or $S_a$ then $M' $ equals  $S_a$ or $P_a[1]$ respectively, and it follows from Example~\ref{ex-projections} that $\delta_a$ is well defined in this case.  Evaluating $s(M)-s(M')$ at $P_a[1], S_a$ respectively we obtain $-1, 1$ and it is easy to see that this agrees with the formula for $\delta_a(M)$ provided in the theorem.     

Now assume that $M \in \mathfrak{e}$ and $M\not= P_a[1]$, then $M = (\dots e)$ and $M' = (\dots e \to a)$ is obtained from $M_e$ by adding support at $a$.  By Lemma~\ref{lem04} we have that 
$$s(M)-s(M) = s(M_{e^a})-2 s(M).$$
On the other hand it follows from Example~\ref{ex-projections} that 
\begin{align*}
\delta_a(M) &= s(\pi_{s}^{\downarrow}(M))s(\pi_{p}^{\downarrow}(M)) + s(\pi_{s}^{\uparrow}(M))s(\pi_{p}^{\uparrow}(M)) - 3\,s(\pi_p^{\downarrow}(M))s(\pi_p^{\uparrow}(M)) \\
&= s(M_{e^a}) + s(M) - 3 s(M) \\
&= s(M) - s(M')
\end{align*}
which shows that the formula holds in this case.  

Now assume that $M \in \mathfrak{c}^a$ and $M\not= S_a$, then $M = (\dots c \leftarrow a )$ and $M' = (\dots c)$ is obtained from $M$ by removing support at $a$.  Note that $M'=M_c$ the projection of $M$ onto the ray $\mathfrak{c}$.  According to the statement of the theorem we have 
\begin{align*}
\delta_a(M) &= s(\pi_{s}^{\downarrow}(M))s(\pi_{p}^{\downarrow}(M)) + s(\pi_{s}^{\uparrow}(M))s(\pi_{p}^{\uparrow}(M)) - 3\,s(\pi_p^{\downarrow}(M))s(\pi_p^{\uparrow}(M)) \\
&= s(S_a) s(M_c) + s(M) s(P_b[1]) - 3 s(M_c) s(P_b[1]) \\
&= 2 s(M') + s(M)- 3 s(M')\\
&= s(M) - s(M')
\end{align*}
which shows that the formula holds in this case.  For $M$ lying on $\mathfrak{c}$ and $\mathfrak{e}^a$ we can apply the same argument as above.  Similar reasoning also shows that the theorem holds when $M \in \overline{\mathcal{B}\cap\mathcal{D}}$.  

Finally, if $M \in \mathcal{F}$ we know that $M$ equals $M'$, hence $\delta_a(M)=0$.  This completes the proof of the theorem. 
\end{proof}

Note, that given a frieze 
and an indecomposable $M$ in one of the six regions $\mathcal X\cap \mathcal Y$, 
it is easy to locate the entries required to compute the frieze difference $\delta_a(M)$. 
We simply need to find projections onto the appropriate rays in the frieze. 
In this way, we do not need to know the precise shape of the modules appearing in the formulas of Theorem~\ref{thm01}.

\begin{ex}
Let $\mathcal{C}_f$ be the category given in Example~\ref{ex:quiver-triangul-flip}.  We consider three possibilities for $M$ below.   

If $M = \begin{smallmatrix} 3\;\;\\8\;1\;9\\\;\;\;\;5\end{smallmatrix}$ then we know by Figure~\ref{fig:frieze-entries} that $s(M)= 12$ and $s(M')= 8$.   On the other hand, we see from Figure~\ref{fig:ARquiver} that $M \in \mathcal{B}\cap\mathcal{C}$.  Theorem~\ref{thm01} implies that 

\begin{align*}
\delta_a(M)=s(M)-s(M')&=(s(M_{b_a})-s(M_b))(s(M_{c^a})-s(M_c))\\
& = (s(\begin{smallmatrix} 3\\8\;1\end{smallmatrix})-s(\begin{smallmatrix} 3\\8\end{smallmatrix}))(s(\begin{smallmatrix}1\;9\\5\end{smallmatrix})-s(\begin{smallmatrix} 9\\5 \end{smallmatrix}))\\
&=(5-3)(5-3)=4.
\end{align*}

Similarly, if $M = \begin{smallmatrix}2\\3\;6\\8\;\;\;\end{smallmatrix}$, then $M \in \mathcal{B}\cap\mathcal{E}$ 
with $s(M)=7$ and $s(M')=9$.  The same theorem implies that

\begin{align*}
\delta_a(M)=s(M)-s(M') &= -(s(M_{e^a})-2s(M_{e}))\,(s(M_{b^a})-2s(M_b))\\
&=-(s(\begin{smallmatrix}1\\2\\6\end{smallmatrix})- 2s(\begin{smallmatrix}2\\6\end{smallmatrix})) 
(s(\begin{smallmatrix}3\\8\; 1\end{smallmatrix})-2s(\begin{smallmatrix}3\\8\end{smallmatrix}))\\
&=-(4-6)(5-6)=-2.
\end{align*}

Finally, if $M=\begin{smallmatrix}\;\;1\;7\\5\;2\\10\;\;\;\;\;\end{smallmatrix}$, then $M \in \mathcal{C}\cap\mathcal{E}$.  We also know that $s(M)=s(M')=11$.  By the third formula in Theorem~\ref{thm01}, we have 

\begin{align*}
\delta_a(M)=s(M)-s(M')&=s(M_{e^a})s(M_c) + s(M_{c^a})s(M_e) - 3 s(M_e) s(M_c)\\
&= s(\begin{smallmatrix}7\;1\\2\end{smallmatrix})s(\begin{smallmatrix}5\\10\end{smallmatrix}) + s(\begin{smallmatrix}1\\5\\10\end{smallmatrix}) s(\begin{smallmatrix}7\\2\end{smallmatrix}) -3 s(\begin{smallmatrix}7\\2\end{smallmatrix}) s(\begin{smallmatrix}5\\10\end{smallmatrix}) \\
&= 5\cdot 3 + 4\cdot 3 - 3 \cdot 3 \cdot 3 = 0.
\end{align*}
\end{ex}


\section*{Acknowledgements}
All authors thank BIRS at Banff where the work on this project began during the week of WINART.\\
K.B. acknowledges support from the Austrian Science Fund (projects DK-W1230, P 25141 and P25647), 
S.G. acknowledges support from the Swiss National Science Foundation, K.S. 
acknowledges support from the National Science Foundation Postdoctoral Fellowship MSPRF-1502881.

\bibliographystyle{alpha}
\bibliography{biblioEF}

\end{document}